\newcommand{\lyxmathsym}[1]{\ifmmode\begingroup\def\b@ld{bold}
	\text{\ifx\math@version\b@ld\bfseries\fi#1}\endgroup\else#1\fi}
\numberwithin{equation}{section}
\numberwithin{figure}{section}
\theoremstyle{plain}
\newtheorem{thm}{\protect\theoremname}[section]
\theoremstyle{definition}
\newtheorem{defn}[thm]{\protect\definitionname}
\theoremstyle{plain}
\newtheorem{lem}[thm]{\protect\lemmaname}
\theoremstyle{remark}
\newtheorem{rem}[thm]{\protect\remarkname}
\theoremstyle{remark}
\newtheorem*{rem*}{\protect\remarkname}
\theoremstyle{plain}
\providecommand{\corollaryname}{Corollary}
\providecommand{\definitionname}{Definition}
\providecommand{\lemmaname}{Lemma}
\providecommand{\remarkname}{Remark}
\providecommand{\theoremname}{Theorem}
\begin{document}
	
	\title[Multisolitons]{Multisolitons for the Defocusing Energy Critical Wave Equation with
		Potentials}
	
	\author{Gong Chen}
	
	\thanks{This work is part of the author\textquoteright s Ph.D. thesis at
		the University of Chicago.}
	
	\date{\today}
	
	\urladdr{http://www.math.uchicago.edu/\textasciitilde{}gc/}
	
	\email{gc@math.uchicago.edu}
	
	\address{Department of Mathematics, The University of Chicago, 5734 South
		University Avenue, Chicago, IL 60637, U.S.A.}
\begin{abstract}
We construct multisoliton solutions to the defocusing energy critical
wave equation with potentials in $\mathbb{R}^{3}$ based on regular
and reversed Strichartz estimates developed in \cite{GC3} for wave
equations with charge transfer Hamiltonians. We also show the
asymptotic stability of multisoliton solutions.  The multisoliton structures with both stable and unstable solitons are covered. Since each soliton
decays slowly with rate $\frac{1}{\left\langle x\right\rangle }$, the interactions among the solitons are strong.  Some reversed Strichartz estimates and local decay estimates for the charge transfer model are established to handle strong interactions. 

\end{abstract}

\maketitle
\tableofcontents{}
\section{Introduction}

In this paper, we consider multisoliton structures to the defocusing
energy critical wave equation with potentials in $\mathbb{R}^{3}$:
\begin{equation}
\partial_{tt}u-\Delta u+\sum_{j=1}^{m}V_{j}\left(x-\vec{v}_{j}t\right)u+u^{5}=0,\label{eq:maineq}
\end{equation}
where $V_{j}(x)$'s are rapidly decaying smooth potentials and $\left\{ \vec{v}_{j}\right\} $
is a set of distinct constant velocities such that 
\begin{equation}
\left|\vec{v}_{i}\right|<1,\,1\leq i\leq m.
\end{equation}
Based on both regular and reversed Strichartz estimates developed
in Chen \cite{GC3} for wave equations with charge transfer Hamiltonians,
we construct  purely multi-soliton solutions and establish the asymptotic
stability of the multisoliton solutions. 

\smallskip

To the author's knowledge, this model is the first one to produce multisoliton structures for wave equations
in $\mathbb{R}^{3}$. Unlike Klein-Gordon equations and wave equations
in higher dimensions, see C\^ote-Mu\~noz \cite{CM}, C\^ote-Martel \cite{CM1},
Jendrej \cite{JJ1,JJ2}, Martel-Merle \cite{MM}, in our case, the
static solutions to the associated elliptic equations decay slowly
like $\left\langle x\right\rangle ^{-1}$. It is of crucial importance
to understand the multisoliton structure in order to establish the
soliton resolution. In fact, if we remove the potentials and replace
the positive sign in front of the nonlinearity by the negative sign,
the equation becomes the well-known focusing energy critical wave
equation. Duyckaerts, Jia, Kenig and Merle establish the soliton resolution
(along a well-chosen time sequence) in \cite{DKM,DJKM}. But to construct
the multisoliton in this case is open. For higher dimensions cases,
Martel and Merle construct the multisoliton in dimension higher than
$5$ by the energy method in \cite{MM}. They point out that the
slow decay of the ground state is the obstruction to obtain a multisoliton
in $\mathbb{R}^{3}$. Although the structure of our model is different
from the pure-power nonlinear equation, the construction in this paper
illustrates that we can overcome the slow decay. But the zero eigenfunctions
and resonances for the linearized operator from the pure-power nonlinear
equation near each soliton will be the challenge for the linear theory. Another interesting point is that unlike the constructions in C\^ote-Mu\~noz \cite{CM}, C\^ote-Martel \cite{CM1}, Jendrej \cite{JJ1,JJ2}, Martel-Merle \cite{MM} which choose the initial data based on the Brouwer's fixed point theorem, in this paper, we construct the initial data for the unstable soliton case based on the Banach's fixed point theorem.

\smallskip

Returning to our model, the intuition is that for each potential, it will
trap some profile provided that $V_{j}$ has large negative part.
With the defocusing structure, the potentials and the nonlinearity
will produce stable solitons. They can also form excited solitons that
is the excited states to associated elliptic equations. In this paper,
we will construct the multisoliton structures with stable solitons and unstable solitons. Notice that one needs more delicate analysis in order to handle the unstable solitons in Section \ref{sec:Construction}.

\smallskip

Throughout the paper, we assume that
\begin{equation}
\left|V_{j}(x)\right|\lesssim\frac{1}{\left\langle 1+\left|x\right|\right\rangle ^{\beta}},\,\,\beta>3.\label{eq:decay}
\end{equation}
Before we formulate the main theorems, we recall Lorentz transformations
along the $x_{1}$ axis since one can deform a rotation to reduce
the general cases to this specific one. More precisely, for the moving frame, $(x-\vec{v}t)$, there is a unique rotation
$\rho_{v}$ so that after rotating, in the new frame $\text{\ensuremath{\left(z_{1},z_{2},z_{3}\right)}}$,
the vector $\vec{v}$ is along $z_{1}$, i.e. the moving frame becomes
$\left(z_{1}-\left|\vec{v}\right|\vec{e}_{1}t,z_{2},z_{3}\right).$ Then
we apply the Lorentz transformation along $\vec{e}_{1}$ with velocity
$v$. Define
\begin{equation}
\Lambda_{v}:=\left(\begin{array}{cccc}
\gamma & -\left|\vec{v}\right|\gamma & 0 & 0\\
-\left|\vec{v}\right|\gamma & \left|\vec{v}\right|\gamma & 0 & 0\\
0 & 0 & 1 & 0\\
0 & 0 & 0 & 1
\end{array}\right)\label{eq:Lmatrix}
\end{equation}
where 
\begin{equation}
\gamma=\frac{1}{\sqrt{1-\left|\vec{v}\right|^{2}}}.\label{eq:gamma}
\end{equation}
Then we consider the the following change of variables:
\begin{equation}
\left(\begin{array}{c}
t'\\
x_{1}'\\
x_{2}'\\
x_{3}'
\end{array}\right)=\Lambda_{v}\left(\begin{array}{cccc}
1 & 0 & 0 & 0\\
0\\
0 &  & \rho_{v}\\
0
\end{array}\right)\left(\begin{array}{c}
t\\
x_{1}\\
x_{2}\\
x_{3}
\end{array}\right).\label{eq:coord}
\end{equation}
In our model, applying the above  transformation, in the new frame $\left(x',t'\right),$ the moving potential $V_{j}\left(x-\vec{v}_{j}t\right)$
becomes 
\begin{equation}
V_{j}\left(\sqrt{1-\left|\vec{v}_{j}\right|^{2}}x_{1}',x_{2}',x_{3}'\right)=:V_{j}\left(m_{v_{j}}x'\right)\label{eq:potential}
\end{equation}
Setting
\begin{equation}
V_{j}^{v_{j}}(x')=V_{j}\left(m_{v_{j}}x'\right),\label{eq:Dpotential}
\end{equation}
then we consider the Schr\"odinger operator 
\begin{equation}
-\Delta+V_{j}^{v_{j}}(x)\label{eq:schop}
\end{equation}
 Let $W_{j}^{v_{j}}$ be the stable static state to
\begin{equation}
-\Delta W_{j}^{v_{j}}+V_{j}^{v_{j}}\left(x\right)W_{j}^{v_{j}}+\left(W_{j}^{v_{j}}\right)^{5}=0.\label{eq:ellipj}
\end{equation}
Note that
\[
W_{j}^{v_{j}}\left(x'\right)=W_{j}^{v_{j}}\left(m_{v_{j}}^{-1}\rho_{v_{j}}\left(x-v_{j}t\right)\right).
\]
By a stable state, we mean that the linearized operator
\begin{equation}
-\Delta+V_{j}^{v_{j}}\left(x\right)+5\left(W_{j}^{v_{j}}\right)^{4}\label{eq:Lj}
\end{equation}
has no eigenvalues nor zero resonance. For detailed definitions, see
Section \ref{sec:Preliminaries} and the Appendix on the linear theory.

Set
\begin{equation}
W_{j}(x)=W_{j}^{v_{j}}\left(m_{v_{j}}^{-1}\rho_{v_{j}}\left(x\right)\right).\label{eq:bubble}
\end{equation}
It is crucial to notice that 
\begin{equation}
\left|W_{j}(x)\right|\simeq\frac{1}{\left\langle x\right\rangle }\label{eq:bdecay}
\end{equation}
which causes the interactions among different solitons in our construction
are very strong. For more detailed discussions on the existence and
decay estimates, see Section \ref{sec:Preliminaries}.

\smallskip

We also need the Hamiltonian structure of wave equations to discuss
scattering. In general, we can write a general wave equation as
\begin{equation}
\partial_{tt}u-\Delta u=F(u,t)
\end{equation}
with initial data
\begin{equation}
u(x,0)=f(x),\,u_{t}(x,0)=g(x).
\end{equation}
Also consider the homogeneous free wave equation,
\begin{equation}
\partial_{tt}u_{0}-\Delta u_{0}=0
\end{equation}
with initial data
\begin{equation}
u_{0}(x,0)=f_{0}(x),\,\left(u_{0}\right)_{t}(x,0)=g_{0}(x).
\end{equation}
We reformulate the wave equation as a Hamiltonian system, 
\begin{equation}
\partial_{t}\left(\begin{array}{c}
u\\
\partial_{t}u
\end{array}\right)-\left(\begin{array}{cc}
0 & 1\\
-1 & 0
\end{array}\right)\left(\begin{array}{cc}
-\Delta & 0\\
0 & 1
\end{array}\right)\left(\begin{array}{c}
u\\
\partial_{t}u
\end{array}\right)=\left(\begin{array}{c}
0\\
F(u)
\end{array}\right).
\end{equation}
Setting 
\begin{equation}
U:=\left(\begin{array}{c}
u\\
\partial_{t}u
\end{array}\right),\,J:=\left(\begin{array}{cc}
0 & 1\\
-1 & 0
\end{array}\right),\,H_{F}:=\left(\begin{array}{cc}
-\Delta & 0\\
0 & 1
\end{array}\right)\,\text{and }F(U):=\left(\begin{array}{c}
0\\
F(u,t)
\end{array}\right),\label{eq:bigU}
\end{equation}
we can rewrite the free wave equation as 
\begin{equation}
\dot{U}_{0}-JH_{F}U_{0}=0,
\end{equation}
\begin{equation}
U_{0}[0]=\left(\begin{array}{c}
f_{0}\\
g_{0}
\end{array}\right)
\end{equation}
and the nonlinear wave equation as 
\begin{equation}
\dot{U}-JH_{F}U=F(U),
\end{equation}
\begin{equation}
U[0]=\left(\begin{array}{c}
f\\
g
\end{array}\right).
\end{equation}
The solution of the free wave equation is given by 
\begin{equation}
U_{0}=e^{tJH_{F}}U_{0}[0].
\end{equation}
In the following, we write 
\begin{equation}
U[t]=\left(u,u_{t}\right)^{t},\,W[t]=\left(\sum_{j=1}^{m}W_{j}\left(x-\vec{v}_{j}t\right),\,\partial_{t}\sum_{j=1}^{m}W_{j}\left(x-\vec{v}_{j}t\right)\right)^{t}.
\end{equation}
With the preparations and notations above, we can formulate our main
theorems with stable solitons:
\begin{thm}[Existence of purely multi-soliton solutions]
\label{thm:existence}
In $\mathbb{R}^3$, there exists a solution $u$ to 
\begin{equation}
\partial_{tt}u-\Delta u+\sum_{j=1}^{m}V_{j}\left(x-\vec{v}_{j}t\right)u+u^{5}=0,
\end{equation}
such that 
\begin{equation}
\lim_{t\rightarrow\infty}\left\Vert U[t]-W[t]\right\Vert _{\dot{H}^{1}\times L^{2}} = 0.
\end{equation}
Moreover, we have the decay rate 
\begin{equation}
\left\Vert U[t]-W[t]\right\Vert _{\dot{H}^{1}\times L^{2}}\lesssim\frac{1}{\sqrt{t}}.
\end{equation}
\textup{as $t\rightarrow\infty$.}
\end{thm}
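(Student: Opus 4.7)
The plan is to construct $u$ by a backward-in-time approximation scheme in the spirit of Merle and Martel--Merle, but replacing their energy method (which fails here since $W_j\notin L^2(\mathbb{R}^3)$) with a dispersive approach built on the regular and reversed Strichartz estimates of \cite{GC3} for the charge transfer Hamiltonian. Write $u=W+\varepsilon$ with $W(x,t)=\sum_j W_j(x-\vec v_j t)$. Because each Lorentz-translated bubble $u_j(x,t):=W_j(x-\vec v_j t)$ solves the single-potential problem $\partial_{tt}u_j-\Delta u_j+V_j(x-\vec v_j t)u_j+u_j^5=0$, a direct computation yields
\begin{equation*}
\partial_{tt}\varepsilon-\Delta\varepsilon+V(x,t)\,\varepsilon=-R(x,t)-\mathcal{N}(\varepsilon,W),
\end{equation*}
with charge transfer potential $V(x,t)=\sum_j V_j(x-\vec v_j t)$, interaction source
\begin{equation*}
R=\Bigl(\sum_j u_j\Bigr)^{\!5}-\sum_j u_j^5 + \sum_{j\neq k}V_j(x-\vec v_j t)\,u_k,
\end{equation*}
and $\mathcal{N}(\varepsilon,W)=(W+\varepsilon)^5-W^5-R$ collecting all $\varepsilon$-dependent nonlinear contributions (in particular the dangerous linear piece $5W^4\varepsilon$).

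Fix $T_n\to\infty$ and, for each $n$, seek $\varepsilon_n$ as the solution of this perturbation equation on $[T_0,T_n]$ with terminal data $\varepsilon_n[T_n]=0$, written by Duhamel's formula using the charge transfer wave propagator $e^{(t-s)JH_V}$. Run a Banach fixed point in a weighted space of the form
\begin{equation*}
\|\varepsilon\|_{X_t}=\sup_{s\ge t}s^{1/2}\Bigl(\|\varepsilon[s]\|_{\dot H^1\times L^2}+\|\varepsilon\|_{S([s,\infty))}+\|\varepsilon\|_{\widetilde S([s,\infty))}\Bigr),
\end{equation*}
where $S$ is a standard space-time Strichartz norm and $\widetilde S$ a reversed Strichartz norm (of type $L^\infty_x L^2_t$) on tubes aligned with the soliton trajectories. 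The regular and reversed Strichartz estimates together with the local energy decay for the charge transfer Hamiltonian from \cite{GC3} furnish the linear control of $e^{(t-s)JH_V}$; a term-by-term analysis of $R$ and $\mathcal{N}$, using the smallness of $\varepsilon$ in $X_{T_0}$ and the slow bubble decay, then makes the Duhamel map a contraction in a ball of $X_{T_0}$ for $T_0$ sufficiently large. The resulting uniform bound $\|\varepsilon_n\|_{X_{T_0}}\lesssim 1$, combined with weak compactness in the energy space, produces a limiting $\varepsilon$ that solves the nonlinear problem globally backward in time and satisfies $\|U[t]-W[t]\|_{\dot H^1\times L^2}\lesssim t^{-1/2}$.

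The hard part is the slow spatial decay $|W_j|\sim\langle x\rangle^{-1}$: the dominant cross terms of $R$, schematically $u_j^4 u_k$ and $V_j u_k$ with $j\neq k$, are supported near the soliton cores and decay only like $t^{-1}$ in $L^2_x$, just at the borderline of $L^1_t$-integrability, so the standard Strichartz bound $\|\varepsilon\|_{L^\infty_t\dot H^1}\lesssim\|F\|_{L^1_t L^2_x}$ does not close. The key is to exploit the spatial concentration of $R$ on the separating soliton trajectories: measured in the reversed Strichartz norm $L^\infty_x L^2_t$ on tubes whose transverse support shrinks as the solitons drift apart, the source becomes $t$-integrable, and this is what yields the $t^{-1/2}$ gain. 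The term $5W^4\varepsilon$, equally localized on soliton cores, is absorbed by the same combination of reversed Strichartz and local energy decay, which in turn rests on the stability hypothesis that each $-\Delta+V_j^{v_j}+5(W_j^{v_j})^4$ has no eigenvalue and no zero resonance.
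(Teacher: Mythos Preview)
Your proposal contains the right ingredients but mis-assembles them at a crucial point: you take $V(x,t)=\sum_j V_j(x-\vec v_j t)$ as the charge transfer potential and push the linear piece $5W^4\varepsilon$ into the source $\mathcal N$. This cannot close a contraction. The dispersive estimates of \cite{GC3} for $e^{tJH_V}$ require that each $-\Delta+V_j(S(\vec v_j)\cdot)$ have no eigenvalue and no zero resonance, but that is \emph{not} what you are assuming---the solitons exist precisely because $V_j$ has a large negative part, so $-\Delta+V_j$ typically has bound states. More fundamentally, even if those estimates were available, $5W^4\varepsilon$ is a linear term whose coefficient $5W^4\sim\sum_j\langle x-\vec v_j t\rangle^{-4}$ is $O(1)$ near each core and does not become small on $[T_0,\infty)$; in any of your source norms one only gets $\|5W_j^4\varepsilon\|\lesssim C\|\varepsilon\|$ with a fixed constant $C$, and the map is not a contraction. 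Your last sentence correctly names the relevant spectral hypothesis on $-\Delta+V_j^{v_j}+5(W_j^{v_j})^4$, but the way to \emph{use} it is to put $5W_j^4$ inside the charge transfer Hamiltonian: take $\mathrm V_j:=V_j+5W_j^4$ as the moving potentials, so that the propagator in Duhamel is precisely the one governed by the stable linearized operators. Then only the genuine cross pieces $a(x,t)h=20W_1^3W_2 h+30W_1^2W_2^2 h+\cdots$ remain on the source side, and these \emph{do} vanish as $T_0\to\infty$, so the contraction closes. This is exactly the decomposition the paper uses.

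A secondary point: once the linearization is set up this way, the $T_n\to\infty$ approximation followed by weak compactness is unnecessary in the stable case. Because the charge transfer evolution with potentials $\mathrm V_j$ has no bound states, one can run the Banach fixed point directly in the backward Duhamel equation $h(t)=\int_t^\infty U(t,s)\mathrm F(h,\cdot,s)\,ds$ on $[t_1,\infty)$, and the solution with the stated $t^{-1/2}$ decay is obtained in one step. The paper reserves the $T_n$-and-weak-limit route for the unstable-soliton construction, where bound-state projections genuinely obstruct the direct construction from infinity.
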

Next we have the asymptotic stability of the multisoliton structure.
\begin{thm}[Asymptotic stability of the multisoliton]
\label{thm:Stability} Suppose that $0<\epsilon\ll1$ is
small enough and $1\ll t_{0}$ is large enough. Let $u$ solve
\begin{equation}
\partial_{tt}u-\Delta u+\sum_{j=1}^{m}V_{j}\left(x-\vec{v}_{j}t\right)u+u^{5}=0,\,\,\,x\in\mathbb{R}^{3}.
\end{equation}
Suppose at $t=t_{0}$,
\begin{equation}
\left\Vert U[t_{0}]-W[t_{0}]\right\Vert _{\dot{H}^{1}\times L^{2}}\leq\epsilon.
\end{equation}
Then there exists free data
\begin{equation}
U_{0}[0]=\left(f_{0},g_{0}\right)^{t}\in\dot{H}^{1}\times L^{2}
\end{equation}
such that
\begin{equation}
\left\Vert U[t]-W[t]-e^{tJH_{F}}U_{0}[0]\right\Vert _{\dot{H}^{1}\times L^{2}}\rightarrow0.\,\,t\rightarrow\infty.
\end{equation}
In other words, the error $u(t)-\sum_{j=1}^{m}W_{j}\left(x-\vec{v}_{j}t\right)$\textup{
}scatters to a free wave.
\end{thm}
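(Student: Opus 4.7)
The plan is to set $\eta(t,x) := u(t,x) - W(t,x)$ with $W(t,x) = \sum_{j=1}^m W_j(x-\vec{v}_j t)$ and work with the perturbation equation satisfied by $\eta$. Each bubble $W_j(x-\vec{v}_j t)$ solves $\partial_{tt}W_j-\Delta W_j+V_j(x-\vec{v}_j t)W_j+W_j^5=0$, so subtracting this family of identities from the equation for $u$ yields
\begin{equation*}
\partial_{tt}\eta-\Delta\eta+\sum_{j=1}^m V_j(x-\vec{v}_j t)\,\eta = -\sum_{j\ne k}V_j(x-\vec{v}_j t)W_k(x-\vec{v}_k t)-\Bigl[(W+\eta)^5-\sum_{j=1}^m W_j^5\Bigr].
\end{equation*}
The left-hand side is precisely a wave equation with the charge transfer Hamiltonian studied in \cite{GC3}, so its regular and reversed Strichartz estimates are available; the right-hand side is to be viewed as a forcing term made of an interaction piece and a semilinear piece.

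Next I would run a bootstrap on $[t_0,\infty)$ for a family of Strichartz and local-decay norms of $\eta$. The smallness $\|\eta[t_0]\|_{\dot H^1\times L^2}\le\varepsilon$ provides the starting point. For the interaction term, the two centers $\vec{v}_j t$ and $\vec{v}_k t$ with $j\ne k$ separate linearly in $t$, so with $\beta>3$ in \eqref{eq:decay} and the soliton decay \eqref{eq:bdecay} the product $V_j(x-\vec{v}_j t)W_k(x-\vec{v}_k t)$ is integrable in $L^1_t L^2_x([t_0,\infty);dt\,dx)$ with tail bound $\lesssim t_0^{-\alpha}$ for some $\alpha>0$; this is absorbed by an energy estimate. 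The semilinear nonlinearity is expanded as
\begin{equation*}
(W+\eta)^5-\sum_j W_j^5 = \Bigl(W^5-\sum_j W_j^5\Bigr)+5W^4\eta+O(W^3\eta^2)+\eta^5 ,
\end{equation*}
whose first term is again controlled by $L^1_t L^2_x$-integrability coming from the soliton separation, the linear-in-$\eta$ term is handled via the reversed Strichartz and local-decay estimates from \cite{GC3} because $W^4\simeq\langle x-\vec{v}_j t\rangle^{-4}$ is localized near moving centers, and the genuinely nonlinear higher-order terms close at the energy-critical level once $\eta$ has small Strichartz norm.

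With $\eta$ controlled in all relevant norms, I would read off scattering by switching from the charge transfer propagator to the free propagator $e^{sJH_F}$. Writing
\begin{equation*}
\eta[t]=e^{(t-t_0)JH_F}\eta[t_0]+\int_{t_0}^{t}e^{(t-s)JH_F}\bigl(0,\,G(s)\bigr)^{t}\,ds,
\end{equation*}
where $G(s)=-\sum_j V_j(x-\vec{v}_j s)\eta-\sum_{j\ne k}V_jW_k-[(W+\eta)^5-\sum_j W_j^5]$, the same integrability and local-decay bounds used in the bootstrap show that the tail integrals $\int_t^{\infty}e^{-sJH_F}(0,G(s))^t\,ds$ form a Cauchy family in $\dot H^1\times L^2$, so the limit
\begin{equation*}
U_0[0]:=\eta[t_0]-e^{-t_0 JH_F}\int_{t_0}^{\infty}e^{-sJH_F}\bigl(0,G(s)\bigr)^{t}\,ds
\end{equation*}
exists in $\dot H^1\times L^2$ and gives the asserted free datum.

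The main obstacle is the slow $\langle x\rangle^{-1}$ decay of the bubbles: the term $V_j\eta$ in $G$ and the linear potential $5W^4\eta$ in the semilinear expansion are at the threshold of what standard Strichartz estimates can absorb, and neither is handled by simple Hölder pairing in $L^p_tL^q_x$. It is precisely here that the reversed Strichartz estimates and local decay for the charge transfer Hamiltonian from \cite{GC3} are indispensable: they provide $L^2_xL^\infty_t$-type control on $\eta$ along every soliton channel, which is exactly what one needs to pair against the strongly localized but slowly decaying potentials $V_j(x-\vec{v}_jt)$ and $W^4_j(x-\vec{v}_jt)$ to produce summable contributions in $t$.
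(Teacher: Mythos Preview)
There are two concrete gaps. First, your claim that $V_j(x-\vec{v}_j t)W_k(x-\vec{v}_k t)\in L^1_tL^2_x$ is false. Near the center of $V_j$ one has $|W_k(x-\vec{v}_k t)|\simeq\langle\vec{v}_jt-\vec{v}_kt\rangle^{-1}\simeq t^{-1}$, so $\|V_jW_k(t)\|_{L^2_x}\simeq t^{-1}\|V_j\|_{L^2_x}$, which is \emph{not} integrable in $t$. The same failure occurs for the cross terms $W_j^4W_k$ in $W^5-\sum_jW_j^5$, since $W_j^4\simeq\langle x-\vec{v}_jt\rangle^{-4}$ decays no faster than $V_j$. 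The paper singles out exactly these terms as $F_1,F_2\notin L^1_tL^2_x$ and handles them not by the energy estimate but by Lemma~\ref{lem:energyL} and the $I$-norm (weighted $L^2_{t,x}$ together with reversed norms $L^{3/2,1}_xL^2_t$ and $L^1_{x_1}L^{2,1}_{\widehat{x_1}}L^2_t$); see Remark~\ref{rem:eg} for the explicit computation showing $\|\langle x\rangle^{1/2+\epsilon}F_1\|_{L^2_{t,x}}<\infty$ while $\|F_1\|_{L^1_tL^2_x}=\infty$. The slow soliton decay is the obstruction for the pure interaction terms, not just for the $\eta$-linear ones you flag at the end.

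Second, your choice of linear operator is not the right one. With only $\sum_jV_j(x-\vec{v}_jt)$ on the left, the operators $-\Delta+V_j^{v_j}$ have negative eigenvalues (this is precisely the hypothesis that produces the solitons, cf.\ Section~\ref{sec:Preliminaries}), so the Strichartz estimates of \cite{GC3} apply only to scattering states in the sense of Definition~\ref{AO}, a condition you do not verify for $\eta$. More fatally, treating $5W^4\eta$ as a forcing term places an $O(1)$-linear contribution in $\eta$ on the right-hand side of your bootstrap, which cannot be absorbed. The paper instead takes the linearized potentials $\mathrm{V}_j=V_j+5W_j^4$ as the charge transfer potentials; by the stability assumption these have neither eigenvalues nor resonances, so the full strength of Theorems~\ref{thm:StriCharB}--\ref{thm:EndRStriCB} is available unconditionally, and the iteration then closes by a genuine contraction in the space $S$ of~\eqref{eq:Sspace}.
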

Here we briefly discuss strong interactions among these solitons. For
simplicity, we consider the case when $m=2$ as in Section \ref{sec:Construction}.
Around near two solitons, we define
\begin{equation}
h(t):=u(t)-W_{1}\left(x\right)-W_{2}\left(x-\vec{v}t\right).\label{eq:eqh-1}
\end{equation}
We consider the equation for $h$. Plugging everything in the equation,
we have 
\begin{align}
\partial_{tt}h-\Delta h+h^{5}+\left(V_{1}(x)+5W_{1}^{4}(x)\right)h+\left(V_{2}\left(x-\vec{v}t\right)+5W_{2}^{4}\left(x-\vec{v}t\right)\right)h+a(x,t)h\nonumber \\
=F_{1}(x,t)+F_{2}(x,t)+F(x,t)+N(h,x,t)
\end{align}
with 
\begin{equation}
a(x,t)=20W_{1}^{3}(x)W_{2}\left(x-\vec{v}t\right)+30W_{1}^{2}(x)W_{2}^{2}\left(x-\vec{v}t\right)+20W_{1}(x)W_{2}^{3}\left(x-\vec{v}t\right)
\end{equation}
\begin{equation}
F_{1}(x,t)=5W_{1}^{4}(x)W_{2}\left(x-\vec{v}t\right)+V_{1}(x)W_{2}(x-\vec{v}t)
\end{equation}
\begin{equation}
F_{2}(x,t)=5W_{1}(x)W_{2}^{4}\left(x-\vec{v}t\right)+W_{1}(x)V_{2}(x-\vec{v}t)
\end{equation}
\begin{equation}
F(x,t)=10W_{1}^{3}(x)W_{2}^{2}\left(x-\vec{v}t\right)+10W_{1}^{2}(x)W_{2}^{3}\left(x-\vec{v}t\right)
\end{equation}
and $N(h,x,t)$ is  quadratic or higher in $h$. For more details, see Section
\ref{sec:Construction}. We notice that $F(x,t)$ is easy to handle
but $F_{1},\,F_{2}\notin L_{t}^{1}L_{x}^{2}$ . One can not simply
apply the energy estimate and Strichartz estimates directly. $F_{1}$
and $F_{2}$ precisely show that due to the slow decay rate of the
solitons, see Section \ref{sec:Preliminaries}, some terms in the nonlinear
interactions decay slowly. To overcome these terms, we need the local
energy decay and reversed Strichartz estimates with inhomogeneous
terms in the reversed norm. Moreover, due to the failure of the endpoint
Strichartz estimate in $\mathbb{R}^{3}$, to handle the quadratic
term of $h$ in the nonlinearity, one also needs the endpoint reversed
Strichartz estimate and revered type local decay estimates. It is also novel in the nonradial setting.

\smallskip

All the above results can be extended to the multisoliton construction with unstable excited solitons, see Section \ref{sec:Construction}. The linear model still plays a pivotal role. It is interesting to compare our method which is based on linear estimates with the constructions of multisolitons by nonlinear techniques developed in for example, in Martel \cite{Mart}, Merle \cite{Merle}, C\^ote-Mu\~noz \cite{CM}, C\^ote-Martel \cite{CM1}, Jendrej \cite{JJ1,JJ2}, Martel-Merle \cite{MM}. First of all, our linear model can be used to analyze the stability of the multisoliton structure. Secondly, the scattering state we construct in this paper is based on the Banach's fixed point theorem other than the Brouwer's fixed-point theorem. On the other hand, when we need to deal with the purely-soliton solution with unstable solitons, we also need the weak convergence technique which is commonly used in the nonlinear method.
\subsection*{Notation}

\textquotedblleft $A:=B\lyxmathsym{\textquotedblright}$ or $\lyxmathsym{\textquotedblleft}B=:A\lyxmathsym{\textquotedblright}$
is the definition of $A$ by means of the expression $B$. We use
the notation $\langle x\rangle=\left(1+|x|^{2}\right)^{\frac{1}{2}}$.
The bracket $\left\langle \cdot,\cdot\right\rangle $ denotes the
distributional pairing and the scalar product in the spaces $L^{2}$,
$L^{2}\times L^{2}$ . For positive quantities $a$ and $b$, we write
$a\lesssim b$ for $a\leq Cb$ where $C$ is some prescribed constant.
Also $a\simeq b$ for $a\lesssim b$ and $b\lesssim a$. 

\subsection*{Organization}

The paper is organized as follows: In Section \ref{sec:Preliminaries},
we list some existence and decay results on the solutions to elliptic
equations. In Section \ref{sec:Construction}, we establish the 
main theorems in this paper. The constructions with unstable solitons
will be shown. Finally, in the Appendix, we briefly recall the
linear theory that we need in this paper based on results from Chen
\cite{GC3,GC2}. We also discuss the scattering behavior of the nonlinear equation.

\subsection*{Acknowledgment}

I feel deeply grateful to my advisor Professor Wilhelm Schlag for
his kind encouragement, discussions, comments and all the support.
I also want to thank Jacek Jendrej for many useful and enlightening
discussions.

\section{Preliminaries: Static States\label{sec:Preliminaries}}

In order to construct the multisoliton solution to the equation 
\begin{equation}
\partial_{tt}u-\Delta u+\sum_{j=1}^{m}V_{j}\left(x-\vec{v}_{j}t\right)u+u^{5}=0,
\end{equation}
we first have to understand the soliton trapped by each potential separately. 

Performing a Lorentz transformation, it suffices to understand the
model elliptic equation:
\begin{equation}
-\Delta u+Vu+u^{5}=0.\label{eq:static}
\end{equation}

\begin{defn}[Stability of a static solution] A solution $W$ to the equation \eqref{eq:static}
is called a stable solution if the linearized opearator 
\begin{equation}
L_{W}=-\Delta+V+5W^{4}\label{eq:L}
\end{equation}
has no eigenvalues nor zero resonance.
\end{defn}
We follow Jia-Liu-Xu \cite{JLX} and Jia-Liu-Schlag-Xu \cite{JLSX}.
Define the energy functional 
\begin{equation}
J(u):=\int_{R^{3}}\frac{|\nabla u|^{2}}{2}+\frac{Vu^{2}}{2}+\frac{u^{6}}{6}(x,t)\,dx.\label{eq:J}
\end{equation}
In general when the negative part $V$ of the potential is large,
one can expect that there is a unique positive ground state, which
is the global minimizer of energy functional and has negative energy.
In addition, there can be a number of \textquotedblleft excited states\textquotedblright{}
with higher energies (see Appendix A of  \cite{JLX} for more details).
It is well known the ground state is asymptotically stable at least
when $V$ decays fast. However the dynamics around the excited states
can be very complicated even in perturbative regime (even with radial
data), involving stable and unstable manifolds. It arises some difficulties
to take these unstable excited states as the solitons in our construction.

Here we list some important results regarding the elliptic equation
from \cite{JLX,JLSX}.
\begin{lem}
Consider $J$ as a functional defined in $\dot{H}^{1}(\text{\ensuremath{\mathbb{R}}}^{3})$.
If the operator $-\Delta-V$ has negative eigenvalues then there exists
a global minimizer $Q>0$ with $J(Q)<0$. If $-\Delta-V$ has no negative
eigenvalues, then the only steady state solution $u\in\dot{H}^{1}(\mathbb{R}^{3})$
to equation \eqref{eq:static} is $u\equiv0$. 
\end{lem}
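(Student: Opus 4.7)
Both assertions admit standard direct-method arguments; I would handle them separately. For the existence part, the first step is to verify coercivity. Since $\beta>3$ gives $V\in L^{3/2}(\mathbb{R}^3)$, H\"older combined with the Sobolev embedding $\dot H^1\hookrightarrow L^6$ and Young's inequality yield
\[
\Bigl|\int Vu^2\,dx\Bigr|\le \|V\|_{L^{3/2}}\|u\|_{L^6}^2 \le \tfrac{1}{6}\|u\|_{L^6}^6 + C,
\]
so $J(u)\ge \tfrac12 \|\nabla u\|_{L^2}^2 - C$; in particular $J$ is bounded below and minimizing sequences are bounded in $\dot H^1$. The spectral hypothesis supplies an eigenfunction $\phi$ for which the quadratic form $\int(|\nabla\phi|^2+V\phi^2)\,dx<0$, and evaluating $J(t\phi)$ for small $t>0$ shows $\inf J<0$.

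Next, take a minimizing sequence $u_n$ and extract a weak subsequential limit $u_n\rightharpoonup Q$ in $\dot H^1$. The gradient term is weakly lower semicontinuous; local Rellich compactness gives $u_n\to Q$ in $L^p_{\mathrm{loc}}$ for $p<6$ and hence almost everywhere along a further subsequence, so Fatou yields $\int Q^6\le \liminf \int u_n^6$; and the potential quadratic form $u\mapsto\int Vu^2$ is weakly continuous because the decay $\beta>3$ allows the splitting $V=V\mathbf{1}_{\{|x|\le R\}}+V\mathbf{1}_{\{|x|>R\}}$, with Rellich handling the first piece and the $L^{3/2}$-smallness of the tail handling the second. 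Passing to the limit yields $J(Q)\le \inf J<0$, so $Q$ is nontrivial and solves \eqref{eq:static} as its Euler--Lagrange equation. Replacing $Q$ by $|Q|$ leaves $J$ unchanged, and elliptic regularity together with the strong maximum principle promotes $|Q|$ to a strictly positive smooth solution.

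For the second assertion, the assumption $-\Delta+V\ge 0$ lets me test the equation against $u$ and integrate,
\[
\int_{\mathbb{R}^3}|\nabla u|^2+Vu^2+u^6\,dx=0;
\]
each piece is nonnegative, so $\|u\|_{L^6}=0$ and $u\equiv 0$. The subtlest step of the whole argument is the weak continuity of the potential quadratic form, since the Sobolev embedding into $L^6$ is not compact on $\mathbb{R}^3$; the polynomial decay $|V(x)|\lesssim\langle x\rangle^{-\beta}$ with $\beta>3$ is exactly what prevents mass from escaping to infinity and makes the variational problem well-posed.
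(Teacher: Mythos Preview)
The paper does not prove this lemma itself; it is quoted as background from \cite{JLX,JLSX}. Your direct-method argument is correct and is essentially what one finds in those references: coercivity from H\"older--Sobolev--Young, $\inf J<0$ from the eigenfunction, weak lower semicontinuity of the gradient and $L^6$ terms, and weak continuity of $u\mapsto\int Vu^2$ from the decay of $V$ combined with local Rellich compactness; the positivity via $Q\mapsto|Q|$ together with the strong maximum principle, and the testing argument for the second assertion, are standard and sound. One cosmetic remark: the lemma as printed writes $-\Delta-V$, which appears to be a sign slip relative to the functional $J$ and equation \eqref{eq:static}; you have correctly interpreted the hypothesis as concerning the sign of $-\Delta+V$.
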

\begin{thm}
Fix $\beta>2$. Define 
\begin{equation}
Y:=\{V\in C(\mathbb{R}^{3}):\,V\,\,{\rm is\,\,radial\,\,and\,\,}\sup_{x}(1+|x|)^{\beta}|V(x)|<\infty\}.
\end{equation}
 For $V$ in a dense open set $\Omega\subset Y$, there are only finitely
many radial steady states to equation \eqref{eq:static}. 
\end{thm}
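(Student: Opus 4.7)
The plan is to reduce to a shooting problem for the radial ODE and then argue genericity in the potential via a transversality/Sard--Smale scheme. Writing $u = u(r)$, equation \eqref{eq:static} becomes
\begin{equation}
u''(r) + \tfrac{2}{r}\, u'(r) = V(r)\, u(r) + u(r)^{5},
\end{equation}
and regularity at the origin forces $u'(0) = 0$. Radial solutions are thus parametrized by $\alpha := u(0) \in \mathbb{R}$, giving a family $u_\alpha(r; V)$ depending continuously on $(\alpha, V) \in \mathbb{R} \times Y$ on its maximal interval of existence. The assumption $\beta > 2$ ensures $V \in L^{3/2}(\mathbb{R}^{3})$ and $V$ decays faster than $r^{-2}$, so a standard asymptotic analysis shows that any globally defined bounded trajectory satisfies $u_\alpha(r) = c(\alpha, V)/r + o(r^{-1})$ at infinity, and these are exactly the radial elements of $\dot{H}^{1}(\mathbb{R}^{3})$ solving \eqref{eq:static}. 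Consequently, the radial steady states are in bijection with the zero set $Z(V) := \{ \alpha : \Phi(\alpha, V) = 0\}$ of the shooting functional $\Phi(\alpha, V) := \lim_{r\to\infty} r\, u_\alpha(r; V)$, defined on the open set of $\alpha$ giving bounded globally defined solutions.

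Next I would establish that $Z(V)$ is a priori bounded. Testing \eqref{eq:static} against $u$ gives
\begin{equation}
\| u\|_{\dot H^{1}}^{2} + \int V u^{2}\, dx + \| u\|_{L^{6}}^{6} = 0,
\end{equation}
whence by H\"older, Sobolev, and $V \in L^{3/2}$ one gets $\| u\|_{L^{6}} \lesssim 1$. A standard elliptic bootstrap on $-\Delta u = -Vu - u^{5}$ lifts this to an $L^{\infty}$ bound, which in particular controls $|\alpha| = |u(0)| \leq M(V)$. Since $\Phi(\cdot, V)$ is $C^{1}$ on its domain, finiteness of $Z(V)$ will follow once every zero is shown to be isolated.

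For isolation and genericity, define $\Omega \subset Y$ as those $V$ for which every $\alpha \in Z(V)$ is a \emph{simple} zero of $\Phi(\cdot, V)$; equivalently, the linearized operator $L_{u_\alpha} = -\Delta + V + 5 u_\alpha^{4}$ admits no radial zero resonance. Openness of $\Omega$ is standard from the implicit function theorem applied at each simple zero together with the a priori bound. For density, given $V_{0} \in Y$ with a degenerate zero $\alpha_{0}$, I would perturb $V_{0} \mapsto V_{0} + \epsilon W$ with $W$ radial of small compact support. Differentiating the ODE in $\epsilon$ shows that the first variation of $\Phi$ at $\alpha_{0}$ is, up to a nonzero prefactor, the pairing
\begin{equation}
\int_{0}^{\infty} W(r)\, u_{\alpha_{0}}(r)\, \psi(r)\, r^{2}\, dr,
\end{equation}
where $\psi$ is a bounded solution of the formal adjoint linearized equation; choosing $W$ nonorthogonal to $u_{\alpha_{0}} \psi$ removes the degeneracy.

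The main obstacle is carrying out this density argument \emph{simultaneously} across all degenerate zeros, since $Z(V_{0})$ might a priori accumulate and destroying one degeneracy could create another. Here the bound $Z(V) \subset [-M(V), M(V)]$ is essential: it confines everything to a compact parameter interval, on which a Baire/Sard--Smale-type argument exploiting the separability of $Y$ produces a countable intersection of open dense subsets on which all zeros of $\Phi$ are simple. This yields the asserted dense open $\Omega \subset Y$, and combined with the a priori bound and isolation of zeros gives finiteness of $Z(V)$ for $V \in \Omega$.
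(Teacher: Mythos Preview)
The paper does not actually prove this theorem: it is quoted as a preliminary result from \cite{JLX,JLSX}, so there is no in-paper proof to compare against. That said, your shooting-plus-transversality outline is in the right spirit and is close to how such results are typically obtained; note that the theorem immediately following in the paper (existence and uniqueness of the radial solution $u_c$ with $u_c \sim c/|x|$ at infinity) indicates that \cite{JLX} parametrizes by the asymptotic constant $c$ and shoots inward, rather than by $\alpha = u(0)$ and shooting outward as you do. The two are dual and essentially equivalent.

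There is, however, a genuine gap in your final paragraph. You invoke a ``Baire/Sard--Smale-type argument'' to produce a \emph{countable intersection of open dense subsets} of $Y$ on which all zeros are simple, and then assert this yields the ``dense open $\Omega$.'' A countable intersection of open dense sets is residual (dense $G_\delta$), not open; Sard--Smale gives you genericity, not openness. You do argue openness separately via the implicit function theorem and the a~priori bound, and that argument is the one doing the work --- but as written, the logic of the last paragraph is circular: openness of $\Omega$ is what allows you to avoid the ``simultaneous'' difficulty you worry about (for $V_0 \in \Omega$ the zeros are already finite and isolated), yet you invoke the Baire argument precisely to handle $V_0 \notin \Omega$, where $Z(V_0)$ could in principle accumulate. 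To close this, you should exploit that $\Phi(\cdot, V)$ is real-analytic in $\alpha$ on each component of its domain (the ODE is analytic), so zeros cannot accumulate unless $\Phi$ vanishes identically on a component; ruling that out, $Z(V_0)$ is always finite, and the density argument reduces to perturbing finitely many degenerate zeros. With that fix the outline is sound.
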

\begin{thm}

Let $V\in Y$. For any $c\in\mathbb{R}$, there exists a unique radial
solution $u_{c}\in\dot{H}^{1}(B_{r}^{c})$ for any $r>0$ to 
\begin{equation}
-\Delta u+V(x)u+u^{5}=0,\,\,{\rm in}\,\,R^{3}\backslash\{0\},\label{eq:elliptic}
\end{equation}
 with 
\begin{equation}
\left|u(x)-\frac{c}{|x|}\right|=o(\frac{1}{|x|}),\,\,{\rm as}\,\,|x|\to\infty.
\end{equation}
 If $u_{c}\in\dot{H}^{1}(\mathbb{R}^{3})$, then $u_{c}\in C^{1}(\mathbb{R}^{3})$
and
\begin{equation}
-\Delta u_{c}+Vu_{c}+u_{c}^{5}=0\,\,{\rm in}\,\,R^{3}.
\end{equation}
\end{thm}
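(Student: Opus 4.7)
The plan is to reduce the radial PDE to a scalar ODE via the substitution $w(r) := r\,u(r)$ with $r = |x|$, and then run a contraction mapping argument at infinity.

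A direct computation shows that for a radial function $u(r)$ one has $u'' + \tfrac{2}{r}u' = \tfrac{1}{r}w''$, so that the equation $-\Delta u + Vu + u^{5} = 0$ on $(0,\infty)$ is equivalent to
\begin{equation}
w''(r) \;=\; V(r)\,w(r) + \frac{w(r)^{5}}{r^{4}}, \qquad r \in (0,\infty),
\end{equation}
while the prescribed asymptotic $u(x) - c/|x| = o(|x|^{-1})$ becomes simply $w(r)\to c$ as $r\to\infty$. Integrating twice from $r$ to $\infty$ (and imposing $w'(\infty)=0$, which the target class forces) yields the integral equation
\begin{equation}
w(r) \;=\; c \;+\; \int_{r}^{\infty}(\tau-r)\Bigl[V(\tau)\,w(\tau) + \tfrac{w(\tau)^{5}}{\tau^{4}}\Bigr]\,d\tau \;=:\; (Tw)(r).
\end{equation}
First I would set up a contraction on the Banach space $X_{R} := C_{b}([R,\infty))$ with the sup norm, inside the closed ball $B_{2|c|} = \{w : \|w-c\|_{\infty}\le |c|\}$. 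The linear piece is controlled using the hypothesis $|V(\tau)|\lesssim (1+\tau)^{-\beta}$ with $\beta>2$, which makes $\int_{r}^{\infty}(\tau-r)|V(\tau)|\,d\tau \lesssim r^{2-\beta}$ small for $R$ large. The nonlinear piece is bounded by $\int_{r}^{\infty}|w(\tau)|^{5}/\tau^{3}\,d\tau \lesssim r^{-2}\|w\|_{\infty}^{5}$, again small for large $R$. Choosing $R=R(c,V)$ sufficiently large, $T$ maps $B_{2|c|}$ into itself and is a strict contraction, producing a unique fixed point $w$ on $[R,\infty)$ and, reading off the error term from the remainder integral, the desired decay $w(r)-c = O(r^{2-\beta}) + O(r^{-2}) = o(1)$.

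Next I would propagate $w$ from $r=R$ down to $r=0^{+}$ by classical ODE theory: on each interval $[r_{0},R]\subset(0,\infty)$ with $r_{0}>0$, the right-hand side $Vw + w^{5}/r^{4}$ is locally Lipschitz in $w$ uniformly in $r$, so the initial value problem with data $(w(R),w'(R))$ continues as long as $w$ remains bounded, and the defocusing structure (the $w^{5}/r^{4}$ term has the same sign as $w''$ when $w>0$, preventing finite-time collapse of $w$) gives global extension on $(0,\infty)$. Uniqueness of the radial solution with the prescribed asymptotic then follows: any two such solutions agree on $[R,\infty)$ by the contraction, and ODE uniqueness propagates the agreement to all of $(0,\infty)$. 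Undoing the substitution gives $u_{c}$ with the stated properties in $\dot H^{1}(B_{r}^{c})$ for every $r>0$ (the $\dot H^{1}$ norm on an exterior ball is finite since $u_{c}(x) = c/|x| + o(1/|x|)$ and $|\nabla(1/|x|)|^{2}|x|^{2}$ is integrable at infinity).

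For the second assertion, suppose $u_{c}\in\dot H^{1}(\mathbb{R}^{3})$. By Sobolev embedding $u_{c}\in L^{6}(\mathbb{R}^{3})$, so $u_{c}^{5}\in L^{6/5}$; together with $V\in C(\mathbb{R}^{3})$ bounded we get $-\Delta u_{c} = -Vu_{c}-u_{c}^{5}\in L^{6/5}_{\mathrm{loc}}$ \emph{as distributions on the punctured space}. The usual removable singularity argument applies: any distribution on $\mathbb{R}^{3}$ supported at $\{0\}$ is a finite linear combination of derivatives of $\delta_{0}$, but such a distribution cannot lie in $H^{-1}(\mathbb{R}^{3})$, so the equation $-\Delta u_{c} + Vu_{c} + u_{c}^{5} = 0$ holds on all of $\mathbb{R}^{3}$. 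Standard elliptic bootstrap ($W^{2,6/5}\hookrightarrow C^{0,\alpha}$, then Schauder) upgrades $u_{c}$ to $C^{1}(\mathbb{R}^{3})$ (indeed to the regularity of $V$).

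The main technical obstacle is the nonlinear contraction estimate, which relies essentially on the extra decay factor $1/\tau^{4}$ produced by the substitution $w=ru$; without it the critical nonlinearity $u^{5}$ would only just fail to be integrable at infinity. Once the right weighted estimates are in place the rest of the argument is routine ODE continuation plus a removable singularity lemma.
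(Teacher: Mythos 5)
Your reduction to $w=ru$, the integral equation $(Tw)(r)=c+\int_{r}^{\infty}(\tau-r)\bigl[Vw+w^{5}/\tau^{4}\bigr]d\tau$, and the contraction on $[R,\infty)$ for $R$ large are correct and do give existence and uniqueness of the solution with the prescribed asymptotics \emph{in a neighborhood of infinity} (the paper itself offers no proof, quoting the result from \cite{JLX,JLSX}). The genuine gap is the continuation down to $r=0^{+}$. Your justification --- that ``the defocusing structure \ldots prevent[s] finite-time collapse of $w$'' --- is exactly backwards. The sign $+u^{5}$ is the \emph{absorption} sign, for which boundary blow-up (Keller--Osserman) solutions exist: $w''=w^{5}/r^{4}+Vw$ with $w>0$ makes $w$ convex and increasing as $r$ decreases, and $w''\gtrsim w^{5}$ on compact $r$-intervals is precisely the mechanism producing blow-up at a positive radius (the local model $w''=w^{5}$ has solutions $\sim A(r-r_{*})^{-1/2}$). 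This is not merely an unproved step: for $V\equiv0$ and $c\neq0$ the extension actually fails. Writing $u=r^{-1/2}\phi(\ln r)$, the equation $\Delta u=u^{5}$ becomes the autonomous ODE $\phi''=\tfrac{1}{4}\phi+\phi^{5}$ with conserved energy $E=\tfrac{1}{2}(\phi')^{2}-\tfrac{1}{8}\phi^{2}-\tfrac{1}{6}\phi^{6}$. The exterior solution with $u\sim c/r$ at infinity lies on the stable manifold of the origin, so $E=0$ and $\phi'=-\bigl(\phi^{2}/4+\phi^{6}/3\bigr)^{1/2}$; since $\int^{\infty}\bigl(\phi^{2}/4+\phi^{6}/3\bigr)^{-1/2}d\phi<\infty$, the trajectory reaches $\phi=+\infty$ in finite backward time, i.e.\ $u_{c}$ blows up at some $r_{*}(c)=c^{2}r_{*}(1)>0$ and is not in $\dot{H}^{1}(B_{r}^{c})$ for $r<r_{*}(c)$.

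Consequently the statement read literally (``$u_{c}\in\dot{H}^{1}(B_{r}^{c})$ for any $r>0$'') cannot be proved by any argument; the version in \cite{JLX} attaches to each $c$ a maximal exterior region on which $u_{c}$ is defined, and definedness on all of $\mathbb{R}^{3}\setminus\{0\}$ is an extra property holding only for special $c$ (in particular for those corresponding to genuine steady states). The remaining pieces of your proposal --- the weighted estimates for the contraction, uniqueness by backward ODE uniqueness (after enlarging $R$ so that both competing solutions lie in the same contraction ball, and replacing the ball $B_{2|c|}$ by a small ball around $0$ when $c=0$), and the removable-singularity plus elliptic-bootstrap argument for the final assertion --- are sound. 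The one step you cannot supply is the global extension, and the ``defocusing prevents collapse'' heuristic you invoked for it should be discarded.
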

If we take the ground states as the solitons, we notice that the
optimal decay rate is $\frac{1}{\left\langle x\right\rangle }$. Even
if one can assume that $V(x)$ decays very fast, there is no hope to
improve the decay rate for the ground state.
\begin{lem}
Let $R,\,\beta_{1}$ be sufficiently large. There exist $\epsilon,\,\delta>0$,
such that if 
\begin{equation}
\sup_{x\in B_{R}^{c}}|x|^{\beta_{1}}|b(x)|<\delta,
\end{equation}
 then any solution $u$ to 
\begin{equation}
-\Delta u+b(x)u+u^{5}=0,\,\,x\in B_{R}^{c}
\end{equation}
 with $u|_{\partial B_{R}}\ge1$ satisfies 
\begin{equation}
|u(x)|\ge\frac{\epsilon}{|x|},\,\,{\rm for}\,\,x\in B_{R}^{c}.
\end{equation}
\end{lem}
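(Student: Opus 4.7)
The plan is to compare $u$ with the harmonic function that matches its boundary values on $\partial B_R$, and to control the nonlinear correction via a Green's function estimate. First, viewing the equation as $-\Delta u + (b(x) + u(x)^4)\, u = 0$, a standard application of the maximum principle combined with the boundary inequality $u|_{\partial B_R} \geq 1$ yields $u > 0$ on $B_R^c$. Let $h$ be the bounded harmonic function on $B_R^c$ with $h|_{\partial B_R} = u|_{\partial B_R}$ and $h(x) \to 0$ as $|x| \to \infty$. Comparing $h$ with the explicit harmonic function $R/|x|$ (which equals $1$ on $\partial B_R$ and vanishes at infinity) via the minimum principle for harmonic functions in exterior domains, I obtain $h(x) \geq R/|x|$ throughout $B_R^c$.

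Next, set $w = u - h$, so that $w$ vanishes on $\partial B_R$ and at infinity and satisfies $-\Delta w = -b u - u^5$. Using the Dirichlet Green's function $G_R$ for the exterior of $B_R$, which enjoys the pointwise bound $0 \leq G_R(x,y) \leq \frac{1}{4\pi|x-y|}$,
\[
|w(x)| \leq \int_{B_R^c} \frac{1}{4\pi|x-y|} \bigl( |b(y)|\, u(y) + u(y)^5 \bigr)\, dy.
\]
Granted an a priori pointwise bound of the form $u(y) \leq K/|y|$ on $B_R^c$ (see below), and using $|b(y)| \leq \delta/|y|^{\beta_1}$, the integrand is dominated by $\delta K/|y|^{\beta_1+1} + K^5/|y|^5$. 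Since $\beta_1$ is large, both terms decay strictly faster than $1/|y|^4$, so standard convolution estimates against the Newton kernel give $|w(x)| \leq \eta(R,\delta)/|x|$ with $\eta(R,\delta) \to 0$ as $R \to \infty$ and $\delta \to 0$.

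Choosing $R$ large and $\delta$ small so that $\eta(R,\delta) \leq R/2$, I conclude
\[
u(x) = h(x) + w(x) \geq \frac{R}{|x|} - \frac{\eta(R,\delta)}{|x|} \geq \frac{\epsilon}{|x|}
\]
on $B_R^c$, with $\epsilon = R/2$. The main obstacle in this plan is establishing the a priori upper bound $u(y) \leq K/|y|$ that makes the Green's function integral converge with the right decay rate; for a general $\dot{H}^1$ solution this can be obtained by local elliptic regularity (to upgrade to $L^\infty_{\mathrm{loc}}$) combined with a barrier/supersolution argument on $B_R^c$ exploiting the positivity and criticality of the $u^5$ nonlinearity.
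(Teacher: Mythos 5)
The paper itself offers no proof of this lemma; it is quoted from Jia--Liu--Xu \cite{JLX} (whose setting is radial), so your argument has to be judged on its own terms, and it contains a fatal quantitative gap. The nonlinear correction $w=-\int G_R\,(bu+u^5)$ is \emph{not} small compared with the harmonic minorant $R/|x|$, so the claim $|w(x)|\le \eta(R,\delta)/|x|$ with $\eta\le R/2$ fails. Your convolution bound $\int_{B_R^c}|x-y|^{-1}K^5|y|^{-5}\,dy\lesssim K^5R^{-2}|x|^{-1}$ is correct, but any a priori bound $u\le K/|y|$ on $B_R^c$ forces $K\ge R$ (since $u\ge1$ at $|y|=R$), so the resulting estimate on $|w|$ is of order $R^3/|x|$, which dwarfs $h\ge R/|x|$ instead of being absorbed by it. The cancellation is genuine and cannot be captured perturbatively from the boundary: take $b=0$, $u$ radial with $u=1$ on $\partial B_R$, and set $v(r)=ru(r)$, so that $v''=v^5/r^4\ge0$, $v$ is convex and bounded, hence decreasing to a limit $c\ge0$, with $v(r)=c+\int_r^\infty(\tau-r)v(\tau)^5\tau^{-4}\,d\tau$. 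Evaluating at $r=R$ and using $v\ge c$ gives $R\ge c^5/(6R^2)$, i.e.\ $c\lesssim R^{3/5}=o(R)$. Thus $u(x)\sim c/|x|$ at infinity while $h(x)=R/|x|$, so $|w|\sim R/|x|$; moreover your asserted conclusion $u\ge(R/2)/|x|$ is itself violated --- the correct $\epsilon$ is an absolute constant (one gets $c\ge1/2$ in this model), not $R/2$.

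The missing idea is that the perturbative regime is not all of $B_R^c$ but only the region where $|x|\,u(x)=O(1)$. In the radial model the drop of $v=ru$ from $v(R)\ge R$ down to size $O(1)$ must be handled non-perturbatively, e.g.\ by the convexity and monotonicity of $v$ (the defocusing sign of $u^5$ and the smallness of $b$ make $v$ essentially convex); only once $v$ has size $O(1)$ does the absorption $\int_r^\infty(\tau-r)v^5\tau^{-4}\,d\tau\le v^5/(2r^2)\le v/(2r^2)$ close and yield $v\ge\epsilon$ for an absolute $\epsilon$. A secondary, fixable point: your appeal to a ``standard maximum principle'' for $-\Delta+(b+u^4)$ to conclude $u>0$ needs justification, since $b$ may be negative; at a negative interior minimum one only obtains $u(x_0)^4\le-b(x_0)\le\delta R^{-\beta_1}$, so positivity requires the generalized maximum principle (positivity of the Dirichlet realization of $-\Delta+b$ on $B_R^c$ for $R$ large and $\delta$ small) together with control of $u$ at infinity, neither of which is stated.
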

Naively one might expect excited states to be unstable, since they
change sign. However in general this may not be the case, as seen
from the following theorem.
\begin{thm}
There exists an open set $\mathcal{O}\in Y$ such that for any $V\in\mathcal{O}$,
there exists an excited state $\phi$ to equation \eqref{eq:static}
which is stable. 
\end{thm}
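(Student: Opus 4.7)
The plan is to exhibit one concrete pair $(V_0,\phi_0)$ with $V_0\in Y$ and $\phi_0$ a sign-changing solution of \eqref{eq:static} such that the linearization $L_{\phi_0}=-\Delta+V_0+5\phi_0^4$ has empty point spectrum and no zero resonance, and then to upgrade from this single point to an open neighborhood $\mathcal{O}\subset Y$ by spectral stability of the discrete spectrum under perturbation of $V$.

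For the existence of $\phi_0$ I would work radially and use the shooting argument underlying the previously quoted theorem on radial steady states. Tuning the initial amplitude $\alpha=u(0)$ and using that $\alpha\mapsto u_\alpha$ is continuous, one picks $\alpha$ so that $u_\alpha$ has exactly one node and the $c/|x|$ tail that places it in $\dot H^1(\mathbb{R}^3)$. The potential $V_0$ is engineered to be a smooth, rapidly decaying radial well that is deep and localized enough that $-\Delta+V_0$ has at least two negative eigenvalues, which is what allows a sign-changing solution beyond the ground state; alternatively one may use a constrained minimization on the Nehari manifold inside a nodal symmetry class.

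The crucial step is the spectral analysis of $L_{\phi_0}=-\Delta+W_0$ with $W_0:=V_0+5\phi_0^4$. The non-negative term $5\phi_0^4$ partially cancels the attractive part of $V_0$ precisely where $\phi_0$ is large, and by tuning the depth and width of the well against the amplitude of the bubble one can arrange $\|(W_0)_-\|_{L^{3/2}(\mathbb{R}^3)}$ small enough that the Cwikel-Lieb-Rozenblum bound forces the number of negative eigenvalues of $L_{\phi_0}$ to be zero. Absence of eigenvalues embedded in $[0,\infty)$ is automatic for decaying potentials by Kato-Agmon-Simon, while the zero resonance is a codimension-one condition that can be removed by an arbitrarily small generic perturbation of $V_0$ within $Y$.

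Openness then follows from standard perturbation theory: for $V$ close to $V_0$, invertibility of $L_{\phi_0}$ together with the implicit function theorem provides a continuous branch $V\mapsto\phi_V$, hence $W_V\to W_0$ in $L^{3/2}\cap L^\infty$, which yields norm-resolvent convergence $L_{\phi_V}\to L_{\phi_0}$; the spectral gap at $0$ then persists, so the set $\mathcal{O}$ of admissible $V$ is open in $Y$. The main obstacle is the delicate spectral balancing act in the third paragraph: $V_0$ must be attractive enough to trap an excited state, yet the combined potential $V_0+5\phi_0^4$ must simultaneously be so weak that it supports no bound state at all, which is the whole point of the theorem being nontrivial.
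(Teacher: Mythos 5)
This theorem is not proved in the paper at all: it is quoted verbatim from \cite{JLX,JLSX} as part of the background on the elliptic problem, so there is no in-paper argument to compare against. Judged on its own terms, your proposal has a genuine gap at exactly the step you yourself flag as ``the delicate spectral balancing act.'' You need a sign-changing $\phi_0$ solving $-\Delta\phi_0+V_0\phi_0+\phi_0^5=0$ while $\left\Vert \left(V_0+5\phi_0^4\right)_-\right\Vert_{L^{3/2}}$ is small enough for Cwikel--Lieb--Rozenblum to exclude all negative spectrum. But $\phi_0$ is not a free parameter: it is determined by $V_0$, and in the radial one-node picture $\phi_0$ vanishes on its nodal sphere, where $5\phi_0^4$ provides no compensation whatsoever for the attractive part of $V_0$. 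Worse, the operator $A=-\Delta+V_0+\phi_0^4$ annihilates $\phi_0$ (a sign-changing zero-energy solution with $c/|x|$ tail), so by Sturm oscillation $A$ already has a negative eigenvalue, and the entire content of the theorem is that the extra nonnegative term $4\phi_0^4$ lifts that eigenvalue strictly past zero without parking it at zero. Asserting that ``one can arrange'' this by tuning depth and width is a restatement of the theorem, not a proof; no quantitative mechanism is given for why the CLR constant ever closes.

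There are secondary problems as well. The plan to remove a zero eigenvalue or resonance of $L_{\phi_0}$ ``by an arbitrarily small generic perturbation of $V_0$'' is circular with the final paragraph: if $L_{\phi_0}$ is degenerate, the implicit function theorem you invoke to produce the branch $V\mapsto\phi_V$ is unavailable, so after perturbing $V_0$ you must re-establish existence of the excited state by some other means (degree theory, or redoing the shooting), which you do not address. The construction actually used in the cited source is quite different and avoids the balancing problem entirely: one takes two widely separated stable wells (in the radial class, a central well plus a distant annular well), each trapping a stable positive ground state with strictly positive linearization, and glues the first bubble to the negative of the second. The resulting solution changes sign, hence is an excited state, while its linearized operator is a small perturbation of a direct sum of two operators each with no eigenvalues and no resonance, so stability is inherited for free, and openness in $Y$ follows as in your last paragraph. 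If you want to salvage your single-well approach, you would need to produce an explicit family of $(V_0,\phi_0)$ with verifiable spectral estimates; as written, the central claim is assumed rather than proved.
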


\section{The Construction and Stability of Multisolitons \label{sec:Construction}}

In this section, we prove the main results of this paper. For simplicity,
we discuss the case when $m=2$:
\begin{equation}
\partial_{tt}u-\Delta u+V_{1}(x)u+V_{2}\left(x-\vec{v}t\right)u+u^{5}=0\label{eq:eqtwo}
\end{equation}
and $\vec{v}$  is along the $x_{1}$ direction.

We start with an energy estimate based on the local energy decay for
the free wave equation, c.f. the Appendices of \cite{GC2}. This lemma
is particularly useful to handle strong interaction terms, see Remark
\ref{rem:eg}.
\begin{lem}
\label{lem:energyL}Consider
\begin{equation}
\partial_{tt}u-\Delta u=H
\end{equation}
with initial data
\begin{equation}
u(0)=f,\,u_{t}(0)=g.
\end{equation}
Then for any $\epsilon>0$ and  $|\vec{v}|<1$, one has
\begin{equation}\label{eq:decay1}
\sup_{t\geq0}\left(\|\nabla u(t)\|_{L^{2}}+\|u_{t}(t)\|_{L^{2}}\right)\lesssim_{\epsilon}\|g\|_{L^{2}}+\|f\|_{\dot{H}^{1}}+\left\Vert \left\langle x\right\rangle ^{\frac{1}{2}+\epsilon}H(t)\right\Vert _{L_{t,x}^{2}}
\end{equation}and 
\begin{equation}\label{eq:decay2}
\sup_{t\geq0}\left(\|\nabla u(t)\|_{L^{2}}+\|u_{t}(t)\|_{L^{2}}\right)\lesssim_{\epsilon}\|g\|_{L^{2}}+\|f\|_{\dot{H}^{1}}+\left\Vert \left\langle x-\vec{v}t\right\rangle ^{\frac{1}{2}+\epsilon}H(t)\right\Vert _{L_{t,x}^{2}}
\end{equation}
\end{lem}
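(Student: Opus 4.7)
The plan is to split $u$ into its homogeneous and inhomogeneous parts and bound each separately. Write $u = u_0 + u_H$, where $u_0$ solves the free wave equation with data $(f,g)$ and $u_H(t)=\int_0^t \frac{\sin((t-s)\sqrt{-\Delta})}{\sqrt{-\Delta}} H(s)\,ds$ solves the inhomogeneous equation with zero initial data. For $u_0$, conservation of energy immediately yields $\|\nabla u_0(t)\|_{L^2}+\|\partial_t u_0(t)\|_{L^2}=\|\nabla f\|_{L^2}+\|g\|_{L^2}$, which absorbs the first two terms on the right-hand side of both \eqref{eq:decay1} and \eqref{eq:decay2}. It remains to estimate $u_H$ in terms of the weighted $L^2_{t,x}$ norm of $H$.

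For the inhomogeneous piece I would invoke the KSS-type local energy decay estimate for the free wave equation in $\mathbb{R}^3$ recorded in the appendices of \cite{GC2}, namely
\begin{equation*}
\bigl\|\langle x\rangle^{-1/2-\epsilon}\nabla_{t,x}\, e^{\tau JH_F}(f_0,g_0)\bigr\|_{L^2_{\tau,x}} \lesssim_\epsilon \|(f_0,g_0)\|_{\dot{H}^1\times L^2}.
\end{equation*}
Fix $T\geq 0$ and pair $(u_H,\partial_t u_H)(T)$ against an arbitrary test pair $(\varphi,\psi)\in\dot H^1\times L^2$. Using Duhamel's formula and the self-adjointness of the free wave propagator,
\begin{equation*}
\bigl\langle(u_H,\partial_t u_H)(T),(\varphi,\psi)\bigr\rangle=\int_0^T\!\bigl\langle H(s),\eta(T-s)\bigr\rangle\,ds,
\end{equation*}
where $\eta$ is the appropriate component of the backward free evolution of $(\varphi,\psi)$. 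Splitting $H=\langle x\rangle^{1/2+\epsilon}H\cdot\langle x\rangle^{-1/2-\epsilon}$ and applying Cauchy--Schwarz together with the local energy decay for $\eta$ (whose constant is independent of $T$ by time-translation invariance) bounds the right-hand side by $\|\langle x\rangle^{1/2+\epsilon}H\|_{L^2_{t,x}}\|(\varphi,\psi)\|_{\dot H^1\times L^2}$. Taking the supremum over unit test pairs, and then over $T\geq 0$, yields \eqref{eq:decay1}.

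For the second inequality \eqref{eq:decay2} with moving weight $\langle x-\vec v t\rangle^{1/2+\epsilon}$, I would exploit Lorentz invariance to reduce to the stationary case. Since $|\vec v|<1$, applying the change of variables \eqref{eq:coord} conjugates $\partial_{tt}-\Delta$ to itself, sends the moving center $\vec v t$ to the origin in the new frame, and converts the weight to $\langle x'\rangle^{1/2+\epsilon}$ up to multiplicative constants depending only on $|\vec v|$. The norms $\dot H^1\times L^2$ on fixed-time slices transform equivalently with constants controlled by $\gamma=(1-|\vec v|^2)^{-1/2}$, so applying \eqref{eq:decay1} in the primed frame and transferring back gives \eqref{eq:decay2}.

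The main technical point is the bookkeeping in the duality step: one has to check that the retarded integral is indeed treated by the $TT^\ast$ heuristic, which works here because the backward free evolution of $(\varphi,\psi)$ enjoys the same local energy decay bound as the forward evolution. The other delicate point is the Lorentz argument for \eqref{eq:decay2}: one must verify that hypersurfaces of constant $t$ and constant $t'$ produce equivalent energy norms for $|\vec v|$ bounded away from $1$, and that the Jacobian of the spacetime change of variables only contributes bounded constants to the $L^2_{t,x}$ weighted norm of $H$. Both are standard but must be verified explicitly.
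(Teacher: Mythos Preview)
Your argument for \eqref{eq:decay1} matches the paper's: both split off the homogeneous part by energy conservation and then handle the Duhamel term by duality with the local energy decay estimate $\|\langle x\rangle^{-1/2-\epsilon}e^{-isA}\phi\|_{L^2_{s,x}}\lesssim\|\phi\|_{L^2}$. The paper packages this through the half-wave variable $U=\sqrt{-\Delta}\,u+i\partial_t u$ and bounds the operator $F\mapsto\int_0^\infty e^{-i(t-s)A}\langle x\rangle^{-1/2-\epsilon}F(s)\,ds$ from $L^2_{t,x}$ to $L^\infty_t L^2_x$, but the substance is identical to your $TT^*$ step.

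For \eqref{eq:decay2} the routes differ. The paper does \emph{not} perform a Lorentz transformation; it simply reruns the same duality argument with the stationary local energy decay replaced by its moving-center analogue
\[
\|\langle x-\vec v t\rangle^{-1/2-\epsilon}e^{-isA}\phi\|_{L^2_{s,x}}\lesssim\|\phi\|_{L^2},
\]
quoted as a black box from \cite{GC2,GC3}. Your Lorentz reduction is a legitimate alternative and in effect unpacks that citation, but be aware that the sentence ``the norms $\dot H^1\times L^2$ on fixed-time slices transform equivalently with constants controlled by $\gamma$'' understates what is needed: constant-$t$ and constant-$t'$ slices are \emph{different} spacelike hypersurfaces, and comparing the energies on them for an inhomogeneous equation is exactly the content of Theorem~\ref{thm:generalC} in the appendix, which carries an additional $\|H\|_{L^2_{t,x}}$ term on the right (harmless here, since it is dominated by the weighted norm). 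So your route works but is longer; the paper's is shorter because the moving-center local energy decay is taken off the shelf.
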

\begin{proof}
we set $A=\sqrt{-\Delta}$ and notice that 
\begin{equation}
\left\Vert Af\right\Vert _{L^{2}}\simeq\left\Vert f\right\Vert _{\dot{H}^{1}},\,\,\forall f\in C^{\infty}\left(\mathbb{R}^{3}\right).
\end{equation}
For real-valued $u=\left(u_{1},u_{2}\right)\in\mathcal{H}=\dot{H}^{1}\left(\mathbb{R}^{3}\right)\times L^{2}\left(\mathbb{\mathbb{R}}^{3}\right)$,
we write 
\begin{equation}
U:=Au_{1}+iu_{2}.
\end{equation}
and then
\begin{equation}
\left\Vert U\right\Vert _{L^{2}}\simeq\left\Vert \left(u_{1},u_{2}\right)\right\Vert _{\mathcal{H}}.
\end{equation}
We also notice that $u$ solves the original equation if and only
if 
\begin{equation}
U:=Au+i\partial_{t}u
\end{equation}
satisfies 
\begin{equation}
i\partial_{t}U=AU+H,
\end{equation}
\begin{equation}
U(0)=Af+ig\in L^{2}\left(\mathbb{R}^{3}\right).
\end{equation}
By Duhamel's formula, 
\begin{equation}
U(t)=e^{itA}U(0)-i\int_{0}^{t}e^{-i\left(t-s\right)A}H(s)\,ds.
\end{equation}
We will only prove the first estimate \eqref{eq:decay1}. The second one \eqref{eq:decay2} follows the same way with the the standard local energy decay replaced by the local energy decay developed in \cite{GC3,GC2}.
 
From the energy estimate for the free evolution,
\begin{equation}
\sup_{t\in\mathbb{R}}\left\Vert e^{itA}U(0)\right\Vert _{L_{x}^{2}}\lesssim\left\Vert U(0)\right\Vert _{L^{2}}.
\end{equation}
It suffices to bound 
\begin{equation}
\sup_{t\in\mathbb{R}}\left\Vert \int_{0}^{\infty}e^{-i\left(t-s\right)A}\left\langle x\right\rangle ^{-\frac{1}{2}-\epsilon}\left(\left\langle x\right\rangle ^{\frac{1}{2}+\epsilon}H(s)\right)\,ds\right\Vert _{L_{x}^{2}}.
\end{equation}
Denote 
\begin{equation}
\left\langle x\right\rangle ^{\frac{1}{2}+\epsilon}H(s)=N.
\end{equation}
It is clear that 
\begin{equation}
\left\Vert \int_{0}^{\infty}e^{-i\left(t-s\right)A}\left\langle x\right\rangle ^{-\frac{1}{2}-\epsilon}N(s)\,ds\right\Vert _{L_{t}^{\infty}L_{x}^{2}}\leq\left\Vert \widetilde{K}\right\Vert _{L_{t}^{2}L_{x}^{2}\rightarrow L_{t}^{\infty}L_{x}^{2}}\left\Vert N\right\Vert _{L_{t}^{2}L_{x}^{2}},
\end{equation}
where 
\begin{equation}
\left(\widetilde{K}F\right)(t):=\int_{0}^{\infty}e^{-i\left(t-s\right)A}\left\langle x\right\rangle ^{-\frac{1}{2}-\epsilon}F(s)\,ds.
\end{equation}
We need to estimate
\begin{equation}
\left\Vert \widetilde{K}\right\Vert _{L_{t}^{2}L_{x}^{2}\rightarrow L_{t}^{\infty}L_{x}^{2}}.
\end{equation}
Testing against $F\in L_{t}^{2}L_{x}^{2}$, clearly, 
\begin{equation}
\left\Vert \widetilde{K}F\right\Vert _{L_{t}^{\infty}L_{x}^{2}}\leq\left\Vert e^{-itA}\right\Vert _{L^{2}\rightarrow L_{t}^{\infty}L_{x}^{2}}\left\Vert \int_{0}^{\infty}e^{isA}\left\langle x\right\rangle ^{-\frac{1}{2}-\epsilon}F(s)\,ds\right\Vert _{L^{2}}.\label{eq:TKF-1-3}
\end{equation}
The first factors on the right-hand side of \eqref{eq:TKF-1-3} is bounded
by the energy estimate for the free evolution. Consider the second
factor, by duality, it suffices to show 
\begin{equation}
\left\Vert \left\langle x\right\rangle ^{-\frac{1}{2}-\epsilon}e^{-itA}\phi\right\Vert _{L_{t}^{2}L_{x}^{2}}\lesssim\left\Vert \phi\right\Vert _{L^{2}},\,\forall\phi\in L^{2}\left(\mathbb{R}^{3}\right),
\end{equation}
which is local energy decay. 

For estimate \eqref{eq:decay2}, we apply 
\begin{equation}
\left\Vert \left\langle x-\vec{v}t\right\rangle ^{-\frac{1}{2}-\epsilon}e^{-itA}\phi\right\Vert _{L_{t}^{2}L_{x}^{2}}\lesssim\left\Vert \phi\right\Vert _{L^{2}},\,\forall\phi\in L^{2}\left(\mathbb{R}^{3}\right)
\end{equation}
in the appendices in \cite{GC2} or Corollary 2.10 in \cite{GC3}.

Hence 
\begin{equation}
\left\Vert \int_{0}^{\infty}e^{isA}\left\langle x\right\rangle ^{-\frac{1}{2}-\epsilon}F(s)\,ds\right\Vert _{L^{2}}\lesssim\left\Vert F\right\Vert _{L_{t}^{2}L_{x}^{2}}.
\end{equation}
Therefore, indeed, we have 
\begin{equation}
\left\Vert \widetilde{K}\right\Vert _{L_{t}^{2}L_{x}^{2}\rightarrow L_{t}^{\infty}L_{x}^{2}}\leq C
\end{equation}
and 
\begin{equation}
\sup_{t\in\mathbb{R}}\left\Vert \int_{0}^{\infty}e^{-i\left(t-s\right)A}\left\langle x\right\rangle ^{-\frac{1}{2}-\epsilon}N(s)\,ds\right\Vert _{L_{x}^{2}}\lesssim\left\Vert N\right\Vert _{L_{t,x}^{2}}.
\end{equation}
Therefore, we have 
\begin{equation}
\sup_{t\geq0}\left(\|\nabla u(t)\|_{L^{2}}+\|u_{t}(t)\|_{L^{2}}\right)\lesssim\|g\|_{L^{2}}+\|f\|_{\dot{H}^{1}}+\left\Vert \left\langle x\right\rangle ^{\frac{1}{2}+\epsilon}H(t)\right\Vert _{L_{t,x}^{2}}
\end{equation}
and
\begin{equation}
\sup_{t\geq0}\left(\|\nabla u(t)\|_{L^{2}}+\|u_{t}(t)\|_{L^{2}}\right)\lesssim\|g\|_{L^{2}}+\|f\|_{\dot{H}^{1}}+\left\Vert \left\langle x-\vec{v}t\right\rangle ^{\frac{1}{2}+\epsilon}H(t)\right\Vert _{L_{t,x}^{2}}
\end{equation}
as claimed
\end{proof}
\begin{rem}
\label{rem:eg}As a concrete example, we set 
\begin{equation}
H(t)=\frac{1}{\left\langle x\right\rangle ^{4}}\frac{1}{\left\langle x-\vec{v}t\right\rangle }.\label{eq:exH}
\end{equation}which is one of the interaction terms in the nonlinear model.

We want to solve 
\begin{equation}
\partial_{tt}u-\Delta u=H
\end{equation}
Taking $1<\eta\ll\left|\vec{v}\right|$, consider
\begin{equation}
\int_{\mathbb{R}^{3}}\frac{\left\langle x\right\rangle ^{1+2\epsilon}}{\left\langle x\right\rangle ^{8}}\frac{1}{\left\langle x-\vec{v}t\right\rangle ^{2}}\,dx.
\end{equation}
Splitting integral into three pieces:
\begin{equation}
\int_{\left|x\right|\leq\eta t}\frac{\left\langle x\right\rangle ^{1+2\epsilon}}{\left\langle x\right\rangle ^{8}}\frac{1}{\left\langle x-\vec{v}t\right\rangle ^{2}}\,dx\lesssim\frac{1}{\left\langle t\right\rangle ^{2}}
\end{equation}
\begin{equation}
\int_{\left|x-\vec{v}t\right|\leq\eta t}\frac{\left\langle x\right\rangle ^{1+2\epsilon}}{\left\langle x\right\rangle ^{8}}\frac{1}{\left\langle x-\vec{v}t\right\rangle ^{2}}\,dx\lesssim\frac{1}{\left\langle t\right\rangle ^{6-2-2\epsilon-1}}\lesssim\frac{1}{\left\langle t\right\rangle ^{2}}
\end{equation}
\begin{equation}
\int_{\left|x-\vec{v}t\right|\geq\eta t,\,\left|x\right|\geq\eta t}\frac{\left\langle x\right\rangle ^{1+2\epsilon}}{\left\langle x\right\rangle ^{8}}\frac{1}{\left\langle x-\vec{v}t\right\rangle ^{2}}\,dx\lesssim\frac{1}{\left\langle t\right\rangle ^{2}}.
\end{equation}
Therefore 
\begin{equation}
\int_{\mathbb{R}^{3}}\frac{\left\langle x\right\rangle ^{2+2\epsilon}}{\left\langle x\right\rangle ^{8}}\frac{1}{\left\langle x-\vec{v}t\right\rangle ^{2}}\,dx\lesssim\frac{1}{\left\langle t\right\rangle ^{2}}.
\end{equation}
Clearly 
\begin{equation}
\left\Vert \left\langle x\right\rangle ^{\frac{1}{2}+\epsilon}H(t)\right\Vert _{L_{t,x}^{2}}\lesssim\left(\int_{0}^{\infty}\frac{1}{\left\langle t\right\rangle ^{2}}\right)^{\frac{1}{2}}<\infty.
\end{equation}
If we consider the case that 
\begin{equation}
\left\Vert u(t)\right\Vert _{\dot{H}^{1}\times L^{2}}\rightarrow0,\,\,t\rightarrow\infty,
\end{equation}
then
\begin{equation}
\left\Vert u(t_{1})\right\Vert _{\dot{H}^{1}\times L^{2}}=\left(\int_{t_{1}}^{\infty}\left(\int_{\mathbb{R}^{3}}\frac{\left\langle x\right\rangle ^{2+2\epsilon}}{\left\langle x\right\rangle ^{8}}\frac{1}{\left\langle x-\vec{v}t\right\rangle ^{2}}\,dx\right)dt\right)^{\frac{1}{2}}\lesssim\frac{1}{\sqrt{t_{1}}}.
\end{equation}
We point out that for this $H(t)$ as the inhomogeneous term, the trivial
energy estimate fails since $H(t)\notin L_{t}^{1}L_{x}^{2}$.
\end{rem}

\subsection{Stable solitons}

Later on, throughout this section, we will use the short-hand notation:
\begin{equation}
L_{t}^{p}L_{x}^{q}:=L_{t}^{p}\left([t_{0},\infty),\,L_{x}^{q}\right).\label{eq:notation}
\end{equation}
where $t_{0}$ is the large time which only depends on prescribed constants from Theorem \ref{thm:Stability}.

We first prove Theorem \ref{thm:Stability}. Setting 
\begin{equation}
h(t):=u(t)-W_{1}\left(x\right)-W_{2}\left(x-\vec{v}t\right),\label{eq:eqh}
\end{equation}
it is well-known that we just need to show that $h$ is bounded in
Strichartz norms, see Chen \cite{GC3} or Theorem \ref{thm:scattering} in the Appendix.
\begin{proof}[Proof of Theorem \ref{thm:Stability}]
By construction, we have 
\begin{align}
	\partial_{tt}h-\Delta h+h^{5}+\left(V_{1}(x)+5W_{1}^{4}(x)\right)h+\left(V_{2}\left(x-\vec{v}t\right)+5W_{2}^{4}\left(x-\vec{v}t\right)\right)h+a(x,t)h\nonumber \\
	=F_{1}(x,t)+F_{2}(x,t)+F(x,t)+N(h,x,t)
\end{align}
with 
\begin{equation}
a(x,t):=20W_{1}^{3}(x)W_{2}\left(x-\vec{v}t\right)+30W_{1}^{2}(x)W_{2}^{2}\left(x-\vec{v}t\right)+20W_{1}(x)W_{2}^{3}\left(x-\vec{v}t\right)
\end{equation}
\begin{equation}
F_{1}(x,t):=5W_{1}^{4}(x)W_{2}\left(x-\vec{v}t\right)+V_{1}(x)W_{2}(x-\vec{v}t)
\end{equation}
\begin{equation}
F_{2}(x,t):=5W_{1}(x)W_{2}^{4}\left(x-\vec{v}t\right)+W_{1}(x)V_{2}(x-\vec{v}t)
\end{equation}
\begin{equation}
F(x,t):=10W_{1}^{3}(x)W_{2}^{2}\left(x-\vec{v}t\right)+10W_{1}^{2}(x)W_{2}^{3}\left(x-\vec{v}t\right)
\end{equation}

\begin{align}
	N(h,x,t) & :=\left(10W_{1}^{3}(x)+30W_{1}^{2}(x)W_{2}\left(x-\vec{v}t\right)+30W_{1}(x)W_{2}^{2}\left(x-\vec{v}t\right)+10W_{2}^{3}(x-\vec{v}t)\right)h^{2}\nonumber \\
	& +\left(10W_{1}^{2}(x)+3W_{1}(x)W_{2}\left(x-\vec{v}t\right)+10W_{2}^{2}\left(x-\vec{v}t\right)\right)h^{3}\\
	& +\left(5W_{1}(x)+5W_{2}\left(x-\vec{v}t\right)\right)h^{4}\nonumber 
\end{align}
Furthermore, we denote 
\begin{equation}
M_{1}(h,x,t):=M_{1,1}(h,x,t)+M_{1,2}(h,x,t)+M_{1,3}(h,x,t)+M_{1,4}(h,x,t)
\end{equation}
where
\begin{equation}
M_{1,1}(h,x,t):=10W_{1}^{3}(x)h^{2},\,M_{1,2}(h,x,t):=30W_{1}^{2}(x)W_{2}\left(x-\vec{v}t\right)h^{2}
\end{equation}
and
\begin{equation}
M_{1,3}(h,x,t):=30W_{1}(x)W_{2}^{2}\left(x-\vec{v}t\right)h^{2},\,M_{1,4}(h,x,t):=10W_{2}^{3}(x-\vec{v}t)h^{2}.
\end{equation}
Also we use the notation:
\begin{equation}
M_{2}(h,x,t):=\left(10W_{1}^{2}(x)+3W_{1}(x)W_{2}\left(x-\vec{v}t\right)+10W_{2}^{2}\left(x-\vec{v}t\right)\right)h^{3},
\end{equation}
\begin{equation}
M_{3}(h,x,t):=\left(5W_{1}(x)+5W_{2}\left(x-\vec{v}t\right)\right)h^{4}
\end{equation}
Consider the iteration scheme
\begin{align}
	\partial_{tt}h_{i+1}-\Delta h_{i+1}+\left(V_{1}(x)+5W_{1}^{4}(x)\right)h_{i+1}+\left(V_{2}\left(x-\vec{v}t\right)+5W_{2}^{4}\left(x-\vec{v}t\right)\right)h_{i+1}\nonumber \\
	=F_{1}(x,t)+F_{2}(x,t)+F(x,t)+N(h_{i},x,t)-a(x,t)h_{i}-h_{i}^{5}.
\end{align}
Define the Strichartz norm of $h$ as 
\begin{equation}
\left\Vert h\right\Vert _{Stri}:=\sup_{\frac{1}{2}=\frac{1}{p}+\frac{3}{q},\,p=3,\,4,\,5}\left\Vert h\right\Vert _{L_{t}^{p}\left([t_{0},\infty),\,L_{x}^{q}\right).}\label{eq:StriNorm}
\end{equation}
For a function $G(x,t)$, we use the notation: 
\begin{equation}
G^{S}(x,t):=G(x+\vec{v}t,t).
\end{equation}
As in the Appendix, for fixed $\epsilon_{0}>0$ small, define the strong interactions spaces $I$ as
\begin{equation}
I:=\left\{ G(x,t)\in L_{x}^{\frac{3}{2},1}L_{t}^{2}\bigcap L_{x_{1}}^{1}L_{\widehat{x_{1}}}^{2,1}L_{t}^{2},\,\text{such that}\,\,\left\Vert \left\langle x\right\rangle ^{\frac{1}{2}+\epsilon_{0}}G \right\Vert _{L_{t,x}^{2}}<\infty\right\} ,\label{eq:Ispace-1}
\end{equation}
and local decay space $D$ as
\begin{equation}
D:=\left\{ \left\langle x\right\rangle ^{-3}G(x,t)\in L_{x}^{\frac{3}{2},1}L_{t}^{\infty}\bigcap L_{x_{1}}^{1}L_{\widehat{x_{1}}}^{2,1}L_{t}^{\infty}\right\} .\label{eq:Dspace}
\end{equation}
Define 
\begin{equation}
\left\Vert G\right\Vert _{I}:=\max\left\{ \,\left\Vert \left\langle x\right\rangle ^{\frac{1}{2}+\epsilon_{0}}G\right\Vert _{L_{t,x}^{2}},\,\left\Vert G\right\Vert _{L_{x_{1}}^{1}L_{\widehat{x_{1}}}^{2,1}L_{t}^{2}},\,\left\Vert G\right\Vert _{L_{x}^{\frac{3}{2},1}L_{t}^{2}}\right\} ,\label{eq:Inorm-1}
\end{equation}
and 
\begin{equation}
\left\Vert G\right\Vert _{D}:=\max\left\{ \left\Vert \left\langle x\right\rangle ^{-3}G(x,t)\right\Vert _{L_{x_{1}}^{1}L_{\widehat{x_{1}}}^{2,1}L_{t}^{\infty}},\,\left\Vert \left\langle x\right\rangle ^{-3}G\right\Vert _{L_{x}^{\frac{3}{2},1}L_{t}^{\infty}}\right\} .\label{eq:Dnorm}
\end{equation}
Using the notations before, by estimate \eqref{eq:RStrichartz} in Theorem
\ref{thm:EndRStriCB} from the linear theory in the Appendix, we have
\begin{align}
	\sup_{x\in\mathbb{R}^{3}}\left(\int_{t_{0}}^{\infty}\left|h_{i+1}(x,t)\right|^{2}dt\right)^{\frac{1}{2}} & \lesssim\|f\|_{L^{2}}+\|g\|_{\dot{H}^{1}}+\left\Vert F\right\Vert _{L_{t}^{1}L_{x}^{2}}\nonumber \\
	& +\left\Vert a(x,t)h_{i}\right\Vert _{L_{t}^{1}L_{x}^{2}}+\left\Vert h_{i}^{5}\right\Vert _{L_{t}^{1}L_{x}^{2}}\\
	& +\left\Vert M_{2}(h_{i},x,t)\right\Vert _{L_{t}^{1}L_{x}^{2}}+\left\Vert M_{3}(h_{i},x,t)\right\Vert _{L_{t}^{1}L_{x}^{2}}\nonumber \\
	& +\left\Vert M_{1,2}(h_{i},x,t)\right\Vert _{L_{t}^{1}L_{x}^{2}}+\left\Vert M_{1,3}(h_{i},x,t)\right\Vert _{L_{t}^{1}L_{x}^{2}}\nonumber \\
	& +\left\Vert M_{1,1}(h_{i},x,t)\right\Vert _{I}+\left\Vert M_{1,4}^{S}(h_{i},x,t)\right\Vert _{I}\nonumber \\
	& +\left\Vert F_{1}\right\Vert _{I}+\left\Vert F_{2}^{S}\right\Vert _{I}.\nonumber 
\end{align}
Applying H\"older's inequality and Strichartz estimates, we can estimate
\begin{equation}
\left\Vert a(x,t)h_{i}\right\Vert _{L_{t}^{1}L_{x}^{2}}\lesssim\left\Vert a\right\Vert _{L_{t}^{\frac{5}{4}}L_{x}^{\frac{5}{2}}}\left\Vert h_{i}\right\Vert _{L_{t}^{5}L_{x}^{10},}
\end{equation}
\begin{equation}
\left\Vert M_{2}(h_{i},x,t)\right\Vert _{L_{t}^{1}L_{x}^{2}}\lesssim\left\Vert h_{i}\right\Vert _{L_{t}^{3}L_{x}^{18},}
\end{equation}
\begin{equation}
\left\Vert M_{3}(h_{i},x,t)\right\Vert _{L_{t}^{1}L_{x}^{2}}\lesssim\left\Vert h_{i}\right\Vert _{L_{t}^{4}L_{x}^{12},}
\end{equation}
and 
\begin{equation}
\left\Vert M_{1,2}(h_{i},x,t)\right\Vert _{L_{t}^{1}L_{x}^{2}},\,\left\Vert M_{1,3}(h_{i},x,t)\right\Vert _{L_{t}^{1}L_{x}^{2}}\lesssim\left\Vert h_{i}\right\Vert _{L_{t}^{4}L_{x}^{12}.}
\end{equation}
For the strong-interaction terms, we notice that 
\begin{align}
	\left\Vert M_{1,1}(h_{i},x,t)\right\Vert _{L_{x}^{\frac{3}{2},1}L_{t}^{2}} & \lesssim\left\Vert W_{1}^{3}(x)h_{i}\right\Vert _{L_{x}^{\frac{3}{2},1}L_{t}^{\infty}}\left\Vert h_{i}\right\Vert _{L_{x}^{\infty}L_{t}^{2}}\nonumber \\
	& \lesssim\left\Vert h_{i}\right\Vert _{D}\left\Vert h_{i}\right\Vert _{L_{x}^{\infty}L_{t}^{2}}.
\end{align}
In the same manner, we can estimate all other norms in the definition
of $I$ and conclude that 
\begin{equation}
\left\Vert M_{1,1}(h_{i},x,t)\right\Vert _{I}\lesssim\left\Vert h_{i}\right\Vert _{D}\left\Vert h_{i}\right\Vert _{L_{x}^{\infty}L_{t}^{2}}.
\end{equation}
Similarly, 
\begin{equation}
\left\Vert M_{1,4}^{S}(h_{i},x,t)\right\Vert _{I}\lesssim\left\Vert h_{i}^{S}\right\Vert _{D}\left\Vert h_{i}^{S}\right\Vert _{L_{x}^{\infty}L_{t}^{2}}.
\end{equation}
Therefore, we know that 
\begin{align}
	\sup_{x\in\mathbb{R}^{3}}\left(\int_{t_{0}}^{\infty}\left|h_{i+1}(x,t)\right|^{2}dt\right)^{\frac{1}{2}}\lesssim & \|f\|_{L^{2}}+\|g\|_{\dot{H}^{1}}+\left\Vert F\right\Vert _{L_{t}^{1}L_{x}^{2}}\nonumber \\
	& +\left\Vert h_{i}\right\Vert _{D}\left\Vert h_{i}\right\Vert _{L_{x}^{\infty}L_{t}^{2}}+\left\Vert h_{i}^{S}\right\Vert _{D}\left\Vert h_{i}^{S}\right\Vert _{L_{x}^{\infty}L_{t}^{2}}\nonumber \\
	& +\left\Vert h_{i}\right\Vert _{Stri}+\left\Vert h_{i}\right\Vert _{Stri}^{5}+\left\Vert F_{1}\right\Vert _{I}+\left\Vert F_{2}^{S}\right\Vert _{I}.
\end{align}
Similarly, by estimate \eqref{eq:RStrichartzS} from in Theorem \ref{thm:EndRStriCB}, one has
\begin{align}
	\sup_{x\in\mathbb{R}^{3}}\left(\int_{t_{0}}^{\infty}\left|h_{i+1}(x+\vec{v}t,t)\right|^{2}dt\right)^{\frac{1}{2}}\lesssim & \|f\|_{L^{2}}+\|g\|_{\dot{H}^{1}}+\left\Vert F\right\Vert _{L_{t}^{1}L_{x}^{2}}\nonumber \\
	& +\left\Vert h_{i}\right\Vert _{D}\left\Vert h_{i}\right\Vert _{L_{x}^{\infty}L_{t}^{2}}+\left\Vert h_{i}^{S}\right\Vert _{D}\left\Vert h_{i}^{S}\right\Vert _{L_{x}^{\infty}L_{t}^{2}}\nonumber \\
	& +\left\Vert h_{i}\right\Vert _{Stri}+\left\Vert h_{i}\right\Vert _{Stri}^{5}+\left\Vert F_{1}\right\Vert _{I}+\left\Vert F_{2}^{S}\right\Vert _{I}.
\end{align}
For the local decay, by the estimates \eqref{eq:Localinfty} and \eqref{eq:localinftyS}
from Theorem \ref{thm:EndRStriCB}, we conclude that 
\begin{align}
	\left\Vert h_{i+1}(x,t)\right\Vert _{D} & \lesssim\|g\|_{L^{2}}+\|f\|_{\dot{H}^{1}}+\left\Vert h_{i}\right\Vert _{Stri}+\left\Vert h_{i}\right\Vert _{Stri}^{5}\nonumber \\
	& +\left\Vert h_{i}\right\Vert _{D}\left\Vert h_{i}\right\Vert _{L_{x}^{\infty}L_{t}^{2}}+\left\Vert h_{i}^{S}\right\Vert _{D}\left\Vert h_{i}^{S}\right\Vert _{L_{x}^{\infty}L_{t}^{2}}\\
	& +\left\Vert F_{1}\right\Vert _{I}+\left\Vert F_{2}^{S}\right\Vert _{I}+\left\Vert F\right\Vert _{L_{t}^{1}L_{x}^{2}}.\nonumber 
\end{align}
\begin{align}
	\left\Vert h_{i+1}(x+\vec{v}t,t)\right\Vert _{D} & \lesssim\|g\|_{L^{2}}+\|f\|_{\dot{H}^{1}}+\left\Vert h_{i}\right\Vert _{Stri}+\left\Vert h_{i}\right\Vert _{Stri}^{5}\nonumber \\
	& +\left\Vert h_{i}\right\Vert _{D}\left\Vert h_{i}\right\Vert _{L_{x}^{\infty}L_{t}^{2}}+\left\Vert h_{i}^{S}\right\Vert _{D}\left\Vert h_{i}^{S}\right\Vert _{L_{x}^{\infty}L_{t}^{2}}\\
	& +\left\Vert F_{1}\right\Vert _{I}+\left\Vert F_{2}^{S}\right\Vert _{I}+\left\Vert F\right\Vert _{L_{t}^{1}L_{x}^{2}}.\nonumber 
\end{align}
Following the argument in Section 5 from \cite{GC3} and in the Appendix,
using the reversed Strichartz estimates to derive regular Strichartz
estimates, one has 
\begin{align}
	\left\Vert h_{i+1}\right\Vert _{Stri}+\sup_{t\geq t_{0}}\left\Vert h_{i+1}\right\Vert _{\dot{H}^{1}\times L^{2}}\lesssim & \|g\|_{L^{2}}+\|f\|_{\dot{H}^{1}}+\left\Vert h_{i}\right\Vert _{Stri}+\left\Vert h_{i}\right\Vert _{Stri}^{5}\nonumber \\
	& +\left\Vert h_{i}\right\Vert _{D}\left\Vert h_{i}\right\Vert _{L_{x}^{\infty}L_{t}^{2}}+\left\Vert h_{i}^{S}\right\Vert _{D}\left\Vert h_{i}^{S}\right\Vert _{L_{x}^{\infty}L_{t}^{2}}\nonumber \\
	& +\left\Vert F_{1}\right\Vert _{I}+\left\Vert F_{2}^{S}\right\Vert _{I}+\left\Vert F\right\Vert _{L_{t}^{1}L_{x}^{2}}.
\end{align}
By the computations in Remark \ref{rem:eg}, we can choose $t_{0}$
large enough, so that 
\begin{equation}
\left\Vert F\right\Vert _{L_{t}^{1}\left([t_{0},\infty),\,L_{x}^{2}\right)}+\left\Vert \left\langle x\right\rangle ^{\frac{1}{2}+\epsilon_{0}}\left(F_{2}^{S}+F_{1}\right)\right\Vert _{L_{t}^{2}\left([t_{0},\infty),\,L_{x}^{2}\right)}\ll\epsilon
\end{equation}
where $\epsilon$ is the small constant appearing in the contraction.

First we show that $h_{i+1}$ is bounded in all Strichartz norms and
the energy norm. 

Define the space $S$ as
\begin{equation}
S=\left\{ \left\Vert u\right\Vert _{Stri},\,\left\Vert u\right\Vert _{\dot{H}^{1}\times L^{2}},\,\left\Vert u\right\Vert _{L_{x}^{\infty}L_{t}^{2}},\,\left\Vert u^{S}\right\Vert _{L_{x}^{\infty}L_{t}^{2}},\,\left\Vert u\right\Vert _{D},\,\left\Vert u^{S}\right\Vert _{D}<\infty\right\} \label{eq:Sspace}
\end{equation}
By the iteration scheme:
\begin{align}
\partial_{tt}h_{i+1}-\Delta h_{i+1}+\left(V_{1}(x)+5W_{1}^{4}(x)\right)h_{i+1}+\left(V_{2}\left(x-\vec{v}t\right)+5W_{2}^{4}\left(x-\vec{v}t\right)\right)h_{i+1}\nonumber \\
=F_{1}(x,t)+F_{2}(x,t)+F(x,t)+N(h_{i},x,t)-a(x,t)h_{i}-h_{i}^{5}.
\end{align}with data
\begin{equation}
\left(h_{i}(t_{0}),\partial_{t}h_{i}(t_{0})\right)=\left(f,g\right)
\end{equation}
Then
\begin{align}
\left\Vert h_{i+1}-h_{j+1}\right\Vert _{S} & \lesssim\eta\left\Vert h_{i}-h_{j}\right\Vert _{L_{t}^{5}L_{x}^{10}}\label{eq:differ}\\
& +\left(\left\Vert h_{i}\right\Vert _{D}+\left\Vert h_{j}\right\Vert _{D}\right)\left\Vert h_{i}-h_{j}\right\Vert _{L_{x}^{\infty}L_{t}^{2}}\nonumber \\
& +\left(\left\Vert h_{i}^{S}\right\Vert _{D}+\left\Vert h_{j}^{S}\right\Vert _{D}\right)\left\Vert h_{i}^{S}-h_{j}^{S}\right\Vert _{L_{x}^{\infty}L_{t}^{2}}\nonumber \\
& +\left(\left\Vert h_{i}^{2}\right\Vert _{L_{t}^{\frac{3}{2}}L_{x}^{9}}+\left\Vert h_{j}^{2}\right\Vert _{L_{t}^{\frac{3}{2}}L_{x}^{9}}\right)\left\Vert h_{i}-h_{j}\right\Vert _{L_{t}^{3}L_{x}^{18}}\nonumber \\
& +\left(\left\Vert h_{i}^{3}\right\Vert _{L_{t}^{\frac{4}{3}}L_{x}^{4}}+\left\Vert h_{j}^{3}\right\Vert _{L_{t}^{\frac{4}{3}}L_{x}^{4}}\right)\left\Vert h_{i}-h_{j}\right\Vert _{L_{t}^{4}L_{x}^{12}}\nonumber \\
& +\left(\left\Vert h_{i}^{4}\right\Vert _{L_{t}^{\frac{5}{4}}L_{x}^{\frac{5}{2}}}+\left\Vert h_{j}^{4}\right\Vert _{L_{t}^{\frac{5}{4}}L_{x}^{\frac{5}{2}}}\right)\left\Vert h_{i}-h_{j}\right\Vert _{L_{t}^{5}L_{x}^{10}}\nonumber 
\end{align}
Let $h_{-1}=0$. We have
\begin{align}
\partial_{tt}h_{0}-\Delta h_{0}+\left(V_{1}(x)+5W_{1}^{4}(x)\right)h_{0}+\left(V_{2}\left(x-\vec{v}t\right)+5W_{2}^{4}\left(x-\vec{v}t\right)\right)h_{0}\nonumber \\
=F_{1}(x,t)+F_{2}(x,t)+F(x,t).
\end{align} 
with 
\begin{equation}
\left(h_{0}(t_{0}),\partial_{t}h_{0}(t_{0})\right)=\left(f,g\right)
\end{equation}
such that 
\begin{equation}
\left\Vert \left(f,g\right)\right\Vert _{\dot{H}^{1}\times L^{2}}\ll\epsilon.
\end{equation}
Then by our Strichartz estimates from Theorem \ref{thm:StriCharB}, Theorem \ref{thm:EndRStriCB} and Theorem \ref{thm:EnergyCharge}, one has
\begin{equation}
\left\Vert h_{0}\right\Vert _{S}\leq\epsilon\ll1.
\end{equation}
By induction, suppose that
\begin{equation}
\left\Vert h_{j}\right\Vert _{S}\leq2\epsilon\ll1.
\end{equation}
By similar computations to the above, we can conclude that 
\begin{align}
	\left\Vert h_{j+1}-h_{0}\right\Vert _{S} & \lesssim\eta\left\Vert h_{j}\right\Vert _{L_{t}^{5}L_{x}^{10}}\label{eq:bound1}\\
	& +\left(\left\Vert h_{j}\right\Vert _{D}\left\Vert h_{j}\right\Vert _{L_{x}^{\infty}L_{t}^{2}}+\left\Vert h_{j}^{S}\right\Vert _{D}\left\Vert h_{j}^{S}\right\Vert _{L_{x}^{\infty}L_{t}^{2}}\right)\nonumber \\
	& +\left(\left\Vert h_{j}^{2}\right\Vert _{L_{t}^{\frac{3}{2}}L_{x}^{9}}\left\Vert h_{j}\right\Vert _{L_{t}^{3}L_{x}^{18}}\right)\nonumber \\
	& +\left(\left\Vert h_{j}^{3}\right\Vert _{L_{t}^{\frac{4}{3}}L_{x}^{4}}\left\Vert h_{j}\right\Vert _{L_{t}^{4}L_{x}^{12}}\right)\nonumber \\
	& +\left(\left\Vert h_{j}^{4}\right\Vert _{L_{t}^{\frac{5}{4}}L_{x}^{\frac{5}{2}}}\left\Vert h_{j}\right\Vert _{L_{t}^{5}L_{x}^{10}}\right)\nonumber \\
	\lesssim & \eta\left\Vert h_{j}\right\Vert _{S}+\left\Vert h_{j}\right\Vert _{S}^{2}+\left\Vert h_{j}\right\Vert _{S}^{3}+\left\Vert h_{j}\right\Vert _{S}^{4}+\left\Vert h_{j}\right\Vert _{S}^{5}\nonumber \\
	\leq & C\left(2\eta\epsilon+4\epsilon^{2}+8\epsilon^{3}+16\epsilon^{4}+32\epsilon^{5}\right)\nonumber \\
	\leq & C\left(2\eta+4\epsilon+8\epsilon^{2}+16\epsilon^{3}+32\epsilon^{4}\right)\epsilon.\nonumber 
\end{align}
One just needs to pick $\epsilon$ small and $\eta$ small such that
\begin{equation}
C\left(\eta+4\epsilon+8\epsilon^{2}+16\epsilon^{3}+32\epsilon^{4}\right)<\frac{1}{2}.
\end{equation}
\begin{equation}
\eta=\left\Vert a(x,t)\right\Vert _{L_{t}^{\frac{5}{4}}\left([t_{0},\infty),\,L_{x}^{\frac{5}{2}}\right)}.
\end{equation}
Note that 
\begin{equation}
\left\Vert 20W_{1}^{3}(x)W_{2}\left(x-\vec{v}t\right)+30W_{1}^{2}(x)W_{2}^{2}\left(x-\vec{v}t\right)+20W_{1}(x)W_{2}^{3}\left(x-\vec{v}t\right)\right\Vert _{L_{t}^{\frac{5}{4}}\left([t_{0},\infty),\,L_{x}^{\frac{5}{2}}\right)}
\end{equation}
can be made sufficiently small provided $t_{0}$ is large enough.
Therefore
\begin{equation}
\left\Vert a(x,t)\right\Vert _{L_{t}^{\frac{5}{4}}\left([t_{0},\infty),\,L_{x}^{\frac{5}{2}}\right)}
\end{equation}
can be made arbitrarily small provided $t_{0}$ is large.

Therefore, by induction, we have  
\begin{equation}
\left\Vert h_{j+1}\right\Vert _{S}\leq2\epsilon.
\end{equation}
Next we show that the above construction gives a contraction. By almost
the same computations as above,
\begin{align}
	\left\Vert h_{i+1}-h_{i}\right\Vert _{S} & \lesssim\eta\left\Vert h_{i}-h_{i-1}\right\Vert _{L_{t}^{5}L_{x}^{10}}\label{eq:bound2}\\
	& +\left(\left\Vert h_{i}\right\Vert _{D}+\left\Vert h_{i-1}\right\Vert _{D}\right)\left\Vert h_{i}-h_{i-1}\right\Vert _{L_{x}^{\infty}L_{t}^{2}}\nonumber \\
	& +\left(\left\Vert h_{i}^{S}\right\Vert _{D}+\left\Vert h_{i-1}^{S}\right\Vert _{D}\right)\left\Vert h_{i}^{S}-h_{i-1}^{S}\right\Vert _{L_{x}^{\infty}L_{t}^{2}}\nonumber \\
	& +\left(\left\Vert h_{i}^{2}\right\Vert _{L_{t}^{\frac{3}{2}}L_{x}^{9}}+\left\Vert h_{i-1}^{2}\right\Vert _{L_{t}^{\frac{3}{2}}L_{x}^{9}}\right)\left\Vert h_{i}-h_{i-1}\right\Vert _{L_{t}^{3}L_{x}^{18}}\nonumber \\
	& +\left(\left\Vert h_{i}^{3}\right\Vert _{L_{t}^{\frac{4}{3}}L_{x}^{4}}+\left\Vert h_{i-1}^{3}\right\Vert _{L_{t}^{\frac{4}{3}}L_{x}^{4}}\right)\left\Vert h_{i}-h_{i-1}\right\Vert _{L_{t}^{4}L_{x}^{12}}\nonumber \\
	& +\left(\left\Vert h_{i}^{4}\right\Vert _{L_{t}^{\frac{5}{4}}L_{x}^{\frac{5}{2}}}+\left\Vert h_{i-1}^{4}\right\Vert _{L_{t}^{\frac{5}{4}}L_{x}^{\frac{5}{2}}}\right)\left\Vert h_{i}-h_{i-1}\right\Vert _{L_{t}^{5}L_{x}^{10}}\nonumber \\
	\leq & \frac{1}{2}\left\Vert h_{i}-h_{i-1}\right\Vert _{S}.\nonumber 
\end{align}
Therefore by the Banach fixed-point theorem, there exists a unique
solution to 
\begin{align}
	\partial_{tt}h-\Delta h+h^{5}+\left(V_{1}(x)+5W_{1}^{4}(x)\right)h+\left(V_{2}\left(x-\vec{v}t\right)+5W_{2}^{4}\left(x-\vec{v}t\right)\right)h+a(x,t)h\nonumber \\
	=F_{1}(x,t)+F_{2}(x,t)+F(x,t)+N(h,x,t)
\end{align}
such that 
\begin{equation}
\left\Vert h\right\Vert _{S}\leq2\epsilon.
\end{equation}
Hence by Theorem \ref{thm:scattering},
\begin{equation}
u(t)-W_{1}\left(x\right)-W_{2}\left(x-\vec{v}t\right)
\end{equation}
scatters to free wave.
\end{proof}
\begin{rem}
	The quadratic term can also be handled by estimate \eqref{eq:EndStrichartz}
	from Theorem \ref{thm:StriCharB}. 
\end{rem}
Secondly, we show the existence of the purely multi-soliton solution.
We solve the equation for $h$ backwards from infinity.
\begin{proof}[Proof of Theorem \ref{thm:existence}]
Again, we consider 
	\begin{equation}
	h(t):=u(t)-W_{1}\left(x\right)-W_{2}\left(x-\vec{v}t\right),
	\end{equation}
	then
	\begin{align}
		\partial_{tt}h-\Delta h+h^{5}+\left(V_{1}(x)+5W_{1}^{4}(x)\right)h+\left(V_{2}\left(x-\vec{v}t\right)+5W_{2}^{4}\left(x-\vec{v}t\right)\right)h\nonumber \\
		=:F_{1}(x,t)+F_{2}(x,t)+F(x,t)+N(h,x,t)-a(x,t)h\\
		=:\mathrm{F}(h,x,t)
	\end{align}
	and 
	\begin{equation}
	\partial_{tt}h-\Delta h=:G(h,x,t),\label{eq:scattE}
	\end{equation}
	\[
	\left\Vert h(t)\right\Vert _{\dot{H}^{1}\times L^{2}}\rightarrow0,\,t\rightarrow\infty.
	\]
	As the beginning of this section, we set $A=\sqrt{-\Delta}$ and
	notice that $h$ solves \eqref{eq:scattE} if and only if 
	\begin{equation}
	H:=Ah+i\partial_{t}h
	\end{equation}
	satisfies 
	\begin{equation}
	i\partial_{t}H=AH+G(h,x,t),
	\end{equation}
	\begin{equation}
	H(t)\rightarrow0,\,\,t\rightarrow\infty
	\end{equation}
	in the sense of $L^{2}$ norm.
	
	By Duhamel's formula, for fixed $T$ 
	\begin{equation}
	H(t)=e^{i\left(t-T\right)A}H(T)-i\int_{T}^{t}e^{-i\left(t-s\right)A}G(h,\cdot,s)\,ds.
	\end{equation}
	Letting $T$ go to $\infty$, we know $H(T)\rightarrow0$, so 
	\begin{equation}
	H(t):=i\int_{t}^{\infty}e^{i\left(t-s\right)A}G\left(h,\cdot,s\right)\,ds.
	\end{equation}
	By construction, we just need to show $H(t)$ is well-defined in $L^{2}$,
	then automatically, 
	\begin{equation}
	\left\Vert u(t)-W_{1}\left(x\right)-W_{2}\left(x-\vec{v}t\right)\right\Vert _{\dot{H}^{1}\times L^{2}}\rightarrow0.
	\end{equation}
	It suffices to show 
	\begin{equation}
	H(t)=i\int_{t}^{\infty}e^{i\left(t-s\right)A}G\left(h,\cdot,s\right)\,ds\in L^{2},\label{eq:backeq}
	\end{equation}
	is well-defined. We show the existence of such a solution for $t\geq t_{0}$
	provided $t_{0}$ is large enough. This can be done by a similar contraction
	argument to the previous proof.
	
	Indeed, we consider 
	\begin{align}
		\partial_{tt}h_{i+1}-\Delta h_{i+1}+\left(V_{1}(x)+5W_{1}^{4}(x)\right)h_{i+1}+\left(V_{2}\left(x-\vec{v}t\right)+5W_{2}^{4}\left(x-\vec{v}t\right)\right)h_{i+1}\nonumber \\
		=F_{1}(x,t)+F_{2}(x,t)+F(x,t)+N(h_{i},x,t)-a(x,t)h_{i}-h_{i}^{5}.
	\end{align}
	Again setting $h_{-1}=0$, then by Duhamel's formula and the equation
	\eqref{eq:backeq}, we have 
	\begin{align}
		h_{0}(x,t) & =\int_{t}^{\infty}U(t,s)\left(F_{1}(\cdot,t)+F_{2}(\cdot,s)+F(\cdot,s)\right)\,ds\nonumber \\
		& =\int_{t}^{\infty}U(t,s)\mathrm{F}(h_{-1},\cdot,s)\,ds.
	\end{align}
	Then by estimates \eqref{eq:Strichartz} and \eqref{eq:RStrichartzS}
	from Theorem \ref{thm:EndRStriCB}, one has 
	\begin{align}
		\sup_{x\in\mathbb{R}^{3}}\left(\int_{t_{1}}^{\infty}\left|h_{0}(x,t)\right|^{2}dt\right)^{\frac{1}{2}} & \lesssim\left\Vert F\right\Vert _{L_{t}^{1}[t_{1},\infty)L_{x}^{2}}+\left\Vert F_{1}\right\Vert _{I_{t_{1}}}+\left\Vert F_{2}^{S}\right\Vert _{I_{t_{1}}}\nonumber \\
		& \lesssim\frac{1}{\sqrt{t_{1}}}
	\end{align}
	and 
	\begin{align}
		\sup_{x\in\mathbb{R}^{3}}\left(\int_{t_{1}}^{\infty}\left|h_{0}(x+\vec{v}t,t)\right|^{2}dt\right)^{\frac{1}{2}} & \lesssim\left\Vert F\right\Vert _{L_{t}^{1}[t_{1},\infty)L_{x}^{2}}+\left\Vert F_{1}\right\Vert _{I_{t_{1}}}+\left\Vert F_{2}^{S}\right\Vert _{I_{t_{1}}}\nonumber \\
		& \lesssim\frac{1}{\sqrt{t_{1}}},
	\end{align}
	where $I_{t_{1}}$ is the space obtained by restricting space $I$
	given by \eqref{eq:Ispace-1} onto $[t_{1},\infty)$.
	
	Then by the argument in Lemma \ref{lem:energyL} and Remark \ref{rem:eg},
	we write 
	\begin{align}
		\partial_{tt}h_{0}-\Delta h_{0} & =-\left(V_{1}(x)+5W_{1}^{4}(x)\right)h_{0}+\left(V_{2}\left(x-\vec{v}t\right)+5W_{2}^{4}\left(x-\vec{v}t\right)\right)h_{0}\nonumber \\
		& +F_{1}(x,t)+F_{2}(x,t)+F(x,t)\nonumber \\
		& =:\mathrm{D}\left(h_{-1},x,t\right)
	\end{align}
and then conclude that 
	\begin{equation}
	\left\Vert h_{0}(t_{1})\right\Vert _{\dot{H}^{1}\times L^{2}}\lesssim\frac{1}{\sqrt{t_{1}}}.
	\end{equation}
	It follows that 
	\begin{equation}
	\left\Vert h_{0}\right\Vert _{S_{t_{1}}}\lesssim\frac{1}{\sqrt{t_{1}}}
	\end{equation}
	where $S_{t_{1}}$ is the space given by \eqref{eq:Sspace} restricted
	onto onto $[t_{1},\infty)$.
	
	Next, we can run the contraction argument as the proof Theorem \ref{thm:Stability}.
	We consider the iteration give by the following formula.	
	\begin{equation}
	h_{i+1}(x,t)=\int_{t}^{\infty}U(t,s)\mathrm{F}(h_{i},\cdot,s)\,ds.
	\end{equation}
	Then by the same computations as \eqref{eq:differ} restricted onto
	$[t_{1},\infty)$, one has
	\begin{align}
		\left\Vert h_{i+1}-h_{j+1}\right\Vert _{S_{t_{1}}} & \lesssim\eta\left\Vert h_{i}-h_{j}\right\Vert _{L_{t}^{5}[t_{1},\infty)L_{x}^{10}}\nonumber\\
		& +\left(\left\Vert h_{i}\right\Vert _{D_{t_{1}}}+\left\Vert h_{j}\right\Vert _{D_{t_{1}}}\right)\left\Vert h_{i}-h_{j}\right\Vert _{L_{x}^{\infty}L_{t}^{2}[t_{1},\infty)}\label{eq:differ-1} \\
		& +\left(\left\Vert h_{i}^{S}\right\Vert _{D_{t_{1}}}+\left\Vert h_{j}^{S}\right\Vert _{D_{t_{1}}}\right)\left\Vert h_{i}^{S}-h_{j}^{S}\right\Vert _{L_{x}^{\infty}L_{t}^{2}[t_{1},\infty)}\nonumber \\
		& +\left(\left\Vert h_{i}^{2}\right\Vert _{L_{t}^{\frac{3}{2}}[t_{1},\infty)L_{x}^{9}}+\left\Vert h_{j}^{2}\right\Vert _{L_{t}^{\frac{3}{2}}[t_{1},\infty)L_{x}^{9}}\right)\left\Vert h_{i}-h_{j}\right\Vert _{L_{t}^{3}[t_{1},\infty)L_{x}^{18}}\nonumber \\
		& +\left(\left\Vert h_{i}^{3}\right\Vert _{L_{t}^{\frac{4}{3}}[t_{1},\infty)L_{x}^{4}}+\left\Vert h_{j}^{3}\right\Vert _{L_{t}^{\frac{4}{3}}[t_{1},\infty)L_{x}^{4}}\right)\left\Vert h_{i}-h_{j}\right\Vert _{L_{t}^{4}[t_{1},\infty)L_{x}^{12}}\nonumber \\
		& +\left(\left\Vert h_{i}^{4}\right\Vert _{L_{t}^{\frac{5}{4}}[t_{1},\infty)L_{x}^{\frac{5}{2}}}+\left\Vert h_{j}^{4}\right\Vert _{L_{t}^{\frac{5}{4}}[t_{1},\infty)L_{x}^{\frac{5}{2}}}\right)\left\Vert h_{i}-h_{j}\right\Vert _{L_{t}^{5}[t_{1},\infty)L_{x}^{10}}.\nonumber 
	\end{align}
	For all $t_{1}$ such that 
	\begin{equation}
	t_{0}\ll t_{1},\,\,\frac{1}{\sqrt{t_{1}}}\ll\epsilon.
	\end{equation}
	where $t_{0}$ and $\epsilon$ are constants depend on prescribed
	constants as in the proof of Theorem \ref{thm:Stability}. 
	
	We can conclude that 
	\begin{equation}
	\left\Vert h_{i}-h_{0}\right\Vert _{S_{t_{1}}}\lesssim\frac{1}{\sqrt{t_{1}}}
	\end{equation}
	\begin{align}
		\left\Vert h_{i+1}-h_{i}\right\Vert _{S_{t_{1}}} & \leq\frac{1}{2}\left\Vert h_{i}-h_{i-1}\right\Vert _{S_{t_{1}}}
	\end{align}
	Therefore by the Banach fixed-point theorem, there exists a unique
	solution to 
	\begin{equation}
	h(x,t)=\int_{t}^{\infty}U(t,s)\mathrm{F}(h,\cdot,s)\,ds
	\end{equation}
	such that 
	\begin{equation}
	\left\Vert h(t)\right\Vert _{\dot{H}^{1}\times L^{2}}\lesssim\frac{1}{\sqrt{t}}.
	\end{equation}
	Therefore, we conclude that if we write 
	\begin{equation}
	U[t]=\left(u,u_{t}\right)^{t},\,W[t]=\left(W_{1}\left(x\right)-W_{2}\left(x-\vec{v}t\right),\,\partial_{t}\left(W_{1}\left(x\right)-W_{2}\left(x-\vec{v}t\right)\right)\right)^{t},
	\end{equation}
	there exists a solution $u$ to 
	\begin{equation}
	\partial_{tt}u-\Delta u+V_{1}\left(x\right)u+V_{2}\left(x-\vec{v}t\right)u+u^{5}=0,
	\end{equation}
	such that 
	\begin{equation}
	\lim_{t\rightarrow\infty}\left\Vert U[t]-W[t]\right\Vert _{\dot{H}^{1}\times L^{2}}= 0.
	\end{equation}
	Moreover, we have the decay rate 
	\begin{equation}
	\left\Vert U[t]-W[t]\right\Vert _{\dot{H}^{1}\times L^{2}}\lesssim\frac{1}{\sqrt{t}}.
	\end{equation}
	as $t\rightarrow\infty$. We are done.
\end{proof}

\subsection{Unstable solitons}

To finish this section, we discuss the case that we have some unstable
solitons. From the discussion above, the linear model still plays a
pivotal role. But in this case, the analysis is much more involved
due to the unstable structure. Consider 
\begin{equation}
\partial_{tt}u-\Delta u+V_{1}(x)u+V_{2}\left(x-\vec{v}t\right)u+u^{5}=0
\end{equation}
and
\begin{equation}
h=u(t)-Q_{1}\left(x\right)-Q_{2}\left(x-\vec{v}t\right).
\end{equation}
where both $Q_{1}$ and $Q_{2}$ are unstable. For simplicity, suppose
that 
\begin{equation}
L_{Q_{1}}=-\Delta+V_{1}+5Q_{1}^{4}
\end{equation}
has one negative eigenvalue and zero is neither an eigenvalue nor
resonance. Also suppose 
\begin{equation}
L_{Q_{2}}=-\Delta+V_{2}^{v}+5\left(Q_{2}^{v}\right)^{4}
\end{equation}
has one negative eigenvalue and zero is neither an eigenvalue nor
resonance.

With $\lambda>0,\,\,\mu>0,$
\begin{equation}
L_{Q_{1}}w=-\lambda^{2}w,\,\,L_{Q_{2}}m=-\mu^{2}m.
\end{equation}
$w$ and $m$ decay exponentially by Agmon's estimate, see \cite{Agmon,GC2}.
The analysis can be easily adapted to the most general situation. 

Set 
\begin{equation}
Q[t]=\left(Q_{1}\left(x\right)-Q_{2}\left(x-\vec{v}t\right),\,\partial_{t}\left(Q_{1}\left(x\right)-Q_{2}\left(x-\vec{v}t\right)\right)\right)^{t}.
\end{equation}

\begin{thm}
	\label{thm:condstab}Suppose that $0<\epsilon\ll1$ is small enough
	and $1\ll t_{0}$ is large enough. There is a codimension $1+1$ smooth manifold
	$\mathbb{\mathcal{M}}\in\dot{H}\left(\mathbb{R}^{3}\right)\times L^{2}\left(\mathbb{R}^{3}\right)$
	around the $\epsilon$ neighborhood around $Q[t_{0}]$ such that if
	$u\in\mathcal{M}$ solve
	\begin{equation}
	\partial_{tt}u-\Delta u+V_{1}(x)u+V_{2}\left(x-\vec{v}t\right)u+u^{5}=0,
	\end{equation}
	and 
	\begin{equation}
	\left\Vert U[t_{0}]-Q[t_{0}]\right\Vert _{\dot{H}^{1}\times L^{2}}\leq\epsilon.
	\end{equation}
	Then there exists free data
	\begin{equation}
	U_{0}[0]=\left(f_{0},g_{0}\right)^{t}\in\dot{H}^{1}\times L^{2}
	\end{equation}
	such that
	\begin{equation}
	\left\Vert U[t]-Q[t]-e^{tJH_{F}}U_{0}[0]\right\Vert _{\dot{H}^{1}\times L^{2}}\rightarrow0,\,\,t\rightarrow\infty.
	\end{equation}
	In other words, the error $u(t)-Q_{1}\left(x\right)-Q_{2}\left(x-\vec{v}t\right)$\textup{
	}scatters to the free wave.
\end{thm}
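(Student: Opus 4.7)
The plan is to adapt the contraction argument from the proof of Theorem \ref{thm:Stability}, modified to accommodate the two unstable directions of the linearized charge transfer Hamiltonian coming from the negative eigenvalues $-\lambda^{2}$ and $-\mu^{2}$ of $L_{Q_{1}}$ and $L_{Q_{2}}$. First I would set $h := u - Q_{1}(x) - Q_{2}(x-\vec{v}t)$ and perform the splitting
\begin{equation}
h(x,t) = \alpha(t)\, w(x) + \beta(t)\, m(x-\vec{v}t) + h^{\perp}(x,t),
\end{equation}
where $h^{\perp}$ is taken orthogonal (in the bilinear pairing adapted to the Hamiltonian form \eqref{eq:bigU}) to the two exponentially localized unstable eigenfunctions attached to the two moving solitons. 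Since $w$ and $m$ decay exponentially by Agmon's estimate, this projection is bounded on $\dot{H}^{1}\times L^{2}$, and the equation for $h$ splits into a PDE for $h^{\perp}$ with the same inhomogeneous and nonlinear structure as in Section \ref{sec:Construction}, up to error terms that are exponentially localized in $x$ near the two soliton centers, plus a coupled pair of scalar ODEs for $\alpha$ and $\beta$.

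Next I would treat the PDE for $h^{\perp}$ by running exactly the iteration scheme used in the proof of Theorem \ref{thm:Stability} in the same space $S$ defined in \eqref{eq:Sspace}. The linear theory in the Appendix (from \cite{GC3}) applies verbatim on the stable subspace of the charge transfer Hamiltonian, so the Strichartz, endpoint reversed Strichartz, and local decay estimates feeding into \eqref{eq:differ}--\eqref{eq:bound2} remain available. The extra terms generated by the projection onto the unstable subspace, as well as all cross terms of the schematic form $w^{a}\,m(\cdot-\vec{v}t)^{b}\,(h^{\perp})^{c}$ that appear in the nonlinearity, sit comfortably in the strong interaction spaces $I$ and $D$ of \eqref{eq:Ispace-1}--\eqref{eq:Dspace} because of the exponential decay of $w$ and $m$.

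For the two unstable modes, projecting the $h$ equation gives
\begin{align}
\alpha''(t) - \lambda^{2}\,\alpha(t) &= \mathcal{N}_{1}(\alpha,\beta,h^{\perp};t),\\
\beta''(t) - \mu^{2}\,\beta(t) &= \mathcal{N}_{2}(\alpha,\beta,h^{\perp};t),
\end{align}
where the right-hand sides come from projecting the nonlinearity $N(h,x,t)$, the cross terms $a(x,t)h$, $h^{5}$, and the source $F_{1}+F_{2}+F$ onto the two unstable directions. Each ODE has a one-dimensional stable and a one-dimensional unstable manifold; to produce a bounded-and-decaying $(\alpha,\beta)$ I would impose the Lyapunov--Perron formulas
\begin{equation}
\alpha(t) = -\frac{1}{2\lambda}\int_{t}^{\infty} e^{-\lambda(s-t)}\, \mathcal{N}_{1}(s)\,ds,\qquad \beta(t) = -\frac{1}{2\mu}\int_{t}^{\infty} e^{-\mu(s-t)}\, \mathcal{N}_{2}(s)\,ds,
\end{equation}
which automatically kill the growing modes at $+\infty$ and correspond to exactly one scalar constraint on the initial data for each ODE, hence to the codimension $1+1$ manifold $\mathcal{M}$. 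I would then close a joint Banach contraction on the triple $(\alpha,\beta,h^{\perp})$ in a product space built from weighted continuous functions on $[t_{0},\infty)$ and the space $S$: the contraction constants are controlled by powers of $\epsilon$ and by $\|a\|_{L^{5/4}_{t}L^{5/2}_{x}}$, both of which are made as small as needed by choosing $t_{0}$ large exactly as in Theorem \ref{thm:Stability}. Once $h\in S$ is obtained, Theorem \ref{thm:scattering} in the Appendix delivers the scattering conclusion, and the smoothness of $\mathcal{M}$ follows from the smooth dependence of the fixed point on its free parameters.

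The main obstacle is reconciling the exponential time scales $e^{\pm\lambda t}, e^{\pm\mu t}$ governing $(\alpha,\beta)$ with the polynomial-in-$t$ decay (e.g.\ the $1/\sqrt{t}$ rate of Theorem \ref{thm:existence}) that is natural for $h^{\perp}$ in the Strichartz and local decay norms. One must verify that the cross interactions between $\alpha w$, $\beta m(x-\vec{v}t)$, and $h^{\perp}$ produced by the quintic nonlinearity do not destroy either the Strichartz-type bounds on $h^{\perp}$ or the integrability at infinity of $\mathcal{N}_{1},\mathcal{N}_{2}$. The exponential localization of $w,m$ from Agmon, combined with the moving-frame local decay and reversed Strichartz estimates of \cite{GC3,GC2} used throughout Section \ref{sec:Construction}, is precisely what makes both estimates compatible and lets the whole coupled system be closed by the Banach fixed point theorem.
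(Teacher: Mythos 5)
Your proposal follows essentially the same route as the paper: decompose $h$ into the two unstable bound-state amplitudes plus a dispersive remainder, kill the exponentially growing mode of each second-order ODE by a stable-manifold (Lyapunov--Perron) condition --- one scalar constraint per mode, whence the codimension $1+1$ manifold --- and close a joint Banach contraction in the space $S$ using the scattering-state linear estimates of the Appendix.

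Two details need repair. First, the mode attached to the moving soliton cannot be written as $\beta(t)\,m(x-\vec{v}t)$ with $\beta$ a function of lab time: since the d'Alembertian does not commute with Galilean translation, this ansatz produces first-order cross terms $\dot\beta\,\vec{v}\cdot\nabla m$ and does not yield a constant-coefficient ODE. The paper instead uses the Lorentz-adapted ansatz $b\bigl(\gamma(t-vx_{1})\bigr)m_{v}(x,t)$ with $m_{v}(x,t)=m\bigl(\gamma(x_{1}-vt),x_{2},x_{3}\bigr)$, so that the unstable rate for the second mode is $\sqrt{1-|v|^{2}}\,\mu$, not $\mu$; your argument should be phrased in that frame. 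Second, the formula $\alpha(t)=-\frac{1}{2\lambda}\int_{t}^{\infty}e^{-\lambda(s-t)}\mathcal{N}_{1}(s)\,ds$ is not a solution of $\alpha''-\lambda^{2}\alpha=\mathcal{N}_{1}$; the decaying solutions are $Ce^{-\lambda(t-t_{0})}$ plus the convolution with the Green's function $-\frac{1}{2\lambda}e^{-\lambda|t-s|}$, or equivalently (as in the paper) one imposes the vanishing of the coefficient of $e^{\lambda t}$, i.e.\ $a(t_{0})+\frac{1}{\lambda}\dot a(t_{0})+\frac{1}{\lambda}\int_{t_{0}}^{\infty}e^{-\lambda s}N(s)\,ds=0$, which for fixed $a(t_{0})$ determines $\dot a(t_{0})$ and must itself be updated at each step of the iteration. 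With these corrections your scheme coincides with the paper's.
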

\begin{proof}
	 As in the stable case, by construction,
	we have 
	\begin{align}
		\partial_{tt}h-\Delta h+h^{5}+\left(V_{1}(x)+5Q_{1}^{4}(x)\right)h+\left(V_{2}\left(x-\vec{v}t\right)+5Q_{2}^{4}\left(x-\vec{v}t\right)\right)h+a(x,t)h\nonumber \\
		=F_{1}(x,t)+F_{2}(x,t)+F(x,t)+N(h,x,t)
	\end{align}
	Again, we consider the evolution starting from $t_{0}$ where $t_{0}$
	is large enough and only depends on prescribed constants. When dealing
	with unstable solitons, we need to make sure the evolution under the
	iteration is a scattering in each iterated step. So we need to modify
	the data after each iteration. 
	
	As in the stable case, we consider the iteration:
	\begin{align}
		\partial_{tt}h_{i+1}-\Delta h_{i+1}+\left(V_{1}(x)+5Q_{1}^{4}(x)\right)h_{i+1}+\left(V_{2}\left(x-\vec{v}t\right)+5Q_{2}^{4}\left(x-\vec{v}t\right)\right)h_{i+1}\nonumber \\
		=F_{1}(x,t)+F_{2}(x,t)+F(x,t)+N(h_{i},x,t)-a(x,t)h_{i}-h_{i}^{5}\label{eq:iterate}
	\end{align}
	and 
	\begin{equation}
	h_{-1}\equiv0.
	\end{equation}
	Denote 
	\begin{equation}
	\mathbf{\mathrm{V}_{i}}=V_{i}(x)+5Q_{i}^{4}(x),
	\end{equation}
	\begin{equation}
	\mathrm{D}(h_{i},x,t)=N(h_{i},x,t)-a(x,t)h_{i}-h_{i}^{5}
	\end{equation}
	and 
	\begin{equation}
	\mathrm{F}(h_{i},x,t)=F_{1}(x,t)+F_{2}(x,t)+F(x,t)+\mathrm{D}(h_{i},x,t).
	\end{equation}
	Decompose $h_{i}$ into three pieces:
	\begin{equation}
	h_{i}(x,t)=a_{i}(t)w(x)+b_{i}\left(\gamma(t-vx_{1})\right)m_{v}\left(x,t\right)+r_{i}(x,t)\label{eq:evolution}
	\end{equation}
	where
	\begin{equation}
	m_{v}(x,t)=m\left(\gamma\left(x_{1}-\vec{v}t\right),x_{2},x_{3}\right).
	\end{equation}
	We notice that 
	\begin{equation}
	P_{c}\left(H_{1}\right)r_{i}=r_{i}
	\end{equation}
	and 
	\begin{equation}
	P_{c}\left(H_{2}\right)\left(r_{i}\right)_{L}=\left(r_{i}\right)_{L}
	\end{equation}
	where the Lorentz transformation $L$ makes $\mathbf{\mathrm{V}}_{2}$ stationary. 
	
	Under the iteration, for the initial data, we impose that for $i\geq1$.
	\begin{equation}
	a(t_{0})=a_{i}(t_{0})=a_{0}(t_{0}),
	\end{equation}
	\begin{equation}
	b\left(\sqrt{1-\left|v\right|^{2}}t_{0}\right)=b_{i}\left(\sqrt{1-\left|v\right|^{2}}t_{0}\right)=b_{0}\left(\sqrt{1-\left|v\right|^{2}}t_{0}\right)
	\end{equation}
	and 
	\begin{equation}
	r(x,t_{0})=r_{i}\left(x,t_{0}\right)=r_{0}\left(x,t_{0}\right).
	\end{equation}
	We first analyze the behavior of the bound states as in \cite{GC3}.
	Plugging the evolution \eqref{eq:evolution} into the equation \eqref{eq:iterate}
	and taking inner product with $w$, we get 
	\[
	\ddot{a}_{i}(t)-\lambda^{2}a_{i}(t)+\left\langle \mathrm{V}_{2}\left(x-\vec{v}t\right)h_{i},w\right\rangle =\left\langle \mathrm{F}(h_{i},x,t),w\right\rangle 
	\]
	Denote 
	\begin{equation}
	N_{i}(t):=\left\langle \mathrm{F}(h_{i},x,t),w\right\rangle -\left\langle \mathrm{V}_{2}\left(x-\vec{v}t\right)h_{i},w\right\rangle .
	\end{equation}
	Then
	\begin{equation}
	a_{i}(t)=\frac{e^{\lambda t}}{2}\left[a_{i}(0)+\frac{1}{\lambda}\dot{a}_{i}(0)+\frac{1}{\lambda}\int_{t_{0}}^{t}e^{-\lambda s}N_{i}(s)\,ds\right]+R(t)
	\end{equation}
	where 
	\begin{equation}
	\left|R(t)\right|\lesssim e^{-\beta t},
	\end{equation}
	for some positive constant $\beta>0$. Therefore, the stability condition
	from scattering conditions in the sense of Definition \ref{AO} forces
	\begin{equation}
	a_{i}(t_{0})+\frac{1}{\lambda}\dot{a}_{i}(t_{0})+\frac{1}{\lambda}\int_{t_{0}}^{\infty}e^{-\lambda s}N_{i}(s)\,ds=0.\label{eq:stability}
	\end{equation}
	So as the discussion in \cite{GC3}, given $a(t_{0})$, there is a
	unique $\dot{a}(t_{0})$ such that the stability condition \eqref{eq:stability}
	is satisfied. Similar results hold for $b_{0}\left(t\right)$ up to
	performing a Lorentz transformation. These stability conditions will
	ensure that $h_{i}$ is a scattering state. Therefore, we can employ
	the estimates from Theorem \ref{thm:StriCharB}, Theorem \ref{thm:EnergyCharge}
	and Theorem \ref{thm:EndRStriCB} as in the proof of Theorem \ref{thm:Stability}.
	
	Consider the iteration for $\dot{a}_{i}(t_{0})$,
	\begin{align}
		\dot{a}_{i+1}(t_{0})-\dot{a}_{j+1}(t_{0}) & =-\int_{t_{0}}^{\infty}e^{-\lambda s}\left(N_{i+1}(s)-N_{j+1}(s)\right)\,ds.
	\end{align}
	Note that 
	\begin{align}
		\left|N_{i+1}(s)-N_{j+1}(s)\right| & \lesssim\left|\left\langle \mathrm{V}_{2}\left(x-\vec{v}t\right)\left(h_{i+1}-h_{j+1}\right),w\right\rangle \right|\nonumber \\
		& +\left|\left\langle \mathrm{D}(h_{i},x,t)-\mathrm{D}(h_{j},x,t),w\right\rangle \right|.
	\end{align}
	Then for $1\leq p\leq2$, by Minkowski's inequality and H\"older's
	inequality, 
	\begin{align}
		\left\Vert \left|\left\langle \mathrm{V}_{2}\left(x-\vec{v}t\right)\left(h_{i+1}-h_{j+1}\right),w\right\rangle \right|\right\Vert _{L_{t}^{p}[t_{0},\infty)}\qquad\nonumber \\
		\lesssim\left|\left\langle \left\Vert \left|\mathrm{V}_{2}\left(x-\vec{v}t\right)\left(h_{i+1}-h_{j+1}\right)\right|\right\Vert _{L_{t}^{p}[t_{0},\infty)},w\right\rangle \right|\nonumber \\
		\lesssim\frac{1}{\left\langle t_{0}\right\rangle }\left\Vert h_{i+1}-h_{j+1}\right\Vert _{L_{x}^{\infty}L_{t}^{2}[t_{0},\infty)}.\label{eq:differ1}
	\end{align}
	To estimate the difference between $\mathrm{D}(h_{i},x,t)$ and $\mathrm{D}(h_{j},x,t)$,
	we do the same computations as in the stable solitons case, see \eqref{eq:differ},
	\begin{align}
		\left|\left\langle \mathrm{D}(h_{i},x,t)-\mathrm{D}(h_{j},x,t),w\right\rangle \right|_{L_{t}^{p}[t_{0},\infty)} & \lesssim\left\Vert h_{i}-h_{j}\right\Vert _{S}.\label{eq:differ2}
	\end{align}
	Then combine \eqref{eq:differ1} and \eqref{eq:differ2} together, one
	has
	\begin{align}
		\left|\dot{a}_{i+1}(t_{0})-\dot{a}_{j+1}(t_{0})\right| & \lesssim e^{-\lambda t_{0}}\left(\int_{t_{0}}^{\infty}\left|N_{i+1}(s)-N_{j+1}(s)\right|^{2}ds\right)^{\frac{1}{2}}\nonumber \\
		& \lesssim e^{-\lambda t_{0}}\left(\frac{1}{\left\langle t_{0}\right\rangle }\left\Vert h_{i+1}-h_{j+1}\right\Vert _{S}+\left\Vert h_{i}-h_{j}\right\Vert _{S}\right)
	\end{align}
	Similarly, 
	\begin{align}
		\left|\dot{b}_{i+1}\left(\sqrt{1-\left|v\right|^{2}}t_{0}\right)-\dot{b}_{j+1}\left(\sqrt{1-\left|v\right|^{2}}t_{0}\right)\right|\qquad\qquad\nonumber \\
		\lesssim e^{-\sqrt{1-\left|v\right|^{2}}\mu t_{0}}\left(\frac{1}{\left\langle t_{0}\right\rangle }\left\Vert h_{i+1}-h_{j+1}\right\Vert _{S}+\left\Vert h_{i}-h_{j}\right\Vert _{S}\right).
	\end{align}
	with 
	\begin{equation}
	\left\Vert h_{0}(t_{0})\right\Vert _{\dot{H}^{1}\times L^{2}}\ll\epsilon
	\end{equation}
	Then by our Strichartz estimates 
	\begin{equation}
	\left\Vert h_{0}\right\Vert _{S}\lesssim\epsilon\ll1.
	\end{equation}
	Next we consider the estimate in our iteration scheme as \eqref{eq:differ},
	\eqref{eq:bound1} and \eqref{eq:bound2}. It suffices to estimate:
	\begin{align}
		\left\Vert h_{i+1}-h_{j+1}\right\Vert _{S} & \leq\left|\dot{a}_{i+1}(t_{0})-\dot{a}_{j+1}(t_{0})\right|\nonumber \\
		& +\left|\dot{b}_{j+1}\left(\sqrt{1-\left|v\right|^{2}}t_{0}\right)-\dot{b}_{i+1}\left(\sqrt{1-\left|v\right|^{2}}t_{0}\right)\right|\nonumber \\
		& +\frac{1}{4}\left\Vert h_{i}-h_{j}\right\Vert _{S}
	\end{align}
	Therefore as the stable case, \eqref{eq:bound1} and \eqref{eq:bound2},
	we haves
	\begin{equation}
	\left\Vert h_{i+1}-h_{i}\right\Vert _{S}\leq\frac{1}{2}\left\Vert h_{i}-h_{i-1}\right\Vert _{S}
	\end{equation}
	and
	\begin{equation}
	\left|\dot{a}_{i+1}(t_{0})-\dot{a}_{j+1}(t_{0})\right|+\left|\dot{b}_{i+1}\left(\sqrt{1-\left|v\right|^{2}}t_{0}\right)-\dot{b}_{j+1}\left(\sqrt{1-\left|v\right|^{2}}t_{0}\right)\right|\leq\frac{1}{8}\left\Vert h_{i}-h_{j}\right\Vert _{S}
	\end{equation}
	Therefore by the Banach fixed-point theorem, there exist $h$, $\dot{a}(t_{0})$
	and $\dot{b}\left(\sqrt{1-\left|v\right|^{2}}t_{0}\right)$ such that
	\begin{equation}
	\left\Vert h_{i}-h\right\Vert _{S}\rightarrow0,
	\end{equation}
	\begin{equation}
	\,\left|\dot{a}_{i}(t_{0})-\dot{a}(t_{0})\right|\rightarrow0,
\end{equation}
and
\begin{equation}
	\left|\dot{b}_{i}\left(\sqrt{1-\left|v\right|^{2}}t_{0}\right)-\dot{b}\left(\sqrt{1-\left|v\right|^{2}}t_{0}\right)\right|\rightarrow0
	\end{equation}
as $i\rightarrow\infty$.

	Moreover,
	\begin{equation}
	a(t_{0})+\frac{1}{\lambda}\dot{a}(t_{0})+\frac{1}{\lambda}\int_{t_{0}}^{\infty}e^{-\lambda s}N(s)\,ds=0
	\end{equation}
	where 
	\begin{equation}
	N(t):=\left\langle \mathrm{F}(h,x,t),w\right\rangle -\left\langle \mathrm{V}_{2}\left(x-\vec{v}t\right)h,w\right\rangle ,
	\end{equation}
	the same condition holds for $b(t)$. 
	
	It follows that $h$ is scattering state and satisfies
	\begin{align}
		\partial_{tt}h-\Delta h+\left(V_{1}(x)+5Q_{1}^{4}(x)\right)h+\left(V_{2}\left(x-\vec{v}t\right)+5Q_{2}^{4}\left(x-\vec{v}t\right)\right)h\nonumber \\
		=F_{1}(x,t)+F_{2}(x,t)+F(x,t)+N(h,x,t)-a(x,t)h-h^{5},
	\end{align}
	and 
	\begin{equation}
	\left\Vert h\right\Vert _{S}\lesssim\epsilon.
	\end{equation}
   Hence 
	\begin{equation}
	u(t)-Q_{1}\left(x\right)-Q_{2}\left(x-\vec{v}t\right)
	\end{equation}
	scatters to free wave.
	
		Notice that the above construction depends on the data smoothly.
\end{proof}
\begin{rem}
	\label{rem:general1}We can also  consider the most general case. Suppose that 
	\begin{equation}
	L_{Q_{1}}=-\Delta+V_{1}+5Q_{1}^{4}
	\end{equation}
	has $k_{1}$ negative eigenvalues and zero is neither an eigenvalue
	nor resonance. Also suppose 
	\begin{equation}
	L_{Q_{2}}=-\Delta+V_{2}^{v}+5\left(Q_{2}^{v}\right)^{4}
	\end{equation}
	has $k_{2}$ negative eigenvalues and zero is neither an eigenvalue
	nor resonance.
	
	Then by similar arguments as above, we can obtain the general conditional
	stability results: Suppose that $0<\epsilon\ll1$ is small enough
	and $1\ll t_{0}$ is large enough. There is a codimension $k_{1}+k_{2}$
	smooth manifold $\mathbb{\mathcal{M}}\in\dot{H}\left(\mathbb{R}^{3}\right)\times L^{2}\left(\mathbb{R}^{3}\right)$
	around the $\epsilon$ neighborhood around $Q[t_{0}]$ such that if
	$u\in\mathcal{M}$ solve
	\begin{equation}
	\partial_{tt}u-\Delta u+V_{1}(x)u+V_{2}\left(x-\vec{v}t\right)u+u^{5}=0,
	\end{equation}
	and 
	\begin{equation}
	\left\Vert U[t_{0}]-Q[t_{0}]\right\Vert _{\dot{H}^{1}\times L^{2}}\leq\epsilon.
	\end{equation}
	Then there exists free data
	\begin{equation}
	U_{0}[0]=\left(f_{0},g_{0}\right)^{t}\in\dot{H}^{1}\times L^{2}
	\end{equation}
	such that
	\begin{equation}
	\left\Vert U[t]-Q[t]-e^{tJH_{F}}U_{0}[0]\right\Vert _{\dot{H}^{1}\times L^{2}}\rightarrow0,\,\,t\rightarrow\infty.
	\end{equation}
	In other words, the error $u(t)-Q_{1}\left(x\right)-Q_{2}\left(x-\vec{v}t\right)$
	scatters to the free wave.
\end{rem}
We also have the existence of the purely-soliton solution with unstable excited states.
\begin{thm}
	In $\mathbb{R}^{3}$, there exists a solution $u$ to 
	\begin{equation}
	\partial_{tt}u-\Delta u+V_{1}(x)u+V_{2}\left(x-\vec{v}t\right)u+u^{5}=0\label{eq:eqtwo-1}
	\end{equation}
	such that 
	\[
	\lim_{t\rightarrow\infty}\left\Vert U[t]-Q[t]\right\Vert _{\dot{H}^{1}\times L^{2}} = 0.
	\]
\end{thm}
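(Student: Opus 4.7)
The plan is to combine the backward-from-infinity iteration used in the proof of Theorem \ref{thm:existence} with the modulation/stability-condition machinery introduced for Theorem \ref{thm:condstab}. Set $h(t):=u(t)-Q_{1}(x)-Q_{2}(x-\vec{v}t)$, so that $h$ satisfies the same nonlinear equation as in the stable case but with the linearized potentials $V_{j}+5Q_{j}^{4}$ in place of $V_{j}+5W_{j}^{4}$, i.e.
\begin{align*}
\partial_{tt}h-\Delta h+\left(V_{1}+5Q_{1}^{4}\right)h+\left(V_{2}^{v}+5(Q_{2}^{v})^{4}\right)h = \mathrm{F}(h,x,t),
\end{align*}
where $\mathrm{F}$ collects $F_{1},F_{2},F,N(h),-a(x,t)h,-h^{5}$ exactly as before. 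I will construct $h$ on $[t_{0},\infty)$ for $t_{0}\gg 1$ and then extend backwards by the local well-posedness.

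First I would set up the same backward iteration as in Theorem \ref{thm:existence}: with $h_{-1}\equiv 0$, define $h_{i+1}$ by the Duhamel-from-infinity formula for the charge-transfer linear flow applied to $\mathrm{F}(h_{i},\cdot,s)$. Because $L_{Q_{1}}$ and $L_{Q_{2}}$ now each carry a negative eigenvalue with eigenfunction $w$, $m$ (and Lorentz-boosted $m_{v}$), the linear estimates from the Appendix only apply to genuinely scattering data. So, as in the proof of Theorem \ref{thm:condstab}, I decompose
\begin{equation*}
h_{i}(x,t)=a_{i}(t)w(x)+b_{i}\!\left(\gamma(t-vx_{1})\right)m_{v}(x,t)+r_{i}(x,t),
\end{equation*}
with $r_{i}$ in the continuous spectral subspace of both $H_{1}$ and the Lorentz-transformed $H_{2}$. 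Projecting the equation for $h_{i+1}$ onto $w$ and $m_{v}$ produces the scalar ODEs $\ddot{a}_{i+1}-\lambda^{2}a_{i+1}=N_{i}(t)$ and its analogue for $b_{i+1}$. Requiring $h_{i+1}\to 0$ in energy as $t\to\infty$ forces the unstable coefficients to be chosen according to the stability conditions
\begin{equation*}
a_{i+1}(t_{0})+\tfrac{1}{\lambda}\dot{a}_{i+1}(t_{0})+\tfrac{1}{\lambda}\!\int_{t_{0}}^{\infty}\!e^{-\lambda s}N_{i}(s)\,ds=0,
\end{equation*}
and similarly for $b_{i+1}$. Since we are constructing (not perturbing) the solution, we are free to prescribe $a_{i+1}(t_{0})=0$, $b_{i+1}(\sqrt{1-|v|^{2}}t_{0})=0$ and solve uniquely for $\dot{a}_{i+1}(t_{0})$, $\dot{b}_{i+1}$. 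Once these choices are made, $h_{i+1}$ is genuinely scattering and the charge-transfer Strichartz, endpoint reversed Strichartz, and local-decay estimates of Theorems \ref{thm:StriCharB}, \ref{thm:EndRStriCB}, \ref{thm:EnergyCharge} apply to the $r_{i+1}$ component.

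Next I would run the contraction in the norm $\|\cdot\|_{S_{t_{1}}}$ introduced in the proof of Theorem \ref{thm:existence}, for $t_{1}\geq t_{0}$ sufficiently large. The base step $\|h_{0}\|_{S_{t_{1}}}\lesssim t_{1}^{-1/2}$ is obtained exactly as in the stable case, using that $F_{1},F_{2}$ lie in the strong-interaction space $I_{t_{1}}$ with norm $O(t_{1}^{-1/2})$ and that $F\in L_{t}^{1}L_{x}^{2}$, together with Lemma \ref{lem:energyL} and Remark \ref{rem:eg}. The difference estimate $\|h_{i+1}-h_{j+1}\|_{S_{t_{1}}}\leq\tfrac12 \|h_{i}-h_{j}\|_{S_{t_{1}}}$ follows by combining the nonlinear difference bound \eqref{eq:differ-1} of the stable case with the ODE-difference estimates
\begin{equation*}
|\dot{a}_{i+1}(t_{0})-\dot{a}_{j+1}(t_{0})|+|\dot{b}_{i+1}-\dot{b}_{j+1}|\lesssim e^{-\lambda t_{0}}\left(\tfrac{1}{\langle t_{0}\rangle}\|h_{i+1}-h_{j+1}\|_{S_{t_{1}}}+\|h_{i}-h_{j}\|_{S_{t_{1}}}\right)
\end{equation*}
proved exactly as in Theorem \ref{thm:condstab}. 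Taking $t_{0}$ large, both prefactors $e^{-\lambda t_{0}}$ and $t_{0}^{-1/2}$ can be made as small as needed, and the Banach fixed point theorem yields a limit $h$ with $\|h(t)\|_{\dot{H}^{1}\times L^{2}}\lesssim t^{-1/2}\to 0$.

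The main obstacle is the one already flagged above: unlike the stable case, the linear charge-transfer estimates are unavailable for a generic profile, and naively iterating would let the unstable coefficients $a_{i+1}$ and $b_{i+1}$ grow like $e^{\lambda t}$. The resolution is to let $\dot{a}_{i+1}(t_{0})$ and $\dot{b}_{i+1}$ be the unknowns adjusted at each stage so that the stability condition holds; one must then show that this extra adjustment does not destroy the contraction. Because the corrections are controlled by the exponentially small factor $e^{-\lambda t_{0}}$ together with the norm of $h_{i}-h_{j}$ itself, they can be absorbed into the contraction constant for $t_{0}$ large, yielding simultaneously a fixed point $h$ and convergent sequences $\dot{a}_{i}(t_{0})\to\dot{a}(t_{0})$, $\dot{b}_{i}\to\dot{b}$ at which the stability condition for the limit holds. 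Consequently $u=Q_{1}+Q_{2}(\cdot-\vec{v}t)+h$ is the desired pure two-soliton solution with $\|U[t]-Q[t]\|_{\dot H^{1}\times L^{2}}\to 0$ as $t\to\infty$.
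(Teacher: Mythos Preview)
Your approach differs fundamentally from the paper's. The paper does \emph{not} run a backward-from-infinity Banach iteration in the unstable case; instead it uses a weak convergence/compactness argument in the style of Merle and Martel. Concretely: for a sequence $t_{n}\to\infty$, one constructs (via the forward machinery of Theorem~\ref{thm:condstab} applied with initial time $t_{n}$) a scattering state $h_{n}$ with $\|h_{n}(t_{n})\|_{\dot H^{1}\times L^{2}}\lesssim t_{n}^{-1/2}$, proves the uniform bound $\|h_{n}(t)\|_{\dot H^{1}\times L^{2}}\lesssim t^{-1/2}$ on $[t_{0},t_{n}]$ with constant independent of $n$, and then passes to a weak limit $h_{n}(t_{0})\rightharpoonup h_{0}$ using weak continuity of the nonlinear flow. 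The paper explicitly flags this choice, writing that ``in order to deal with bound states, here we need more complicated arguments'' and invoking the weak-convergence technique.

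There is also a genuine tension in your exposition that you should resolve. You say $h_{i+1}$ is defined by ``the Duhamel-from-infinity formula for the charge-transfer linear flow'', and then claim freedom to ``prescribe $a_{i+1}(t_{0})=0$''. But the naive backward integral $\int_{t}^{\infty}U(t,s)\mathrm{F}(h_{i},\cdot,s)\,ds$ does not converge: the forward-stable mode $e^{-\lambda t}$ becomes backward-unstable, so there is no single well-defined formula fixing $h_{i+1}$ from infinity. What you presumably intend is to integrate the forward-growing mode and the continuous-spectrum part backward from infinity, while integrating the forward-decaying mode forward from $t_{0}$ with its free constant pinned by $a_{i+1}(t_{0})=0$. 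Spelled out this way the scheme is coherent and could in principle close as a contraction, yielding a more constructive proof than the paper's (with uniqueness of the fixed point for free). The paper's route is shorter---it recycles Theorem~\ref{thm:condstab} as a black box and needs only weak continuity of the flow---but it forfeits the direct fixed-point structure and any uniqueness statement.
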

In order to deal with bound states, here we need more complicated
arguments. We will follow an idea based on the weak convergence from
Merle \cite{Merle} and Martel \cite{Mart} which are also used in
many other constructions of multisoltions, for example in \cite{MM,JJ1,JJ2,CM,CM1}. 
\begin{proof}
	We still take $t_{0}$ large enough as before. Taking a sequence $t_{n}\rightarrow\infty$.
	Consider the equation for $h$: 
	\begin{align}
		\partial_{tt}h-\Delta h+h^{5}+\left(V_{1}(x)+5Q_{1}^{4}(x)\right)h+\left(V_{2}\left(x-\vec{v}t\right)+5Q_{2}^{4}\left(x-\vec{v}t\right)\right)h\nonumber \\
		=F_{1}(x,t)+F_{2}(x,t)+F(x,t)+N(h,x,t)+a(x,t)h\label{eq:equationh}
	\end{align}
	We can construct a scattering state $h_{n}$ to equation \eqref{eq:equationh}
	as in the proof of Theorem \ref{thm:condstab} such that 
	\begin{equation}
	\left\Vert h_{n}\left(t_{n}\right)\right\Vert _{\dot{H}^{1}\times L^{2}}\lesssim\frac{1}{\sqrt{t_{n}}}.
	\end{equation}
	Moreover, by the estimates \eqref{eq:RStrichartz} and \eqref{eq:RStrichartzS},
	we have 
	\begin{align}
		\sup_{x\in\mathbb{R}^{3}}\left(\int_{t}^{t_{n}}\left|h_{n}(x,t)\right|^{2}dt\right)^{\frac{1}{2}} & \lesssim\frac{1}{\sqrt{t_{n}}}+\frac{1}{\sqrt{t}},
	\end{align}
	and 
	\begin{align}
		\sup_{x\in\mathbb{R}^{3}}\left(\int_{t}^{t_{n}}\left|h_{n}(x+\vec{v}t,t)\right|^{2}dt\right)^{\frac{1}{2}} & \lesssim\frac{1}{\sqrt{t_{n}}}+\frac{1}{\sqrt{t}}.
	\end{align}
	Furthermore, by a similar argument to the proof of Theorem \ref{thm:existence},
	Lemma \ref{lem:energyL} and Remark \ref{rem:eg}, we can conclude
	that 
	\begin{equation}
	\left\Vert h_{n}(t)\right\Vert _{\dot{H}^{1}\times L^{2}}\lesssim\frac{1}{\sqrt{t_{n}}}+\frac{1}{\sqrt{t}}.
	\end{equation}
	Notice that over $\left[t_{0},t_{n}\right]$, 
	\begin{equation}
	\left\Vert h_{n}(t)\right\Vert _{\dot{H}^{1}\times L^{2}}\lesssim\frac{1}{\sqrt{t_{n}}}+\frac{1}{\sqrt{t}}\lesssim\frac{1}{\sqrt{t}}\label{eq:decayrate}
	\end{equation}
	with a constant independent of $n$.
	
	Then up to passing to a subsequence 
	\begin{equation}
	h_{n}(t_{0})\rightharpoonup h_{0}\in\dot{H}^{1}\times L^{2}
	\end{equation}
	weakly. Let $h$ be a solution of the equation \eqref{eq:equationh}
	with $h_{0}$ as the initial data at $t=t_{0}$. By the weak continuity
	of the flow, for example in \cite{BH,JJ1,JLX}, one can obtain that
	$h$ exists on the time interval from $[t_{0},\infty)$ and for $t\in[t_{0},\infty)$,
	\begin{equation}
	h_{n}(t)\rightharpoonup h(t)\in\dot{H}^{1}\times L^{2}.
	\end{equation}
	Then passing to the weak limit in \eqref{eq:decayrate}, one has 
	\begin{equation}
	\left\Vert h(t)\right\Vert _{\dot{H}^{1}\times L^{2}}\lesssim\frac{1}{\sqrt{t}}.
	\end{equation}
Therefore, we conclude that if we write 
	\begin{equation}
	U[t]=\left(u,u_{t}\right)^{t},\,Q[t]=\left(Q_{1}\left(x\right)-Q_{2}\left(x-\vec{v}t\right),\,\partial_{t}\left(Q_{1}\left(x\right)-Q_{2}\left(x-\vec{v}t\right)\right)\right)^{t},
	\end{equation}
	there exists a solution $u$ to 
	\begin{equation}
	\partial_{tt}u-\Delta u+V_{1}\left(x\right)u+V_{2}\left(x-\vec{v}t\right)u+u^{5}=0,
	\end{equation}
	such that 
	\begin{equation}
	\lim_{t\rightarrow\infty}\left\Vert U[t]-Q[t]\right\Vert _{\dot{H}^{1}\times L^{2}}\rightarrow0.
	\end{equation}
	Moreover, we have the decay rate 
	\begin{equation}
	\left\Vert U[t]-Q[t]\right\Vert _{\dot{H}^{1}\times L^{2}}\lesssim\frac{1}{\sqrt{t}}.
	\end{equation}
	We are done.
\end{proof}
\begin{rem}
	As in Remark \ref{rem:general1}, the above construction holds for the general
	case.
\end{rem}

\section{Appendix: Linear Theory}

In this Appendix, we recall the results from Chen \cite{GC3} on wave
equations with a charge transfer Hamiltonian in $\mathbb{R}^{3}$.
In order to handle the strong interaction of solitons in our nonlinear
application, we also need some refined version of inhomogeneous reversed
Strichartz estimates. 

\subsection{Charge transfer model}

Before we give the precise definition of our model, it is necessary
to introduce Lorentz transformations. Given a vector $\vec{\mu}\in\mathbb{R}^{3}$,
there is a Lorentz transformation $L\left(\vec{\mu}\right)$ acting
on $\left(x,t\right)\in\mathbb{R}^{3+1}$ such that it makes the moving
frame $\left(x-\vec{\mu}t,t\right)$ stationary. We can use a $4\times4$
matrix $B(\vec{\mu})$ to represent the transformation $L\left(\vec{\mu}\right)$.
Moreover, for the given vector $\vec{\mu}=(\mu_{1},\mu_{2},\mu_{3})\in\mathbb{R}^{3}$,
there is a $3\times4$ matrix $M\left(\vec{\mu}\right)$ such that
\begin{equation}
\left(x-\vec{\mu}t\right)^{T}=M\left(\vec{\mu}\right)\left(x,t\right)^{T},
\end{equation}
where the superscript $T$ denotes the transpose of a vector.

With the preparations above, we can set up our model. We consider
the scalar charge transfer model for wave equations in the following
sense:
\begin{defn}
	\label{def: Charge} By a wave equation with a charge transfer Hamiltonian
	we mean a wave equation 
	\begin{equation}
	\partial_{tt}u-\Delta u+\sum_{j=1}^{m}\mathbf{\mathrm{V}}_{j}\left(x-\vec{v}_{j}t\right)u=0,\label{eq:18}
	\end{equation}
	\[
	u|_{t=0}=f,\,\,\partial_{t}u|_{t=0}=g,\,\,x\in\mathbb{R}^{3},
	\]
	where $\vec{v}_{j}$'s are distinct vectors in $\mathbb{R}^{3}$ with
	\begin{equation}
	\left|\vec{v}_{i}\right|<1,\,1\leq i\leq m.
	\end{equation}
	and the real potentials $\mathbf{\mathrm{V}}_{j}$ are such that $\forall1\leq j\leq m$ 
	
	1) $\mathbf{\mathrm{V}}_{j}$ is time-independent and decays with rate $\left\langle x\right\rangle ^{-\alpha}$
	with $\alpha>3$
	
	2) $0$ is neither an eigenvalue nor a resonance of the operators
	\begin{equation}
	H_{j}=-\Delta+\mathbf{\mathrm{V}}_{j}\left(S\left(\vec{v}_{j}\right)x\right),\label{eq:19}
	\end{equation}
	where $S\left(\vec{v_{j}}\right)x=M\left(\vec{v}_{j}\right)B^{-1}\left(\vec{v}_{j}\right)\left(x,0\right)^{T}.$ 
\end{defn}
Recall that $\psi$ is a resonance at $0$ if it is a distributional
solution of the equation $H_{k}\psi=0$ which belongs to the space
$L^{2}\left(\left\langle x\right\rangle ^{-\sigma}dx\right):=\left\{ f:\,\left\langle x\right\rangle ^{-\sigma}f\in L^{2}\right\} $
for any $\sigma>\frac{1}{2}$, but not for $\sigma=\frac{1}{2}.$
\begin{rem*}
	The construction of $S\left(\vec{v}_{j}\right)$ is clear from the
	change between different frames under Lorentz transformations. In
	our concrete problem below \eqref{eq:2V}, $S\left(\vec{v}_{j}\right)$
	can be written down explicitly. 
\end{rem*}
To be consistent with our nonlinear application, throughout this section,
we discuss the wave equation with a charge transfer Hamiltonian in
the sense of Definition \ref{def: Charge} with $m=2$, a stationary
$\mathbf{\mathrm{V}}_{1}$ and a $\mathbf{\mathrm{V}}_{2}$ moving along $\overrightarrow{e_{1}}$ with
speed $\left|v\right|<1$, i.e., the velocity is 
\begin{equation}
\vec{v}=\left(v,0,0\right).
\end{equation}
Under this setting, by Definition \ref{def: Charge}, 
\begin{equation}
H_{1}=-\Delta+\mathbf{\mathrm{V}}_{1}(x),
\end{equation}
and 
\begin{equation}
\,H_{2}=-\Delta+\mathbf{\mathrm{V}}_{2}\left(\sqrt{1-\left|v\right|^{2}}x_{1},x_{2},x_{3}\right).\label{eq:2V}
\end{equation}

An indispensable tool we need to study the charge transfer model is
the Lorentz transformation. Again, we apply Lorentz transformations
$L$ with respect to a moving frame with speed $\left|v\right|<1$
along the $x_{1}$ direction. After we apply the Lorentz transformation
$L$, under the new coordinates, $\mathbf{\mathrm{V}}_{2}$ is stationary meanwhile
$\mathbf{\mathrm{V}}_{1}$ will be moving.

Writing down the Lorentz transformation $L$ explicitly, we have 
\begin{equation}
\begin{cases}
t'=\gamma\left(t-vx_{1}\right)\\
x_{1}'=\gamma\left(x_{1}-vt\right)\\
x_{2}'=x_{2}\\
x_{3}'=x_{3}
\end{cases}\label{eq:LorentzT}
\end{equation}
with 
\begin{equation}
\gamma=\frac{1}{\sqrt{1-\left|v\right|^{2}}}.
\end{equation}
We can also write down the inverse transformation of the above:
\begin{equation}
\begin{cases}
t=\gamma\left(t'+vx_{1}'\right)\\
x_{1}=\gamma\left(x_{1}'+vt'\right)\\
x_{2}=x_{2}'\\
x_{3}=x_{3}'
\end{cases}.\label{eq:InvLorentT}
\end{equation}
Under the Lorentz transformation $L$, if we use the subscript $L$
to denote a function with respect to the new coordinate $\left(x',t'\right)$,
we have 
\begin{equation}
u_{L}\left(x_{1}',x_{2}',x_{3}',t'\right)=u\left(\gamma\left(x_{1}'+vt'\right),x_{2}',x_{3}',\gamma\left(t'+vx_{1}'\right)\right)\label{eq:Lcoordinate}
\end{equation}
and 
\begin{equation}
u(x,t)=u_{L}\left(\gamma\left(x_{1}-vt\right),x_{2},x_{3},\gamma\left(t-vx_{1}\right)\right).\label{eq:ILcoordinate}
\end{equation}

\subsection{Strichartz estimates}

With the above preparations, we recall some important results from
Chen \cite{GC3}. Adapting the linear model to our nonlinear setting,
we consider the following problem.

Suppose $u$ solves
\begin{equation}
\partial_{tt}u-\Delta u+\mathbf{\mathrm{V}}_{1}(x)u+\mathbf{\mathrm{V}}_{2}(x-\vec{v} t)u=F+F_{1}+F_{2}\label{eq:chargeeq}
\end{equation}
with initial data 
\begin{equation}
u(x,0)=f(x),\,u_{t}(x,0)=g(x).
\end{equation}

Let $w_{1},\,\ldots,\,w_{m}$ and $m_{1},\,\ldots,\,m_{\ell}$ be
the normalized bound states of $H_{1}$ and $H_{2}$ associated with
the negative eigenvalues $-\lambda_{1}^{2},\,\ldots,\,-\lambda_{m}^{2}$
and $-\mu_{1}^{2},\,\ldots,\,-\mu_{\ell}^{2}$ respectively (notice
that by our assumptions, $0$ is not an eigenvalue). In other words,
we assume 
\begin{equation}
H_{1}w_{i}=-\lambda_{i}^{2}w_{i},\,\,\,w_{i}\in L^{2},\,\lambda_{i}>0.
\end{equation}

\begin{equation}
H_{2}m_{i}=-\mu_{i}^{2}m_{i},\,\,\,m_{i}\in L^{2},\,\mu_{i}>0.
\end{equation}
We denote by $P_{b}\left(H_{1}\right)$ and $P_{b}\left(H_{2}\right)$
the projections on the the bound states of $H_{1}$ and $H_{2}$,
respectively, and let $P_{c}\left(H_{i}\right)=Id-P_{b}\left(H_{i}\right),\,i=1,2$.
To be more explicit, we have 
\begin{equation}
P_{b}\left(H_{1}\right)=\sum_{i=1}^{m}\left\langle \cdot,w_{i}\right\rangle w_{i},\,\,\,\,\,P_{b}\left(H_{2}\right)=\sum_{j=1}^{\ell}\left\langle \cdot,m_{j}\right\rangle m_{j}.
\end{equation}
In order to study the equation with time-dependent potentials, we
need to introduce a suitable projection. Again, with Lorentz transformations
$L$ associated with the moving potential $\mathbf{\mathrm{V}}_{2}(x-\vec{v}t)$, we use the
subscript $L$ to denote a function under the new frame $\left(x',t'\right)$. 
\begin{defn}[Scattering states]
	\label{AO}Let 
	\begin{equation}
	\partial_{tt}u-\Delta u+\mathbf{\mathrm{V}}_{1}(x)u+\mathbf{\mathrm{V}}_{2}(x-\vec{v}t)u=F+F_{1}+F_{2},\label{eq:eqBSsec-1}
	\end{equation}
	with initial data
	\begin{equation}
	u(x,0)=f(x),\,u_{t}(x,0)=g(x).
	\end{equation}
	If $u$ also satisfies 
	\begin{equation}
	\left\Vert P_{b}\left(H_{1}\right)u(t)\right\Vert _{L_{x}^{2}}\rightarrow0,\,\,\left\Vert P_{b}\left(H_{2}\right)u_{L}(t')\right\Vert _{L_{x'}^{2}}\rightarrow0\,\,\,t,t'\rightarrow\infty,\label{eq:ao2-1}
	\end{equation}
	we call it a scattering state. 
\end{defn}
Define the space $I$ as
\begin{equation}
I=\left\{ G(x,t)\in L_{x}^{\frac{3}{2},1}L_{t}^{2}\bigcap L_{x_{1}}^{1}L_{\widehat{x_{1}}}^{2,1}L_{t}^{2}\,\text{such that}\,\,\left\Vert \left\langle x\right\rangle ^{\frac{1}{2}+\epsilon}G\right\Vert _{L_{t,x}^{2}}<\infty\right\} \label{eq:Ispace}
\end{equation}
for the strong interactions terms where $\widehat{x_{1}}$ is the
subspace orthogonal to the $x_{1}$ direction. Define 
\begin{equation}
\left\Vert G\right\Vert _{I}=\max\left\{ \,\left\Vert \left\langle x\right\rangle ^{\frac{1}{2}+\epsilon}G\right\Vert _{L_{t,x}^{2}},\,\left\Vert G\right\Vert _{L_{x_{1}}^{1}L_{\widehat{x_{1}}}^{2,1}L_{t}^{2}},\,\left\Vert G\right\Vert _{L_{x}^{\frac{3}{2},1}L_{t}^{2}}\right\} .\label{eq:Inorm}
\end{equation}
Also recall that for a function $G(x,t)$, we use the notation: 
\begin{equation}
G^{S}(x,t):=G(x+\vec{v}t,t).
\end{equation}
First of all, we have Strichartz estimates:
\begin{thm}
	\label{thm:StriCharB}Suppose $u$ is a scattering
	state in the sense of Definition \ref{AO}. Then for $p>2$ and $(p,q)$
	satisfying 
	\begin{equation}
	\frac{1}{2}=\frac{1}{p}+\frac{3}{q},
	\end{equation}
	we have
	\begin{align}
		\|u\|_{L_{t}^{p}\left([0,\infty),\,L_{x}^{q}\right)} & \lesssim\|g\|_{L^{2}}+\|f\|_{\dot{H}^{1}}+\left\Vert F\right\Vert _{L_{t}^{1}L_{x}^{2}}\nonumber \\
		& +\left\Vert F_{1}\right\Vert _{I}+\left\Vert F_{2}^{S}\right\Vert _{I}.\label{eq:Strichartz}
	\end{align}
	and 
	\begin{align}
		\|u\|_{L_{t}^{2}\left([0,\infty),\,L_{r}^{\infty}L_{\omega}^{2}\right)} & \lesssim\|g\|_{L^{2}}+\|f\|_{\dot{H}^{1}}+\left\Vert F\right\Vert _{L_{t}^{1}L_{x}^{2}}\nonumber \\
		& +\left\Vert F_{1}\right\Vert _{I}+\left\Vert F_{2}^{S}\right\Vert _{I}.\label{eq:EndStrichartz}
	\end{align}
\end{thm}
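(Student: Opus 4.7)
My approach is to split the solution via Duhamel's formula into a free-evolution part plus forcing terms, and estimate each piece separately. Writing
\begin{equation*}
u(t) = \cos(t\sqrt{-\Delta}) f + \tfrac{\sin(t\sqrt{-\Delta})}{\sqrt{-\Delta}} g + \int_0^t \tfrac{\sin((t-s)\sqrt{-\Delta})}{\sqrt{-\Delta}} \bigl[F + F_1 + F_2 - \mathrm{V}_1 u - \mathrm{V}_2(\cdot - \vec{v}s) u\bigr](s)\,ds,
\end{equation*}
I would handle the free-evolution contribution by the classical Strichartz estimates (non-endpoint) and, for the second inequality \eqref{eq:EndStrichartz}, the Klainerman--Machedon spherically-averaged endpoint $L^2_t L^\infty_r L^2_\omega$ in $\mathbb{R}^3$. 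The term $F \in L^1_t L^2_x$ is then absorbed by the standard dual inhomogeneous Strichartz bound $L^1_t L^2_x \to L^p_t L^q_x$.

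The terms $F_1,\,F_2$ do not lie in $L^1_t L^2_x$ in the intended applications, so I would replace the dual Strichartz estimate by the inhomogeneous reversed Strichartz machinery from Chen \cite{GC3}. Concretely, the $I$-norm is built precisely so that the Duhamel integral of a source in $L^{3/2,1}_x L^2_t \cap L^1_{x_1} L^{2,1}_{\widehat{x_1}} L^2_t$ (with the additional weighted $L^2_{t,x}$ control by $\langle x\rangle^{1/2+\epsilon_0}$) satisfies the Strichartz pair bound by combining time-then-space integration with the free-wave local energy decay. For $F_2$ one first conjugates by the Lorentz boost \eqref{eq:LorentzT} to pass to the frame in which $V_2$ is stationary, applies the same estimate to $F_2^S$, and transforms back; this produces precisely the norm $\|F_2^S\|_I$ appearing on the right-hand side.

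The potential terms $\mathrm{V}_1 u$ and $\mathrm{V}_2(\cdot-\vec{v}t)\,u$ are absorbed into the left-hand side via a bootstrap. Using the scattering-state hypothesis of Definition \ref{AO} to kill the bound-state projections for both $H_1$ and $H_2$ (the latter in the Lorentz-boosted frame), and using the local energy decay for the charge transfer wave equation established in Chen \cite{GC3,GC2}, I would first establish weighted bounds
\begin{equation*}
\bigl\|\langle x\rangle^{-\frac{1}{2}-\epsilon} u\bigr\|_{L^2_t L^2_x} + \bigl\|\langle x - \vec{v}t\rangle^{-\frac{1}{2}-\epsilon} u\bigr\|_{L^2_t L^2_x} \lesssim \mathrm{RHS},
\end{equation*}
then pair them against the rapid decay $\langle x\rangle^{-\alpha}$, $\alpha>3$, of the $\mathrm{V}_j$ to place each potential term into a norm the Duhamel integral can handle. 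Iterating this bootstrap closes the Strichartz bound.

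The main obstacle is that the two projections $P_c(H_1)$ and $P_c(H_2)$ sit in two distinct Lorentz frames which do not commute, so the bootstrap must alternate between the stationary frame (where $\mathrm{V}_1$ is frozen) and the boosted frame (where $\mathrm{V}_2$ is frozen), and requires simultaneous local energy decay in both frames — exactly the content that Chen \cite{GC3,GC2} develops for the charge transfer model. Without the scattering-state hypothesis any bound-state contribution would be preserved by the flow and obstruct every dispersive bound, so one must check carefully that the decomposition of $u$ into scattering and bound components is respected at each iterate; this is where I expect the technical weight of the argument to lie.
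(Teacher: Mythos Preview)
The paper does not contain a proof of Theorem~\ref{thm:StriCharB}: it appears in the Appendix under the heading ``In this Appendix, we recall the results from Chen \cite{GC3},'' and is stated without argument. So there is no in-paper proof to compare your proposal against.

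As a sketch of the strategy behind \cite{GC3}, your outline is broadly correct: Duhamel with respect to the free flow, classical Strichartz for the homogeneous piece and the $L^1_tL^2_x$ forcing $F$, the reversed/weighted machinery for $F_1$ and $F_2^S$, and a bootstrap on the potential terms that exploits the scattering-state hypothesis to kill bound-state growth in each of the two frames. You also correctly flag the real difficulty, namely that $P_c(H_1)$ and $P_c(H_2)$ live in incompatible Lorentz frames.

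What your sketch underplays is that in \cite{GC3} this last point is not handled by a simple alternating bootstrap on weighted $L^2_{t,x}$ norms. The argument there proceeds through a channel decomposition together with the endpoint reversed Strichartz estimate $L^\infty_x L^2_t$ (Theorem~\ref{thm:EndRStriCB} here), and the regular Strichartz bound is then \emph{derived from} the reversed one (cf.\ the remark in the proof of Theorem~\ref{thm:Stability}: ``using the reversed Strichartz estimates to derive regular Strichartz estimates''). Your proposed route of first proving a local-smoothing bound $\|\langle x\rangle^{-1/2-\epsilon}u\|_{L^2_{t,x}}\lesssim \mathrm{RHS}$ and then closing directly on $L^p_tL^q_x$ would require an inhomogeneous Strichartz estimate with source in weighted $L^2_{t,x}$, which is not one of the standard dual pairs and is exactly why the reversed-norm framework is introduced. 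So the outline is right in spirit, but the mechanism that actually closes the loop is the $L^\infty_xL^2_t$ estimate rather than a bare weighted-$L^2$ bootstrap.
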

Secondly, one has the energy estimate:
\begin{thm}
	\label{thm:EnergyCharge}Suppose $u$ is a scattering
	state in the sense of Definition \ref{AO}, then we have
	\begin{align}
		\sup_{t\text{\ensuremath{\in\mathbb{R}}}}\left(\|\nabla u(t)\|_{L^{2}}+\|u_{t}(t)\|_{L^{2}}\right) & \lesssim\|g\|_{L^{2}}+\|f\|_{\dot{H}^{1}}+\left\Vert F\right\Vert _{L_{t}^{1}L_{x}^{2}}\nonumber \\
		& +\left\Vert F_{1}\right\Vert _{I}+\left\Vert F_{2}^{S}\right\Vert _{I}.\label{eq:EnergyI}
	\end{align}
\end{thm}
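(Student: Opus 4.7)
The strategy is to view \eqref{eq:chargeeq} as an inhomogeneous free wave equation by moving the potential terms to the right hand side, and then to feed each piece into the appropriate energy estimate. Write
\begin{equation}
\partial_{tt}u-\Delta u = H, \qquad H := F + F_{1} + F_{2} - \mathbf{\mathrm{V}}_{1}(x)u - \mathbf{\mathrm{V}}_{2}(x-\vec{v}t)u,
\end{equation}
and decompose $u = u_{\mathrm{data}} + u_{F} + u_{1} + u_{2}$, where $u_{\mathrm{data}}$ carries the initial data and solves the homogeneous equation, $u_{F}$ solves $\partial_{tt}u_{F}-\Delta u_{F}=F$ with zero data, $u_{1}$ solves with forcing $F_{1}-\mathbf{\mathrm{V}}_{1}u$ and zero data, and $u_{2}$ with forcing $F_{2}-\mathbf{\mathrm{V}}_{2}(x-\vec{v}t)u$ and zero data. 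For $u_{\mathrm{data}}$ and $u_{F}$, the standard free-wave energy identity gives control by $\|f\|_{\dot H^{1}}+\|g\|_{L^{2}}+\|F\|_{L^{1}_{t}L^{2}_{x}}$.

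For $u_{1}$, I would invoke estimate \eqref{eq:decay1} of Lemma \ref{lem:energyL} with the weight $\langle x\rangle$:
\begin{equation}
\sup_{t}\bigl(\|\nabla u_{1}\|_{L^{2}}+\|\partial_{t}u_{1}\|_{L^{2}}\bigr) \lesssim \bigl\|\langle x\rangle^{\frac12+\epsilon}F_{1}\bigr\|_{L^{2}_{t,x}} + \bigl\|\langle x\rangle^{\frac12+\epsilon}\mathbf{\mathrm{V}}_{1}u\bigr\|_{L^{2}_{t,x}}.
\end{equation}
The first term sits inside $\|F_{1}\|_{I}$ by definition \eqref{eq:Inorm}. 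For the second, since $|\mathbf{\mathrm{V}}_{1}(x)|\lesssim\langle x\rangle^{-\alpha}$ with $\alpha>3$, I pull $u$ out in $L^{\infty}_{x}L^{2}_{t}$ and estimate
\begin{equation}
\bigl\|\langle x\rangle^{\frac12+\epsilon}\mathbf{\mathrm{V}}_{1}u\bigr\|_{L^{2}_{t,x}} \lesssim \bigl\|\langle x\rangle^{\frac12+\epsilon-\alpha}\bigr\|_{L^{2}_{x}}\,\|u\|_{L^{\infty}_{x}L^{2}_{t}},
\end{equation}
which is finite precisely because $\alpha>3$ forces the exponent below $-3/2$. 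For $u_{2}$ the argument is parallel, using \eqref{eq:decay2} with the moving weight $\langle x-\vec{v}t\rangle$; changing variables $y=x-\vec{v}t$ turns the $F_{2}$-term into $\|\langle y\rangle^{\frac12+\epsilon}F_{2}^{S}\|_{L^{2}_{t,y}}\leq\|F_{2}^{S}\|_{I}$, and turns the potential term into $\|\langle y\rangle^{\frac12+\epsilon-\alpha}\|_{L^{2}_{y}}\,\|u^{S}\|_{L^{\infty}_{x}L^{2}_{t}}$.

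To close the loop, I would now invoke the reversed Strichartz / local decay bounds \eqref{eq:Localinfty}--\eqref{eq:localinftyS} from Theorem \ref{thm:EndRStriCB}, which for a scattering state give
\begin{equation}
\|u\|_{L^{\infty}_{x}L^{2}_{t}} + \|u^{S}\|_{L^{\infty}_{x}L^{2}_{t}} \lesssim \|f\|_{\dot H^{1}}+\|g\|_{L^{2}}+\|F\|_{L^{1}_{t}L^{2}_{x}}+\|F_{1}\|_{I}+\|F_{2}^{S}\|_{I}.
\end{equation}
Summing the estimates for $u_{\mathrm{data}},u_{F},u_{1},u_{2}$ then yields \eqref{eq:EnergyI}.

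The main obstacle, as I see it, is only bookkeeping: keeping track of which weight to apply to which term, and verifying that the fast potential decay $\alpha>3$ is exactly what is needed to absorb the loss $\langle x\rangle^{\frac12+\epsilon}$ coming from Lemma \ref{lem:energyL} into an $L^{2}_{x}$ function, so that the local decay norm of $u$ controls the potential interaction. Once one uses reversed Strichartz estimates to bound $\|u\|_{L^{\infty}_{x}L^{2}_{t}}$ and its Lorentz-shifted counterpart, the potential contributions are harmless and the energy estimate follows without any circularity, because the reversed-norm bounds have been proved independently in Theorem \ref{thm:EndRStriCB}.
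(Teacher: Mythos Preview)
The paper does not supply its own proof of Theorem~\ref{thm:EnergyCharge}: the Appendix explicitly \emph{recalls} this estimate from \cite{GC3} as one of several linear inputs (alongside Theorems~\ref{thm:StriCharB} and~\ref{thm:EndRStriCB}), so there is no in-paper argument to compare against.

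Your derivation is nonetheless a reasonable and essentially correct route, consistent with how the paper itself uses the linear machinery. You move the potentials to the right, invoke Lemma~\ref{lem:energyL} with the stationary weight for $F_{1}-\mathrm{V}_{1}u$ and the Galilei-shifted weight for $F_{2}-\mathrm{V}_{2}(\cdot-\vec{v}t)u$, and then close by controlling $\|u\|_{L^{\infty}_{x}L^{2}_{t}}$ and $\|u^{S}\|_{L^{\infty}_{x}L^{2}_{t}}$ via the reversed Strichartz estimates for scattering states. The decay check $\langle x\rangle^{\frac12+\epsilon-\alpha}\in L^{2}_{x}$ for $\alpha>3$ is exactly right, and the change of variables handling of $F_{2}^{S}$ matches the definition of the $I$-norm.

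One concrete correction: the estimates you need to close the loop are \eqref{eq:RStrichartz} and \eqref{eq:RStrichartzS} (the $L^{\infty}_{x}L^{2}_{t}$ bounds), not \eqref{eq:Localinfty}--\eqref{eq:localinftyS}, which are the weighted $L^{\infty}_{t}$ local decay statements and do not by themselves give $\|u\|_{L^{\infty}_{x}L^{2}_{t}}$. With that label fixed, and granting that Theorem~\ref{thm:EndRStriCB} is established in \cite{GC3} independently of the energy bound (so there is no circularity), your argument goes through.
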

Even more importantly, we obtain the endpoint reversed Strichartz
estimates for $u$.
\begin{thm}
	\label{thm:EndRStriCB}Suppose
	$u$ is a scattering state in the sense of Definition \ref{AO}, then
	\begin{align}
		\sup_{x\in\mathbb{R}^{3}}\left(\int_{0}^{\infty}\left|u(x,t)\right|^{2}dt\right)^{\frac{1}{2}} & \lesssim\|g\|_{L^{2}}+\|f\|_{\dot{H}^{1}}+\left\Vert F\right\Vert _{L_{t}^{1}L_{x}^{2}}\nonumber \\
		& +\left\Vert F_{1}\right\Vert _{I}+\left\Vert F_{2}^{S}\right\Vert _{I},\label{eq:RStrichartz}
	\end{align}
	and 
	\begin{align}
		\sup_{x\in\mathbb{R}^{3}}\left(\int_{0}^{\infty}\left|u(x+\vec{v}t,t)\right|^{2}dt\right)^{\frac{1}{2}} & \lesssim\|g\|_{L^{2}}+\|f\|_{\dot{H}^{1}}+\left\Vert F\right\Vert _{L_{t}^{1}L_{x}^{2}}\nonumber \\
		& +\left\Vert F_{1}\right\Vert _{I}+\left\Vert F_{2}^{S}\right\Vert _{I}.\label{eq:RStrichartzS}
	\end{align}
	Moreover, one has
	
	\begin{align}
		\left\Vert \left\langle x\right\rangle ^{-3}u(x,t)\right\Vert _{L_{x}^{\frac{3}{2},1}L_{t}^{\infty}\bigcap L_{x_{1}}^{1}L_{\widehat{x_{1}}}^{2,1}L_{t}^{\infty}} & \lesssim\|g\|_{L^{2}}+\|f\|_{\dot{H}^{1}}+\left\Vert F\right\Vert _{L_{t}^{1}L_{x}^{2}}\nonumber \\
		& +\left\Vert F_{1}\right\Vert _{D_{1}}+\left\Vert F_{2}^{S}\right\Vert _{D_{1}}\label{eq:Localinfty}
	\end{align}
	\begin{align}
		\left\Vert \left\langle x\right\rangle ^{-3}u(x+\vec{v}t,t)\right\Vert _{L_{x}^{\frac{3}{2},1}L_{t}^{\infty}\bigcap L_{x_{1}}^{1}L_{\widehat{x_{1}}}^{2,1}L_{t}^{\infty}} & \lesssim\|g\|_{L^{2}}+\|f\|_{\dot{H}^{1}}+\left\Vert F\right\Vert _{L_{t}^{1}L_{x}^{2}}\nonumber \\
		& +\left\Vert F_{1}\right\Vert _{D_{1}}+\left\Vert F_{2}^{S}\right\Vert _{D_{1},}\label{eq:localinftyS}
	\end{align}
	where 
	\begin{equation}
	D_{1}:=\left\{ G(x,t)\in L_{x}^{\frac{3}{2},1}L_{t}^{1}\bigcap L_{x_{1}}^{1}L_{\widehat{x_{1}}}^{2,1}L_{t}^{1}\bigcap L_{t}^{2}L_{x}^{2}\right\} 
	\end{equation}
	and 
	\begin{equation}
	\left\Vert G\right\Vert _{D_{1}}:=\max\left\{ \left\Vert G\right\Vert _{L_{x}^{\frac{3}{2},1}L_{t}^{1}},\,\left\Vert G\right\Vert _{L_{x_{1}}^{1}L_{\widehat{x_{1}}}^{2,1}L_{t}^{1}},\,\left\Vert G\right\Vert _{L_{t}^{2}L_{x}^{2}}\right\} .
	\end{equation}
	One can replace $D_{1}$ by 
	\begin{equation}
	D_{2}:=\left\{ G(x,t)\in L_{x}^{\frac{3}{2},1}L_{t}^{\infty}\bigcap L_{x_{1}}^{1}L_{\widehat{x_{1}}}^{2,1}L_{t}^{\infty}\bigcap L_{t}^{2}L_{x}^{2}\right\} 
	\end{equation}
	and 
	\begin{equation}
	\left\Vert G\right\Vert _{D_{2}}:=\max\left\{ \left\Vert G\right\Vert _{L_{x}^{\frac{3}{2},1}L_{t}^{\infty}},\,\left\Vert G\right\Vert _{L_{x_{1}}^{1}L_{\widehat{x_{1}}}^{2,1}L_{t}^{\infty}},\,\left\Vert G\right\Vert _{L_{t}^{2}L_{x}^{2}}\right\} .
	\end{equation}
\end{thm}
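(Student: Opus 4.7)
The plan is to reduce to the free wave equation plus a sequence of corrections, with the potentials $V_1, V_2$ absorbed perturbatively using the scattering condition to kill the non-dispersive bound-state contributions. First I would write the equation as
\[
\partial_{tt} u - \Delta u = -V_1(x) u - V_2(x - \vec{v}t) u + F + F_1 + F_2,
\]
apply Duhamel with the free propagator, and verify the endpoint reversed Strichartz estimate $\sup_x\|u_0(x,\cdot)\|_{L_t^2}\lesssim\|(f,g)\|_{\dot H^1\times L^2}+\|G\|_{\ast}$ for the free wave equation, where $\|\cdot\|_{\ast}$ is the mixed norm $L_t^1 L_x^2 + L_x^{3/2,1} L_t^2 + L_{x_1}^1 L_{\widehat{x_1}}^{2,1} L_t^2 + \langle x\rangle^{1/2+\epsilon}L_{t,x}^2$ appearing on the right of \eqref{eq:RStrichartz}. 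This free-wave estimate comes from the Kirchhoff representation and a Kato-smoothing/local-energy-decay computation, dualizing against the Lorentz predual $L_x^{3,\infty}$. The shifted version \eqref{eq:RStrichartzS} is then obtained by applying the Lorentz boost $L$ of \eqref{eq:LorentzT}, which is bi-Lipschitz on space-time because $|\vec v|<1$, and which converts $\sup_x\int|u(x+\vec vt,t)|^2\,dt$ into a standard reversed Strichartz norm in the boosted frame.

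Next I would close the perturbative argument for the full potential. Using $|V_i(x)|\lesssim\langle x\rangle^{-\alpha}$ with $\alpha>3$, we have $V_i\in L^{3/2,1}_x$, and H\"older in Lorentz spaces yields
\[
\|V_1 u\|_{L_x^{3/2,1} L_t^2}\lesssim \|V_1\|_{L^{3/2,1}_x}\;\sup_x\|u(x,\cdot)\|_{L_t^2},\qquad
\|V_1 u\|_{L_{x_1}^1 L_{\widehat{x_1}}^{2,1}L_t^2}\lesssim \|V_1\|_{L_{x_1}^1 L_{\widehat{x_1}}^{2,1}}\;\sup_x\|u(x,\cdot)\|_{L_t^2},
\]
and analogously for $V_2(x - \vec vt)u$ after the substitution $x\mapsto x+\vec vt$, which feeds in the shifted norm on the left of \eqref{eq:RStrichartzS}. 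Hence both potential terms are bounded by the LHS of the target inequalities with a small constant after spatially truncating $V_i$ outside a large ball and absorbing the small piece, while the compactly supported piece is handled by a compactness/limiting-absorption argument relative to each $H_i$ individually (this is where the single-potential results from \cite{GC3,GC2} feed in).

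The main obstacle is that this naive perturbative closure fails as soon as either $H_i$ has bound states, since those modes are non-dispersive and genuinely obstruct reversed Strichartz. This is where the scattering condition of Definition \ref{AO} is used: decompose $u=P_c(H_1)u+\sum_i\langle u,w_i\rangle w_i$ and similarly for $H_2$ in its own Lorentz frame. The coefficients $\langle u(t),w_i\rangle$ and $\langle u_L(t'),m_j\rangle$ satisfy ODEs with exponentially growing and decaying modes; the scattering assumption \eqref{eq:ao2-1} forces the growing modes to vanish, which (via the exponential decay of $w_i, m_j$ from Agmon estimates) makes the bound-state contributions exponentially controlled in $t$, and hence integrable in all the norms appearing on the left. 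The cross-projection $\langle w_i,m_j(\cdot-\vec vt)\rangle$ is $O(e^{-c\langle t\rangle})$ because $w_i, m_j$ live in incompatible frames whose supports separate at speed $|\vec v|$; this is the key smallness which makes the two-potential system decouple asymptotically.

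Finally, the local-decay upgrades \eqref{eq:Localinfty}–\eqref{eq:localinftyS} follow from the reversed Strichartz estimates just established by an application of the Christ–Kiselev lemma: since the kernel $K(x,t;y,s)$ of the Duhamel operator from $D_1$-type sources to $\langle x\rangle^{-3}u$ is, after the weighting $\langle x\rangle^{-3}$, uniformly in $L_t^\infty$ when the source lies in $L_t^1$ (and similarly for $D_2$ via Kato smoothing), the mapping $L^1_t\to L^\infty_t$ at the endpoint is recovered because the output exponent $p=\infty$ is strictly larger than the input $p=1$. The strong-interaction sources $F_1, F_2^S$ are absorbed through the weighted $\|\langle x\rangle^{1/2+\epsilon}(\cdot)\|_{L_{t,x}^2}$ component of the $I$-norm, which is precisely the Kato-smoothing dual for the free wave equation, as in Lemma \ref{lem:energyL}. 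Putting these pieces together and iterating gives all four estimates of Theorem \ref{thm:EndRStriCB} for every scattering state in the sense of Definition \ref{AO}.
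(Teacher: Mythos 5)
The paper does not actually prove Theorem \ref{thm:EndRStriCB}: it is imported wholesale from the author's prior work \cite{GC3} (with the refined source norms $I$, $D_1$, $D_2$), so there is no in-paper argument to compare yours against line by line. Judged on its own merits, your sketch has the right ingredients in outline (free endpoint reversed Strichartz via the Kirchhoff kernel and Lorentz duality, Lorentz boost for the shifted norm, elimination of the growing bound-state modes via the scattering condition, Christ--Kiselev for the $L_t^1\to L_t^\infty$ local-decay upgrade), but it does not close.

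The genuine gap is the step where you absorb the potential terms. You correctly reduce $\|V_1u\|_{L_x^{3/2,1}L_t^2}$ and $\|V_2(\cdot-\vec vt)u\|$ to $\|V_i\|\cdot\sup_x\|u(x,\cdot)\|_{L_t^2}$, but to feed this back into the left-hand side you need a small constant, and the potentials are not small: truncating $V_i$ outside a large ball leaves a compactly supported piece of size $O(1)$ in every relevant Lorentz norm, and ``a compactness/limiting-absorption argument relative to each $H_i$ individually'' is not an argument that closes a two-body, time-dependent estimate --- limiting absorption for a single static $H_i$ says nothing about the coupled system with one potential moving relative to the other. The actual proof in \cite{GC3} does not perturb off the free flow at all; it works with the perturbed single-channel evolutions $e^{it\sqrt{H_i}}P_c(H_i)$ (whose reversed Strichartz and local decay estimates require the stated spectral hypotheses that $0$ is neither an eigenvalue nor a resonance, hypotheses your free-flow scheme never invokes), treats each potential as a source for the other channel, and closes by a bootstrap exploiting the asymptotic separation of the two frames. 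Relatedly, your treatment of the bound states is only half the story: killing the exponentially growing modes via \eqref{eq:ao2-1} handles $P_b(H_1)u$ and $P_b(H_2)u_L$ separately, but you still must show that the projections $P_c(H_1)$, $P_c(H_2)$ are asymptotically compatible (the cross terms $\langle w_i,m_j(\cdot-\vec vt)\rangle=O(e^{-ct})$ you mention are the beginning of this, not the end), and this decoupling is again part of the channel decomposition you have replaced with a placeholder.
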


\subsection{Energy comparison}

Next, we recall the energy comparison for wave equations with respect
to different Lorentz frames. 

Following Chen \cite{GC2,GC3}, we consider wave equations with time-dependent
potentials
\begin{equation}
\partial_{tt}u-\Delta u+V(x,t)u=F
\end{equation}
with
\begin{equation}
\left|V(x,\mu x_{1})\right|\lesssim\frac{1}{\left\langle x\right\rangle ^{2}}
\end{equation}
uniformly for $0\leq\left|\mu\right|\leq1$. These in particular
apply to wave equations with moving potentials with speed strictly
less than $1$. For example, if the potential is of the form 
\begin{equation}
V(x,t)=V\left(x-\vec{v}t\right)
\end{equation}
with 
\begin{equation}
\left|V(x)\right|\lesssim\frac{1}{\left\langle x\right\rangle ^{2}}
\end{equation}
then it is transparent that 
\begin{equation}
\left|V(x,\mu x_{1})\right|=\left|V(x-\vec{v}\mu x_{1})\right|\lesssim\frac{1}{\left\langle x\right\rangle ^{2}}.
\end{equation}
We sketch the argument in \cite{GC2}, suppose
\begin{equation}
\partial_{tt}u-\Delta u+V(x,t)u=F,
\end{equation}
then it is clear that
\begin{eqnarray}
Fu_{t} & = & u_{t}\left(\square u-V(t)u\right)\nonumber \\
& = & -\partial_{t}\left(\frac{\left|u_{t}\right|^{2}}{2}+\frac{\left|u_{x}\right|^{2}}{2}\right)+\mathrm{div}\left(\nabla uu_{t}\right)-V(x,t)uu_{t}.\label{eq:vector}
\end{eqnarray}
We apply the space-time divergence theorem to 
\begin{equation}
\left(\nabla uu_{t},-\left(\frac{\left|u_{t}\right|^{2}}{2}+\frac{\left|u_{x}\right|^{2}}{2}\right)\right)
\end{equation}
then one has the following comparison with the inhomogeneous term,
see Chen \cite{GC2}.
\begin{thm}
	\label{thm:generalC}Let $\left|v\right|<1$. Suppose 
	\begin{equation}
	\partial_{tt}u-\Delta u+V(x,t)u=F(x,t)
	\end{equation}
	and 
	\begin{equation}
	\left|V(x,\mu x_{1})\right|\lesssim\frac{1}{\left\langle x\right\rangle ^{2}}
	\end{equation}
	for $0\leq\left|\mu\right|<1$. Then 
	
	\begin{eqnarray}
	\int\left|\nabla_{x}u\left(x_{1},x_{2},x_{3},vx_{1}\right)\right|^{2}+\left|\partial_{t}u\left(x_{1},x_{2},x_{3},vx_{1}\right)\right|^{2}dx\nonumber \\
	\text{\ensuremath{\lesssim}}\int\left|\nabla_{x}u\left(x_{1},x_{2},x_{3},0\right)\right|^{2}+\left|\partial_{t}u\left(x_{1},x_{2},x_{3},0\right)\right|^{2}dx\label{eq:generalC}\\
	+\int_{\mathbb{R}}\int_{\mathbb{R}^{3}}\left|F(x,t)\right|^{2}dxdt\nonumber 
	\end{eqnarray}
	and 
	\begin{eqnarray}
	\int\left|\nabla_{x}u\left(x_{1},x_{2},x_{3},0\right)\right|^{2}+\left|\partial_{t}u\left(x_{1},x_{2},x_{3},0\right)\right|^{2}dx\nonumber \\
	\text{\ensuremath{\lesssim}}\int\left|\nabla_{x}u\left(x_{1},x_{2},x_{3},vx_{1}\right)\right|^{2}+\left|\partial_{t}u\left(x_{1},x_{2},x_{3},vx_{1}\right)\right|^{2}dx\label{eq:generalC-1}\\
	+\int_{\mathbb{R}}\int_{\mathbb{R}^{3}}\left|F(x,t)\right|^{2}dxdt\nonumber 
	\end{eqnarray}
	where the implicit constant depends on $v$ and $V$.
\end{thm}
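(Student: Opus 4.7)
The plan is to apply the space-time divergence theorem to the momentum--energy vector
$$P[u] := \bigl(\nabla u\cdot u_t,\; -\tfrac{1}{2}(|u_t|^2+|\nabla u|^2)\bigr) \in \mathbb{R}^{3+1},$$
whose divergence, after rearranging the pointwise identity \eqref{eq:vector}, equals $Fu_t + V(x,t)uu_t$. I would integrate over the wedge $\Omega_v\subset \mathbb{R}^{3+1}$ bounded by the flat hyperplane $\{t=0\}$ and the tilted hyperplane $\{t=vx_1\}$; the divergence theorem then produces an identity between the two boundary fluxes and a bulk error term.

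The geometric heart of the argument is that $|v|<1$ makes $\{t=vx_1\}$ spacelike. Parameterizing this slice by $(x_1,x_2,x_3)\mapsto(x_1,x_2,x_3,vx_1)$, the unit conormal is $(-v,0,0,1)/\sqrt{1+v^2}$ and $dS=\sqrt{1+v^2}\,dx$, so the flux density simplifies to
$$\tfrac{1}{2}\bigl(|u_t|^2+|\nabla u|^2\bigr)\big|_{t=vx_1} - v\,\partial_{x_1}u\cdot u_t\big|_{t=vx_1}.$$
Cauchy--Schwarz bounds $|v\,\partial_{x_1}u\cdot u_t|$ by $\tfrac{|v|}{2}(|\partial_{x_1}u|^2+|u_t|^2)$, so for $|v|<1$ this density is positive and pointwise comparable, with constants depending only on $v$, to the tilted-slice integrand appearing on the left of \eqref{eq:generalC}. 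On $\{t=0\}$ the flux is exactly the standard flat-time energy, so once the bulk term is absorbed both \eqref{eq:generalC} and its reverse \eqref{eq:generalC-1} drop out of this symmetric two-sided equivalence.

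To absorb $\int_{\Omega_v}(Fu_t+Vuu_t)\,dx\,dt$, I would foliate $\Omega_v$ by the one-parameter family of spacelike slices $\{t=\mu x_1\}_{\mu\in[0,v]}$ and set $E(\mu):=\int_{\mathbb{R}^3}(|\nabla u|^2+|u_t|^2)(x,\mu x_1)\,dx$. Differentiating the divergence identity in $\mu$ yields a first-order differential inequality for $E(\mu)$ whose right-hand side splits into two pieces. The potential piece is controlled by the uniform decay $|V(x,\mu x_1)|\lesssim \langle x\rangle^{-2}$ hypothesized on $[0,v]$ together with Hardy's inequality $\int \langle x\rangle^{-2}|u|^2\,dx\lesssim \int|\nabla u|^2\,dx$, yielding a bound by a constant multiple of $E(\mu)$; the inhomogeneous piece is handled by a weighted Cauchy--Schwarz, which produces $\|F\|_{L^2_{t,x}}^2$ plus a small multiple of $E(\mu)$. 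Grönwall's inequality on the bounded interval $[0,v]$ then closes $E(v)\lesssim_{v,V} E(0)+\|F\|_{L^2_{t,x}}^2$, which is \eqref{eq:generalC}; the reverse inequality \eqref{eq:generalC-1} follows by interchanging the roles of the flat and tilted slices and running the same argument in reverse. The main technical obstacle is the unbounded $x_1$-direction of $\Omega_v$, which rules out a naive global Hardy estimate on the wedge; the slice-by-slice Grönwall scheme is engineered precisely to localize the Hardy step to a single hypersurface, on which the uniform $\langle x\rangle^{-2}$ decay of $V$ applies with constant independent of $\mu$, leaving only a bounded $d\mu$-integration to perform.
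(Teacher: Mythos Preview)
Your approach coincides with the paper's: the same energy--momentum vector $(\nabla u\,u_t,\,-\tfrac12(|u_t|^2+|\nabla u|^2))$, the same divergence identity \eqref{eq:vector}, and the same plan to apply the space-time divergence theorem between the two slices. The paper gives no further detail and defers to \cite{GC2}, so your flux computation and the Gr\"onwall/Hardy treatment of the potential term are already more explicit than what is written here.

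The one soft spot is the forcing term. Foliating the wedge by $\{t=\mu x_1\}_{\mu\in[0,v]}$ introduces a Jacobian $|x_1|$, so the bulk contribution of $F$ reads $\int_0^v\!\int_{\mathbb{R}^3} x_1\,F\,u_t\,(x,\mu x_1)\,dx\,d\mu$. For the potential piece this extra $|x_1|$ is harmlessly absorbed by the hypothesis $|V(x,\mu x_1)|\lesssim\langle x\rangle^{-2}$ together with Hardy, exactly as you say. But for $F$ no such decay is available, and the natural Cauchy--Schwarz splits either leave a weight $|x_1|$ on $|F|^2$ (which, after undoing the change of variables, gives $\int_{\Omega_v}|x_1|\,|F|^2$ rather than $\|F\|_{L^2_{t,x}}^2$) or else leave $\|u_t\|_{L^2(\Omega_v)}^2$, which $\sup_\mu E(\mu)$ does not control because of the unbounded $|x_1|$-weight. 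A scaling check confirms the obstruction: with $V=0$, replacing $u(x,t)$ by $u(x/\lambda,t/\lambda)$ multiplies the energies on both slices by $\lambda$ while leaving $\|F\|_{L^2_{t,x}}$ invariant, so the inequality with a bare $\|F\|_{L^2_{t,x}}^2$ on the right cannot hold with a constant independent of $\lambda$. The full argument in \cite{GC2} presumably either carries a different norm on $F$ or exploits additional structure present in the applications; your ``weighted Cauchy--Schwarz'' step does not close as written.
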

From the theorem above, we know that the initial energy with respect to different
frames stays comparable up to $\left\Vert F\right\Vert _{L_{t,x}^{2}}$.

\subsection{Scattering}

In this subsection, we discuss the scattering behavior of the solution
to the nonlinear equation for $h$: 
\begin{align}
	\partial_{tt}h-\Delta h+h^{5}+\left(V_{1}(x)+5W_{1}^{4}(x)\right)h+\left(V_{2}\left(x-\vec{v}t\right)+5W_{2}^{4}\left(x-\vec{v}t\right)\right)h\nonumber \\
	=F_{1}(x,t)+F_{2}(x,t)+F(x,t)+N(h,x,t)+a(x,t)h & .\label{eq:equationh-1}
\end{align}
We show that if $h$ is bounded in the $S$ norm where

\begin{equation}
S=\left\{ \left\Vert u\right\Vert _{Stri},\,\left\Vert u\right\Vert _{\dot{H}^{1}\times L^{2}},\,\left\Vert u\right\Vert _{L_{x}^{\infty}L_{t}^{2}},\,\left\Vert u^{S}\right\Vert _{L_{x}^{\infty}L_{t}^{2}},\,\left\Vert u\right\Vert _{D},\,\left\Vert u^{S}\right\Vert _{D}<\infty\right\} \label{eq:Sspace-1}
\end{equation}
and
\begin{equation}
D:=\left\{ \left\langle x\right\rangle ^{-3}G(x,t)\in L_{x}^{\frac{3}{2},1}L_{t}^{\infty}\bigcap L_{x_{1}}^{1}L_{\widehat{x_{1}}}^{2,1}L_{t}^{\infty}\right\} ,\label{eq:Dspace-1}
\end{equation}
then $h$ scatters to a free wave. 

We will use the notations from the introduction.
\begin{thm}
	\label{thm:scattering}Suppose that $h$ is a solution to \eqref{eq:equationh-1}
	such that 
	\begin{equation}
	\left\Vert h\right\Vert _{S}<\infty,\,\left\Vert \left\langle x\right\rangle ^{\frac{1}{2}+\epsilon}F_{1}\right\Vert _{L_{t,x}^{2}}<\infty,\left\Vert \left\langle x\right\rangle ^{\frac{1}{2}+\epsilon}F_{2}^{S}\right\Vert _{L_{t,x}^{2}}<\infty\,\text{and }\,\left\Vert F\right\Vert _{L_{t}^{1}L_{x}^{2}}<\infty.
	\end{equation}
	Write 
	\begin{equation}
	H[t]=\left(h,h_{t}\right)^{t}\in C^{0}\left([0,\infty);\,\dot{H}^{1}\right)\times C^{0}\left([0,\infty);\,L^{2}\right),
	\end{equation}
	with initial data \textup{$H[0]=\left(f,g\right)^{t}\in\dot{H}^{1}\times L^{2}$.
		Then there exists free data 
		\[
		H_{0}[0]=\left(f_{0},g_{0}\right)^{t}\in\dot{H}^{1}\times L^{2}
		\]
		such that 
		\begin{equation}
		\left\Vert H[t]-e^{tJH_{F}}H_{0}[0]\right\Vert _{\dot{H}^{1}\times L^{2}}\rightarrow0
		\end{equation}
		as $t\rightarrow\infty$.}
\end{thm}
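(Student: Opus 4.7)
The plan is to rewrite the equation for $h$ as an inhomogeneous free wave equation
\[
(\partial_{tt}-\Delta) h = G,
\]
with
\[
G := -h^{5} - (V_{1}+5W_{1}^{4})h - \bigl(V_{2}(\cdot-\vec v t)+5W_{2}^{4}(\cdot-\vec v t)\bigr)h + F_{1}+F_{2}+F+N(h)+ah,
\]
and to construct the scattering data by Duhamel's formula. Working in the Hamiltonian form of the introduction, I would set
\[
H_{0}[0] := H[0] + \int_{0}^{\infty} e^{-sJH_{F}}\bigl(0,G(s)\bigr)^{T}\,ds,
\]
provided the integral converges in $\dot H^{1}\times L^{2}$, which is itself a consequence of the tail estimates below. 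One then has
\[
H[t]-e^{tJH_{F}}H_{0}[0] = -\int_{t}^{\infty} e^{(t-s)JH_{F}}\bigl(0,G(s)\bigr)^{T}\,ds,
\]
so the theorem reduces to showing that the $\dot H^{1}\times L^{2}$ norm of this tail integral vanishes as $t\to\infty$.

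First I would split $G$ into \emph{easy} pieces whose $L^{1}_{t}L^{2}_{x}$ norm is globally finite, namely $F$ (by the explicit computation of Remark \ref{rem:eg}), the cross term $ah$ bounded via H\"older by $\|a\|_{L^{5/4}_{t}L^{5/2}_{x}}\|h\|_{L^{5}_{t}L^{10}_{x}}$, the cubic and quartic pieces $M_{2}(h)$ and $M_{3}(h)$ of $N(h)$, and the quintic $h^{5}$. Each of these nonlinear contributions is controlled in $L^{1}_{t}L^{2}_{x}$ by H\"older together with the decay of $W_{1},W_{2}$ and the Strichartz component of the $S$-norm of $h$, exactly as in the iteration scheme of the proof of Theorem \ref{thm:Stability}. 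For each such piece $H_{\mathrm{easy}}$, the standard energy identity for the free wave equation yields
\[
\Bigl\|\int_{t}^{\infty} e^{(t-s)JH_{F}}\bigl(0,H_{\mathrm{easy}}(s)\bigr)^{T}\,ds\Bigr\|_{\dot H^{1}\times L^{2}} \lesssim \|H_{\mathrm{easy}}\|_{L^{1}_{t}([t,\infty),L^{2}_{x})}\to 0.
\]

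Next, for the slowly-decaying inhomogeneous terms $F_{1},F_{2}$ and the strong-interaction quadratic pieces $M_{1,1}(h)=10W_{1}^{3}h^{2}$ and $M_{1,4}(h)=10W_{2}^{3}(\cdot-\vec v t)h^{2}$, I would invoke Lemma \ref{lem:energyL}. For $F_{1}$ the weighted norm $\|\langle x\rangle^{1/2+\epsilon}F_{1}\|_{L^{2}_{t,x}}$ is finite by hypothesis; for $F_{2}$ I would use the moving-weight estimate \eqref{eq:decay2}, observing that $\|\langle x-\vec v t\rangle^{1/2+\epsilon}F_{2}\|_{L^{2}_{t,x}}=\|\langle x\rangle^{1/2+\epsilon}F_{2}^{S}\|_{L^{2}_{t,x}}$, again finite by hypothesis. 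The quadratic pieces are handled by factoring into a local-decay factor ($W_{1}^{3}h$, controlled via $h\in D$) and an $L^{\infty}_{x}L^{2}_{t}$ factor $h$, with the symmetric treatment for $M_{1,4}$ after passing to the moving frame.

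The subtlest pieces are the linear potential terms $V_{j}h$ and $5W_{j}^{4}h$, which are neither in $L^{1}_{t}L^{2}_{x}$ nor directly in the $I$-norms appearing in the linear theorems. Here I would combine the rapid spatial decay $|V_{1}|,|W_{1}^{4}|\lesssim \langle x\rangle^{-\beta}$ with $\beta>3$ and the $L^{\infty}_{x}L^{2}_{t}$ bound on $h$ from the $S$-norm:
\[
\bigl\|\langle x\rangle^{1/2+\epsilon_{0}}(V_{1}+5W_{1}^{4})h\bigr\|_{L^{2}_{t,x}} \le \bigl\|\langle x\rangle^{1/2+\epsilon_{0}}(V_{1}+5W_{1}^{4})\bigr\|_{L^{2}_{x}}\,\|h\|_{L^{\infty}_{x}L^{2}_{t}} < \infty,
\]
because $\langle x\rangle^{1/2+\epsilon_{0}-\beta}\in L^{2}(\mathbb R^{3})$ once $\beta>2+\epsilon_{0}$ and $\epsilon_{0}$ is small. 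For the moving potential terms $V_{2}(\cdot-\vec v t)h$ and $W_{2}^{4}(\cdot-\vec v t)h$ I would invoke \eqref{eq:decay2} of Lemma \ref{lem:energyL} with the moving weight $\langle x-\vec v t\rangle^{1/2+\epsilon_{0}}$ and bound the resulting $L^{2}_{t,x}$ norm by the analogous spatial $L^{2}_{x}$ norm of the potential times $\|h^{S}\|_{L^{\infty}_{x}L^{2}_{t}}$, finite by $h\in S$. Once every component of $G$ is thus controlled, dominated convergence in time forces each tail to vanish as $t\to\infty$, giving the scattering statement. The main obstacle is precisely this final step: choosing weights compatible simultaneously with the spatial integrability of the decaying potentials and with the reversed-Strichartz norms of $h$, which is exactly where Lemma \ref{lem:energyL} and the local energy decay of Chen \cite{GC3,GC2} are indispensable.
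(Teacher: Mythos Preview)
Your proposal is correct and follows essentially the same route as the paper's own proof: rewrite the equation as a free wave equation with forcing $G$, define the scattering data by Duhamel, and show the tail integral vanishes by splitting $G$ into pieces handled either by the standard $L^{1}_{t}L^{2}_{x}$ energy estimate or by the weighted $L^{2}_{t,x}$ estimate of Lemma~\ref{lem:energyL}. The paper works in the equivalent half-wave form $H=Ah+i\partial_{t}h$ with $A=\sqrt{-\Delta}$ rather than the Hamiltonian pair $(h,h_{t})$, but the term-by-term bookkeeping is identical; note only that you omitted the mixed quadratic pieces $M_{1,2}$ and $M_{1,3}$, which the paper (and your own argument in the proof of Theorem~\ref{thm:Stability}) places among the easy $L^{1}_{t}L^{2}_{x}$ terms.
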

\begin{proof}
	We set $A=\sqrt{-\Delta}$ and notice that 
	\begin{equation}
	\left\Vert Af\right\Vert _{L^{2}}\simeq\left\Vert f\right\Vert _{\dot{H}^{1}},\,\,\forall f\in C^{\infty}\left(\mathbb{R}^{3}\right).
	\end{equation}
	For real-valued $u=\left(u_{1},u_{2}\right)\in\mathcal{H}=\dot{H}^{1}\left(\mathbb{R}^{3}\right)\times L^{2}\left(\mathbb{\mathbb{R}}^{3}\right)$,
	we write 
	\begin{equation}
	U:=Au_{1}+iu_{2}.
	\end{equation}
	We know 
	\begin{equation}
	\left\Vert U\right\Vert _{L^{2}}\simeq\left\Vert \left(u_{1},u_{2}\right)\right\Vert _{\mathcal{H}}.
	\end{equation}
	We also notice that $h$ solves \eqref{eq:equationh-1} if and only
	if 
	\begin{equation}
	H:=Ah+i\partial_{t}h
	\end{equation}
	satisfies 
	\begin{align}
		i\partial_{t}H & =AH-h^{5}-\left(V_{1}(x)+5W_{1}^{4}(x)\right)h-\left(V_{2}\left(x-\vec{v}t\right)+5W_{2}^{4}\left(x-\vec{v}t\right)\right)h\nonumber \\
		& +F_{1}(x,t)+F_{2}(x,t)+F(x,t)+N(h,x,t)-a(x,t)h\nonumber \\
		& =:AH+D(h,x,t).
	\end{align}
	and
	\begin{equation}
	H[0]=Af+ig\in L^{2}\left(\mathbb{R}^{3}\right).
	\end{equation}
	By Duhamel's formula, for fixed $T$ 
	\begin{equation}
	H[T]=e^{iTA}H[0]-i\int_{0}^{T}e^{-i\left(T-s\right)A}\left(D(h,\cdot,s)\right)\,ds.
	\end{equation}
	Applying the free evolution backwards, we obtain
	\begin{equation}
	e^{-iTA}H[T]=H[0]-i\int_{0}^{T}e^{isA}\left(D(h,\cdot,s)\right)\,ds.
	\end{equation}
	Letting $T$ go to $\infty$, we define 
	\begin{equation}
	H_{0}[0]:=H[0]-i\int_{0}^{\infty}e^{isA}\left(D(h,\cdot,s)\right)\,ds
	\end{equation}
	By construction, we just need to show $H_{0}[0]$ is well-defined
	in $L^{2}$, then automatically, 
	\begin{equation}
	\left\Vert H[t]-e^{tJH_{F}}H_{0}[0]\right\Vert _{L^{2}}\rightarrow0.
	\end{equation}
	It suffices to show 
	\begin{equation}
	\int_{0}^{\infty}e^{isA}\left(D(h,\cdot,s)\right)\,ds\in L^{2}
	\end{equation}
	as $t\rightarrow\infty$.
	
	Recall that 
	\begin{align}
		D(h,\cdot,s) & =-\left(V_{1}(x)+5W_{1}^{4}(x)\right)h-\left(V_{2}\left(x-\vec{v}s\right)+5W_{2}^{4}\left(x-\vec{v}s\right)\right)h\nonumber \\
		& +F_{1}(x,s)+F_{2}(x,s)+F(x,s)+N(h,x,s)-a(x,s)h.
	\end{align}
	We also recall the precise expression of $N$: 
	\begin{align}
		N(h,x,t) & :=\left(10W_{1}^{3}(x)+30W_{1}^{2}(x)W_{2}\left(x-\vec{v}t\right)+30W_{1}(x)W_{2}^{2}\left(x-\vec{v}t\right)+10W_{2}^{3}(x-\vec{v}t)\right)h^{2}\nonumber \\
		& +\left(10W_{1}^{2}(x)+3W_{1}(x)W_{2}\left(x-\vec{v}t\right)+10W_{2}^{2}\left(x-\vec{v}t\right)\right)h^{3}\\
		& +\left(5W_{1}(x)+5W_{2}\left(x-\vec{v}t\right)\right)h^{4}.\nonumber 
	\end{align}
	Furthermore, we have
	\begin{equation}
	M_{1}(h,x,t):=\left(10W_{1}^{3}(x)+30W_{1}^{2}(x)W_{2}\left(x-\vec{v}t\right)+30W_{1}(x)W_{2}^{2}\left(x-\vec{v}t\right)+10W_{2}^{3}(x-\vec{v}t)\right)h^{2}
	\end{equation}
	and 
	\begin{equation}
	M_{1,1}(h,x,t)=10W_{1}^{3}(x)h^{2},\,M_{1,2}(h,x,t)=30W_{1}^{2}(x)W_{2}\left(x-\vec{v}t\right)h^{2}
	\end{equation}
	\begin{equation}
	M_{1,3}(h,x,t)=30W_{1}(x)W_{2}^{2}\left(x-\vec{v}t\right)h^{2},\,M_{1,4}(h,x,t)=10W_{2}^{3}(x-\vec{v}t)h^{2}.
	\end{equation}
	Also we use the notation:
	\begin{equation}
	M_{2}(h,x,t):=\left(10W_{1}^{2}(x)+3W_{1}(x)W_{2}\left(x-\vec{v}t\right)+10W_{2}^{2}\left(x-\vec{v}t\right)\right)h^{3},
	\end{equation}
	\begin{equation}
	M_{3}(h,x,t):=\left(5W_{1}(x)+5W_{2}\left(x-\vec{v}t\right)\right)h^{4}
	\end{equation}
	We estimate each piece separately. By the identical argument as Lemma
	\ref{lem:energyL}, we have 
	\begin{equation}
	\left\Vert \int_{0}^{\infty}e^{isA}\left(\left(V_{1}(x)+5W_{1}^{4}(x)\right)h\right)\,ds\right\Vert _{L^{2}}\lesssim\left\Vert h\right\Vert _{L_{x}^{\infty}L_{t}^{2}}\lesssim\left\Vert h\right\Vert _{S}.
	\end{equation}
	\begin{equation}
	\left\Vert \int_{0}^{\infty}e^{isA}\left(\left(V_{2}\left(x-\vec{v}s\right)+5W_{2}^{4}\left(x-\vec{v}s\right)\right)h\right)\,ds\right\Vert _{L^{2}}\lesssim\left\Vert h^{S}\right\Vert _{L_{x}^{\infty}L_{t}^{2}}\lesssim\left\Vert h\right\Vert _{S}.
	\end{equation}
	
	\begin{equation}
	\left\Vert \int_{0}^{\infty}e^{isA}\left(F_{1}(x,s)\right)\,ds\right\Vert _{L^{2}}\lesssim\left\Vert \left\langle x\right\rangle ^{\frac{1}{2}+\epsilon}F_{1}\right\Vert _{L_{t,x}^{2}}
	\end{equation}
	\begin{equation}
	\left\Vert \int_{0}^{\infty}e^{isA}\left(F_{2}(x,s)\right)\,ds\right\Vert _{L^{2}}\lesssim\left\Vert \left\langle x\right\rangle ^{\frac{1}{2}+\epsilon}F_{2}^{S}\right\Vert _{L_{t,x}^{2}}
	\end{equation}
	\begin{equation}
	\left\Vert \int_{0}^{\infty}e^{isA}\left(M_{1,1}(h,x,t)\right)\,ds\right\Vert _{L^{2}}\lesssim\left\Vert h\right\Vert _{L_{x}^{\infty}L_{t}^{2}}\left\Vert h\right\Vert _{D}\lesssim\left\Vert h\right\Vert _{S}^{2}
	\end{equation}
	\begin{equation}
	\left\Vert \int_{0}^{\infty}e^{isA}\left(M_{1,4}(h,x,t)\right)\,ds\right\Vert _{L^{2}}\lesssim\left\Vert h^{S}\right\Vert _{L_{x}^{\infty}L_{t}^{2}}\left\Vert h^{S}\right\Vert _{D}\lesssim\left\Vert h\right\Vert _{S}^{2}
	\end{equation}
	And by trivial energy estimate for the free evolution:
	\begin{equation}
	\left\Vert \int_{0}^{\infty}e^{isA}\left(F(x,s)\right)\,ds\right\Vert _{L^{2}}\lesssim\left\Vert F\right\Vert _{L_{t}^{1}L_{x}^{2}}
	\end{equation}
	\begin{equation}
	\left\Vert \int_{0}^{\infty}e^{isA}\left(h^{5}\right)\,ds\right\Vert _{L^{2}}\lesssim\left\Vert h\right\Vert _{L_{t}^{5}L_{x}^{10}}^{5}\lesssim\left\Vert h\right\Vert _{S}^{5}
	\end{equation}
	\begin{equation}
	\left\Vert \int_{0}^{\infty}e^{isA}\left(a(x,s)h\right)\,ds\right\Vert _{L^{2}}\lesssim\left\Vert a(x,t)h\right\Vert _{L_{t}^{1}L_{x}^{2}}
	\end{equation}
	\begin{equation}
	\left\Vert \int_{0}^{\infty}e^{isA}\left(M_{2}(h,x,t)\right)\,ds\right\Vert _{L^{2}}\lesssim\left\Vert M_{2}(h,x,t)\right\Vert _{L_{t}^{1}L_{x}^{2}}
	\end{equation}
	\begin{equation}
	\left\Vert \int_{0}^{\infty}e^{isA}\left(M_{3}(h,x,t)\right)\,ds\right\Vert _{L^{2}}\lesssim\left\Vert M_{3}(h,x,t)\right\Vert _{L_{t}^{1}L_{x}^{2}}
	\end{equation}
	\begin{equation}
	\left\Vert \int_{0}^{\infty}e^{isA}\left(M_{1,2}(h,x,t)\right)\,ds\right\Vert _{L^{2}}\lesssim\left\Vert M_{1,2}(h,x,t)\right\Vert _{L_{t}^{1}L_{x}^{2}}
	\end{equation}
	\begin{equation}
	\left\Vert \int_{0}^{\infty}e^{isA}\left(M_{1,3}(h,x,t)\right)\,ds\right\Vert _{L^{2}}\lesssim\left\Vert M_{1,3}(h,x,t)\right\Vert _{L_{t}^{1}L_{x}^{2}}
	\end{equation}
	Applying H\"older's inequality and Strichartz estimates, we can estimate
	\begin{equation}
	\left\Vert a(x,t)h\right\Vert _{L_{t}^{1}L_{x}^{2}}\lesssim\left\Vert a\right\Vert _{L_{t}^{\frac{5}{4}}L_{x}^{\frac{5}{2}}}\left\Vert h\right\Vert _{L_{t}^{5}L_{x}^{10}}\lesssim\left\Vert h\right\Vert _{S},
	\end{equation}
	\begin{equation}
	\left\Vert M_{2}(h,x,t)\right\Vert _{L_{t}^{1}L_{x}^{2}}\lesssim\left\Vert h\right\Vert _{L_{t}^{3}L_{x}^{18}}\lesssim\left\Vert h\right\Vert _{S},
	\end{equation}
	\begin{equation}
	\left\Vert M_{3}(h,x,t)\right\Vert _{L_{t}^{1}L_{x}^{2}}\lesssim\left\Vert h\right\Vert _{L_{t}^{4}L_{x}^{12}}\lesssim\left\Vert h\right\Vert _{S},
	\end{equation}
	and 
	\begin{equation}
	\left\Vert M_{1,2}(h,x,t)\right\Vert _{L_{t}^{1}L_{x}^{2}},\,\left\Vert M_{1,3}(h,x,t)\right\Vert _{L_{t}^{1}L_{x}^{2}}\lesssim\left\Vert h\right\Vert _{L_{t}^{4}L_{x}^{12}}\lesssim\left\Vert h\right\Vert _{S}.
	\end{equation}
	Therefore, 
	\begin{align}
		\left\Vert \int_{0}^{\infty}e^{isA}\left(D(h,\cdot,s)\right)\,ds\right\Vert _{L^{2}} & \lesssim\left\Vert \left\langle x\right\rangle ^{\frac{1}{2}+\epsilon}F_{1}\right\Vert _{L_{t,x}^{2}}+\left\Vert \left\langle x\right\rangle ^{\frac{1}{2}+\epsilon}F_{2}^{S}\right\Vert _{L_{t,x}^{2}}\nonumber \\
		& +\left\Vert F\right\Vert _{L_{t}^{1}L_{x}^{2}}+\left\Vert h\right\Vert _{S}+\left\Vert h\right\Vert _{S}^{5}.
	\end{align}
	And hence 
	\begin{equation}
	H_{0}[0]:=H[0]-i\int_{0}^{\infty}e^{isA}\left(D(h,\cdot,s)\right)\,ds\in L^{2}.
	\end{equation}
	We are done.
\end{proof}

\end{document}